\newtheorem{theorem}{Theorem}[section]
\newtheorem{lemma}[theorem]{Lemma}
\newtheorem{proposition}[theorem]{Proposition}
\newtheorem{corollary}[theorem]{Corollary}
\theoremstyle{definition}
\newtheorem{definition}[theorem]{Definition}
\newtheorem{example}[theorem]{Example}
\theoremstyle{remark}
\numberwithin{equation}{section}
\def\qand{\quad\text{and}\quad}
\DeclareMathOperator{\op}{op}
\DeclareMathOperator{\soc}{soc}
\DeclareMathOperator{\Hom}{Hom}
\DeclareMathOperator{\Sym}{Sym}
\DeclareMathOperator{\QSym}{QSym}
\DeclareMathOperator{\NSym}{\mathbf{NSym}}
\let\top\relax \DeclareMathOperator{\top}{top}
\def\bS{\mathbf{S}}
\DeclareMathOperator{\Irr}{Irr}  
 \def\CC{{\mathbb C}} \def\FF{{\mathbb F}}
\def\H{\mathcal{H}}  \def\SS{\mathfrak{S}}
\def\bP{\mathbf{P}} \def\bC{\mathbf{C}} 
\def\bM{\mathbf{M}} \def\bN{\mathbf{N}} \def\bQ{\mathbf{Q}}
\def\bq{\mathbf{q}}
\def\htimes{\,\widehat\otimes\,}
\def\pib{\overline{\pi}}  \def\rad{\operatorname{rad}}
\def\J{\mathcal{J}}
\begin{document}

\title{Hecke algebras of simply-laced type with independent parameters}
\author{Jia Huang}
\address{Department of Mathematics and Statistics, University of Nebraska at Kearney, Kearney, Nebraska, USA}
\curraddr{}
\email{huangj2@unk.edu}
\thanks{
}
\subjclass[2010]{16G30, 05E10}

\keywords{Hecke algebra, independent parameters, simply-laced Coxeter system, induction and restriction, duality}

\maketitle

\begin{abstract}
We study the (complex) Hecke algebra $\mathcal{H}_S(\mathbf{q})$ of a finite simply-laced Coxeter system $(W,S)$ with independent parameters $\mathbf{q} \in \left( \mathbb{C} \setminus\{\text{roots of unity}\} \right)^S$. We construct its irreducible representations and projective indecomposable representations. We obtain the quiver of this algebra and determine when it is of finite representation type. We provide decomposition formulas for induced and restricted representations between the algebra $\mathcal{H}_S(\mathbf{q})$ and the algebra $\mathcal{H}_R(\mathbf{q}|_R)$ with $R\subseteq S$. Our results demonstrate an interesting combination of the representation theory of finite Coxeter groups and their 0-Hecke algebras, including a two-sided duality between the induced and restricted representations.
\end{abstract}

\section{Introduction}

Let $W:= \left\langle\, S:(st)^{m_{st}}=1,\ \forall s,t\in S\,\right\rangle$ be a Coxeter group generated by a finite set $S$ with relations $(st)^{m_{st}}=1$ for all $s,t\in S$, where $m_{ss}=1$ for all $s\in S$ and $m_{st}=m_{ts}\in\{2,3,\ldots\}\cup\{\infty\}$ for all distinct $s,t\in S$. 
Given a parameter $q$ in a field $\FF$, the \emph{(Iwahori-)Hecke algebra} $\H_S(q)$ of the Coxeter system $(W,S)$ is the (unital associative) algebra over $\FF$ generated by $\{T_s:s\in S\}$ with 
\begin{itemize}
\item
quadratic relations $(T_s-1)(T_s+q)=0$ for all $s\in S$, and
\item
braid relations $(T_sT_tT_s\cdots)_{m_{st}} = (T_tT_sT_t\cdots)_{m_{st}}$ for all $s,t\in S$.
\end{itemize}
Here $(aba\cdots)_m$ denotes an alternating product of $m$ terms.
The Hecke algebra $\H_S(q)$ is a one-parameter deformation of the group algebra $\FF W$ of $W$.
It has an $\FF$-basis $\{T_w:w\in W\}$ indexed by $W$, where $T_w:=T_{s_1}\cdots T_{s_\ell}$ if $w=s_1\cdots s_\ell$ is a reduced (i.e., shortest) expression in the generators of $W$.
The Hecke algebra $\H_S(q)$ naturally arises in different ways and has significance in many areas (see, e.g., Lusztig~\cite{Lusztig03}).

Tits showed that, if $W$ is finite, $\FF=\CC$ is the field of complex numbers, and $q\in\CC$ is neither zero nor a root of unity, then the Hecke algebra $\H_S(q)$ is semisimple and isomorphic to the group algebra $\CC W$.
The representation theory of Hecke algebras at roots of unity has been studied to some extent, with connections to other topics found (see Geck and Jacon~\cite{HeckeRoot}), but has not been completely determined yet even in type $A$ (see Goodman and Wenzl~\cite{HeckeRootA}).

Another interesting specialization of $\H_S(q)$ is the \emph{$0$-Hecke algebra} $\H_S(0)$, which is different from but closely related to the group algebra of $W$.
It was used by Stembridge~\cite{Stembridge} to give a short derivation for the M\"obius function of the Bruhat order of the Coxeter group $W$ and its parabolic quotients. 

For a finite Coxeter system $(W,S)$, Norton~\cite{Norton} studied the representation theory of $\H_S(0)$ over an arbitrary field $\FF$ using the triangularity of the product in $\H_S(0)$.
Her main result is a decomposition of $\H_S(0)$ into a direct sum of $2^{|S|}$ many indecomposable submodules; 
this decomposition is similar to the decomposition of the group algebra of $W$ (over the field of rational numbers) by Solomon~\cite{Solomon}.
Norton's result provided motivations to later work of Denton, Hivert, Schilling, and Thi\'ery~\cite{J} on the representation theory of finite \emph{$J$-trivial monoids}, as $\H_S(0)$ is a monoid algebra of the \emph{0-Hecke monoid} $\left\{ T_w|_{q=0} : w\in W \right\}$, an example of $J$-trivial monoids.

In type $A$, Krob and Thibon~\cite{KrobThibon} discovered an important correspondence from representations of 0-Hecke algebras to \emph{quasisymmetric functions and noncommutative symmetric functions}.
This correspondence is analogous to the classical Frobenius correspondence from complex representations of symmetric groups to symmetric functions.
Duchamp, Hivert, and Thibon~\cite{NCSF-VI} constructed the quiver of the $0$-Hecke algebra $H_n(0)$ of type $A_{n-1}$ and showed that $H_n(0)$ is of infinite representation type for $n\ge4$.
Tewari and van Willigenburg~\cite{TvW15,TvW19} and K\"onig~\cite{Konig} studied connections between $H_n(0)$ and a new basis of quasisymmetric functions called the \emph{quasisymmetric Schur functions}. 
After further investigation of combinatorial aspects of the representation theory of $H_n(0)$~\cite{H0CF,H0SR} using the correspondence of Krob and Thibon, we recently extended this correspondence from type $A$ to type $B$ and type $D$~\cite{H0Tab,H0Ch}.

Motivated by the similarities and differences between various specializations of the Hecke algebra $H_S(q)$, we generalized its definition from a single parameter $q$ to multiple independent parameters and studied the resulting algebra in recent work~\cite{Hecke}.
The \emph{Hecke algebra $\H_S(\mathbf q)$ of a Coxeter system $(W,S)$ with independent parameters $\bq=(q_s\in\FF:s\in S)\in\FF^S$} is the $\FF$-algebra generated by $\{T_s:s\in S\}$ with 
\begin{itemize}
\item
quadratic relations $(T_s-1)(T_s+q_s)=0$ for all $s\in S$, and
\item
braid relations $(T_sT_tT_s\cdots)_{m_{st}} = (T_tT_sT_t\cdots)_{m_{st}}$ for all $s,t\in S$.
\end{itemize}
We constructed a basis for $\H_S(\bq)$ when $(W,S)$ is simply laced and characterized when $\H_S(\bq)$ is commutative.
In type $A$, a commutative Hecke algebra $\H_S(\bq)$ has its dimension given by a Fibonacci number and its representation theory has interesting features analogous to the representation theory of both symmetric groups and their $0$-Hecke algebras.

In this paper we investigate the representation theory of the (not necessarily commutative) algebra $\H_S(\bq)$ when $(W,S)$ is a simply-laced Coxeter system. 
Our result shows an interesting combination of the representation theory of Coxeter groups and $0$-Hecke algebras, but there are certain features of the representation theory of $\H_S(\bq)$ that are unlike both symmetric groups and $0$-Hecke algebras.
For example, restrictions of projective $\H_S(\bq)$-modules are not projective in general.

We do not consider roots of unity here, but allowing these parameters would still give interesting algebras whose representation theory is yet to be determined.
Although we focus on simply-laced Coxeter systems, some of the preliminary results in Section~\ref{sec:prelim} are valid for all finite Coxeter systems, and it may be possible to extend our results to non-simply-laced Coxeter systems.

This paper is structured as follows. 
In Section~\ref{sec:prelim} we review the representation theory of finite dimensional algebras, finite Coxeter groups, and $0$-Hecke algebras, and also develop some basic results for later use.
Next, in Section~\ref{sec:dim} we study the structure of the algebra $\H_S(\bq)$ and give a formula for its dimension.
In Section~\ref{sec:decomp} we construct the projective indecomposable $\H_S(\bq)$-modules and simple $\H_S(\bq)$-modules, and determine the Cartan matrix and (ordinary) quiver of $\H_S(\bq)$.
In Section~\ref{sec:IndRes} we obtain formulas for induction and restriction of representations between $\H_R(\bq)$ and $\H_S(\bq|_R)$ for $R\subseteq S$, and verify a two-sided duality between induction and restriction.
Lastly, we give some remarks and questions in Section~\ref{sec:remark}.



\section{Preliminaries}\label{sec:prelim}

\subsection{Representations of algebras}
We first review some general results on representations of algebras; see references~\cite{ASS,CurtisReiner,Etingof} for more details.
All algebras and modules in this paper are \emph{finite dimensional}.

Let $A$ be an (unital associative) algebra over a field $\FF$. 
We say $A$ is of \emph{finite [or infinite, resp.] representation type} if the number of non-isomorphic indecomposable $A$-modules is finite [or infinite, resp.].
Let $M$ be a (left) $A$-module. 
The \emph{radical} $\rad(M)=\rad_A(M)$ of $M$ is the intersection of all maximal $A$-submodules of $M$.
We have $\rad(M)=\rad(A)M$, where $A$ is treated as an $A$-module. 
Let $\rad^{i+1}(M):=\rad(\rad^i(M))$ for $i=1,2,\ldots$.
The \emph{top} of $M$ is $\top(M)=\top_A(M) := M/\rad_A(M)$, which is the largest semisimple quotient of $M$.
The \emph{socle} $\soc(M)=\soc_A(M)$ of $M$ is the sum of all simple submodules of $M$, which is the largest semisimple submodule of $M$.

There exists a direct sum decomposition $A=\bP_1\oplus\cdots\oplus\bP_k$ where $\bP_1,\ldots,\bP_k$ are indecomposable $A$-submodules.
For $i=1,2,\ldots,k$, the radical $\rad(\bP_i)$ is the unique maximal $A$-submodule of $\bP_i$ and $\bC_i:=\top(\bP_i)$ is simple~\cite[Proposition~I.4.5~(c)]{ASS}.
Moreover, every projective indecomposable $A$-module is isomorphic to $\bP_i$ for some $i$ and every simple $A$-module is isomorphic to $\bC_j$ for some $j$.

The algebra $A$ is \emph{basic} if $\bP_1,\ldots,\bP_k$ are pairwise non-isomorphic.
In general, we may assume, without loss of generality, that $\{\bP_1,\ldots,\bP_r\}$ is a complete set of pairwise non-isomorphic projective $A$-modules and $\{\bC_1,\ldots,\bC_r\}$ is a complete set of pairwise non-isomorphic simple $A$-modules for some $r\le k$. 
The \emph{Cartan matrix} of $A$ is $[c_{ij}]_{i,j=1}^r$ where $c_{ij}:=\dim_\FF \Hom_A(\bP_i,\bP_j)$ is the multiplicity of the simple module $\bC_i=\top(\bP_i)$ among the composition factors of the projective indecomposable module $\bP_j$.

The \emph{Grothendieck groups} $G_0(A)$ and $K_0(A)$ of $A$ are free abelian groups with bases $\{\bC_1,\ldots,\bC_r\}$ and $\{\bP_1,\ldots,\bP_r\}$, respectively.
If $0\to L\to M\to N\to 0$ is a short exact sequence of $A$-modules [or projective $A$-modules, resp.], then $M$ is identified with $L+N$ in $G_0(A)$ [or $K_0(A)$, resp.].
If $A$ is a semisimple algebra then $G_0(A)=K_0(A)$.

If $M$ and $N$ are two $A$-modules then define $\langle M,N\rangle:=\dim_\FF \Hom_A(M,N)$.
Since $\langle \bC_i,\bC_j\rangle =  \delta_{i,j}$ for all $i$ and $j$ by Schur's Lemma and since $f(\rad(M))\subseteq \rad(N)$ for any $f\in\Hom_A(M,N)$, we have
\begin{equation}\label{eq:hom}
\langle M,\bC_j\rangle = \langle \bC_i,\bC_j\rangle =  \delta_{i,j} \quad 
\text{if $\top(M)\cong \bC_i$}
\end{equation}
where  $\delta$ is the Kronecker delta.
Taking $M=\bP_i$ gives the duality between $G_0(A)$ and $K_0(A)$. 

We next provide some basic results on representations of algebras for later use.

\begin{proposition}\label{prop:tensor}
Let $A$ and $B$ be two algebras. Let $M$ be an $A$-module and $N$ a $B$-module. Then the following statements hold for the $A\otimes B$-module $M\otimes N$.
\begin{itemize}
\item[(i)]
We have $\rad(M\otimes N) \cong \rad(M)\otimes N + M \otimes\rad(N)$ and $\top(M\otimes N) \cong \top(M) \otimes \top(N)$.
\item[(ii)]
If $M$ and $N$ are both simple then $M\otimes N$ is also simple. 
Conversely, any simple $A\otimes B$-module can be written as $M\otimes N$ for a unique $A$-module $M$ and a unique $B$-module $N$.
\item[(iii)]
We have $\rad(M\otimes N)/\rad^2(M\otimes N) = \left( \rad(M)/\rad^2(M) \right) \otimes \top(N) + \top(M) \otimes \left( \rad(N) / \rad^2 (N) \right)$.
\end{itemize}
\end{proposition}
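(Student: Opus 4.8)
The plan is to reduce all three parts to the single identity
\[
\rad(A\otimes B)=\rad(A)\otimes B+A\otimes\rad(B),
\]
which is the engine behind everything. Since the ground field is $\CC$ (algebraically closed), the quotient of $A\otimes B$ by the right-hand side is isomorphic to $(A/\rad A)\otimes(B/\rad B)$, a tensor product of semisimple algebras, hence again semisimple; as $\rad(A)\otimes B+A\otimes\rad(B)$ is a nilpotent ideal, it must then equal $\rad(A\otimes B)$. I would either cite this standard fact from \cite{CurtisReiner,Etingof} or include the short argument above. For part~(i) I then start from $\rad(M)=\rad(A)M$, recalled in the text, so that $\rad(M\otimes N)=\rad(A\otimes B)(M\otimes N)$; feeding in the engine identity and using $AM=M$, $BN=N$ yields
\[
\rad(M\otimes N)=\rad(A)M\otimes N+M\otimes\rad(B)N=\rad(M)\otimes N+M\otimes\rad(N).
\]
The statement about $\top$ then follows from the elementary quotient formula $(M\otimes N)/(U\otimes N+M\otimes V)\cong(M/U)\otimes(N/V)$ for subspaces $U\subseteq M$, $V\subseteq N$, applied with $U=\rad(M)$ and $V=\rad(N)$.

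For part~(ii), if $M$ and $N$ are simple then $\rad(M)=\rad(N)=0$, so part~(i) gives $\rad(M\otimes N)=0$, i.e.\ $M\otimes N$ is semisimple. To upgrade this to simplicity I would use the natural isomorphism $\operatorname{End}_{A\otimes B}(M\otimes N)\cong\operatorname{End}_A(M)\otimes\operatorname{End}_B(N)$, which over $\CC$ equals $\CC\otimes\CC=\CC$ by Schur's Lemma; a nonzero semisimple module whose endomorphism algebra is a field is necessarily simple. For the converse and uniqueness I would pass to the semisimple quotient: the simple $A\otimes B$-modules are exactly the simple modules of $(A/\rad A)\otimes(B/\rad B)$, and by Artin--Wedderburn the simple modules of a tensor product of matrix algebras over $\CC$ are precisely the tensor products of simple modules of the two factors, with the factors recovered from (hence determined by) the module.

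Part~(iii) is the most computational. Iterating the engine identity and using that $\rad(A),\rad(B)$ are ideals, I first compute
\[
\rad(A\otimes B)^2=\rad(A)^2\otimes B+\rad(A)\otimes\rad(B)+A\otimes\rad(B)^2,
\]
and then, via $\rad^2(M\otimes N)=\rad(A\otimes B)^2(M\otimes N)$, obtain $\rad^2(M\otimes N)=\rad^2(M)\otimes N+\rad(M)\otimes\rad(N)+M\otimes\rad^2(N)$. The crux is the linear-algebra fact that, for subspaces, $(\rad(M)\otimes N)\cap(M\otimes\rad(N))=\rad(M)\otimes\rad(N)$, and this intersection sits inside $\rad^2(M\otimes N)$. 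Consequently, modulo $\rad^2(M\otimes N)$ the two summands $\rad(M)\otimes N$ and $M\otimes\rad(N)$ of $\rad(M\otimes N)$ meet in zero, so the quotient splits as the direct sum $\bigl(\rad(M)/\rad^2(M)\bigr)\otimes\top(N)\oplus\top(M)\otimes\bigl(\rad(N)/\rad^2(N)\bigr)$; the image of each summand is identified using the same quotient formula as in part~(i).

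The main obstacle I anticipate is not any single step but the reliance on algebraic closedness: both the engine identity and the converse in part~(ii) can fail over a general field, so the cleanest exposition fixes $\FF=\CC$ and isolates ``tensor product of semisimple algebras is semisimple'' as the one nontrivial input. The remaining difficulty is purely the bookkeeping with intersections and sums of tensor-product subspaces in part~(iii), where one must keep the submodule equalities (not merely isomorphisms) straight in order to conclude that the sum is direct.
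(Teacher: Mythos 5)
Your proposal is correct and follows essentially the same route as the paper, which simply cites the standard result on tensor products of modules over algebras (\cite[Theorem 2.26]{Etingof}) for parts (i) and (ii) and obtains (iii) by applying (i) twice to get $\rad^2(M\otimes N)=\rad^2(M)\otimes N+\rad(M)\otimes\rad(N)+M\otimes\rad^2(N)$. You have merely filled in the details of the cited result via the radical identity $\rad(A\otimes B)=\rad(A)\otimes B+A\otimes\rad(B)$, and you correctly isolate algebraic closedness of the ground field as the hypothesis that makes it work.
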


\begin{proof}
Part (i) and Part (ii) follow from a standard result~\cite[Theorem 2.26]{Etingof} and its proof.
Applying (i) gives $\rad^2(M\otimes N) = \rad^2(M) \otimes N + \rad(M) \otimes \rad(N) + M \otimes \rad^2(N)$, which implies (iii).
\end{proof}


\begin{proposition}\label{prop:TensorP}
Let $A = \bP_1 \oplus\cdots\oplus \bP_k$ and $A' = \bP'_1 \oplus \cdots\oplus \bP'_\ell$ be direct sum decompositions of two algebras $A$ and $A'$ into indecomposable submodules. Then 
\[ A\otimes A' = \bigoplus_{i=1}^k \bigoplus_{j=1}^{\ell } \left( \bP_i\otimes\bP'_j \right) \]
where each summand $\bP_i\otimes\bP'_j$ is an indecomposable $A\otimes A'$-module with $\top(\bP_i\otimes\bP'_j) \cong \top(\bP_i) \otimes \top(\bP'_j)$.
\end{proposition}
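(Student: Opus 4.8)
The plan is to deduce everything from the decomposition of the regular representation together with Proposition~\ref{prop:tensor}, so that the proposition becomes essentially an assembly of facts already in hand. First I would observe that the displayed equality is purely formal. Since tensor product distributes over direct sums, and since $A$ decomposes as the left $A$-module $\bigoplus_i \bP_i$ while $A'$ decomposes as the left $A'$-module $\bigoplus_j \bP'_j$, the algebra $A \otimes A'$ regarded as a left module over itself satisfies
\[ A \otimes A' = \Bigl(\bigoplus_{i=1}^k \bP_i\Bigr) \otimes \Bigl(\bigoplus_{j=1}^\ell \bP'_j\Bigr) = \bigoplus_{i=1}^k \bigoplus_{j=1}^\ell \bigl(\bP_i \otimes \bP'_j\bigr), \]
where each $\bP_i \otimes \bP'_j$ carries the natural $A \otimes A'$-action. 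Thus the only substantive claims are that each summand is indecomposable and that its top is as asserted.

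The top is immediate. By the remarks preceding the statement, $\top(\bP_i)$ and $\top(\bP'_j)$ are simple; Proposition~\ref{prop:tensor}(i) then gives $\top(\bP_i \otimes \bP'_j) \cong \top(\bP_i) \otimes \top(\bP'_j)$, and Proposition~\ref{prop:tensor}(ii) shows this tensor product of two simple modules is again simple.

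For indecomposability I would invoke the general principle that a nonzero module with simple top cannot decompose, and this is where the only genuine care is needed. If $\bP_i \otimes \bP'_j = M_1 \oplus M_2$ with both summands nonzero, then taking tops commutes with direct sums (as $\rad$ does), so $\top(\bP_i \otimes \bP'_j) \cong \top(M_1) \oplus \top(M_2)$. Because $\rad(N) = \rad(A \otimes A')\,N$ is a proper submodule of every nonzero finite-dimensional module $N$ by Nakayama's lemma, both $\top(M_1)$ and $\top(M_2)$ are nonzero, contradicting the simplicity just established. Hence each $\bP_i \otimes \bP'_j$ is indecomposable. I do not anticipate any real obstacle here, since the hard work lives in Proposition~\ref{prop:tensor}; the step ``simple top implies indecomposable'' is the one point to state carefully, as it uses the standing finite-dimensionality hypothesis through Nakayama's lemma to guarantee $\rad(N) \subsetneq N$ for $N \neq 0$.
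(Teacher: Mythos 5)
Your proof is correct and follows essentially the same route as the paper: distributivity of tensor products over direct sums, then Proposition~\ref{prop:tensor} to identify the top as a simple module, then ``simple top implies indecomposable.'' The only difference is that you spell out the last implication (via Nakayama's lemma and the compatibility of $\top$ with direct sums), which the paper leaves implicit.
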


\begin{proof}
By the distributivity of tensor product over direct sum, $A\otimes A'$ is a direct sum of $\bP_i\otimes \bP'_j$ for  all $i=1,\ldots,k$ and $j=1,\ldots,\ell$.
By Proposition~\ref{prop:tensor}, $\top(\bP_i\otimes\bP'_j) = \top(\bP_i) \otimes \top(\bP_j)$ is a simple $A\otimes A'$-module.
Hence $\bP_i\otimes\bP'_j$ must be indecomposable.
\end{proof}

Now suppose that there is an algebra homomorphism $\phi:A\to B$.
Any $B$-module $M$ becomes an $A$-module by $am:=\phi(a)m$, $\forall a\in A$, $\forall m\in M$.
We call this $A$-module the \emph{restriction} of $M$ from $B$ to $A$ and denote it by $M\downarrow\,_A^B$.
The \emph{induction} of an $A$-module $N$ from $A$ to $B$ is the $B$-module $N\uparrow\,_A^B := B\otimes_A N$, where $B=\,\!_BB_A$ is regarded as a $(B,A)$-bimodule.

\begin{proposition}\label{prop:induce}
Suppose that $\phi:A\to B$ is an algebra homomorphism and $M$ is a $B$-module.
\begin{itemize}
\item[(i)]
If $M\downarrow\,_A^B$ is a simple [or indecomposable, resp.] $A$-module then $M$ is a simple [or indecomposable, resp.] $B$-module. 
\item[(ii)]
If $M$ is projective indecomposable both as an $A$-module and as a $B$-module, and if $\rad_A(M)$ is a $B$-submodule of $M$, then $\rad_A(M) = \rad_B(M)$.
\end{itemize}
\end{proposition}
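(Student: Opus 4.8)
The plan is to exploit the single elementary observation that, because the $A$-action on any $B$-module factors through $\phi$, every $B$-submodule of a $B$-module is automatically an $A$-submodule of its restriction, and every decomposition into a direct sum of $B$-modules is simultaneously a decomposition into $A$-modules. This immediately settles part (i). If $M\downarrow\,_A^B$ is simple, then its only $A$-submodules are $0$ and $M$; since each $B$-submodule of $M$ is among these $A$-submodules, the only $B$-submodules are $0$ and $M$, so $M$ is simple. Likewise, if $M\downarrow\,_A^B$ is indecomposable, then $M$ admits no nontrivial decomposition as a $B$-module, since any such decomposition would restrict to a nontrivial decomposition of $M\downarrow\,_A^B$.

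For part (ii) the key is to recognize $\rad_A(M)$ as a maximal $B$-submodule and then to invoke uniqueness. First I would note that, since $M$ is projective indecomposable as an $A$-module, $\rad_A(M)$ is the unique maximal $A$-submodule of $M$ and $\top_A(M)=M/\rad_A(M)$ is a simple $A$-module, by the property of projective indecomposable modules recalled in the discussion above. By hypothesis $\rad_A(M)$ is a $B$-submodule, so the quotient $M/\rad_A(M)$ is a $B$-module whose restriction to $A$ is exactly the simple $A$-module $\top_A(M)$. Applying part (i) to this quotient then shows that $M/\rad_A(M)$ is simple as a $B$-module, which is precisely the statement that $\rad_A(M)$ is a maximal $B$-submodule of $M$.

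To finish, I would use that $M$ is projective indecomposable also as a $B$-module, so $\rad_B(M)$ is the \emph{unique} maximal $B$-submodule of $M$. Since $\rad_A(M)$ is a maximal $B$-submodule, uniqueness forces $\rad_A(M)=\rad_B(M)$, as claimed.

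I do not expect a genuine obstacle: both parts are formal consequences of how restriction interacts with the submodule and decomposition lattices, combined with the defining property of projective indecomposables. The only point demanding a little care is the bookkeeping in part (ii), namely checking that the $A$-module structure on $M/\rad_A(M)$ obtained by restricting its $B$-module structure really coincides with $\top_A(M)$, so that part (i) applies verbatim; this is immediate once one verifies that the two actions agree, but it is the hinge on which the whole argument turns.
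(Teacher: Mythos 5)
Your proof is correct and follows essentially the same route as the paper: part (i) is the same observation that every $B$-submodule (or $B$-direct-sum decomposition) of $M$ restricts to an $A$-submodule (or $A$-decomposition), and part (ii) rests on the same key fact that the radical of a projective indecomposable module is its unique maximal submodule. The only cosmetic difference is in (ii): the paper argues by two containments ($\rad_B(M)$ is a proper $A$-submodule, hence lies in the unique maximal $A$-submodule $\rad_A(M)$, and conversely $\rad_A(M)$ is a proper $B$-submodule, hence lies in $\rad_B(M)$), whereas you show that $\rad_A(M)$ is itself a maximal $B$-submodule by applying part (i) to the quotient $M/\rad_A(M)$ and then invoke the uniqueness of the maximal $B$-submodule once; both arguments are equally valid.
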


\begin{proof}
Any $B$-submodule of $M$ restricts to an $A$-module.
This implies (i).

If $M$ is a projective indecomposable $A$-module then $\rad_A(M)$ is the unique maximal $A$-submodule of $M$~\cite[Proposition~I.4.5~(c)]{ASS} and the same result holds for $\rad_B(M)$ if $M$ is a projective $B$-module.
Since $\rad_B(M)$ restricts to a proper $A$-submodule of $M$, it is contained in $\rad_A(M)$.
On the other hand, if $\rad_A(M)$ is a $B$-module then it is contained in $\rad_B(M)$.
Thus (ii) holds.
\end{proof}

The proof of the following proposition is left to the reader as an exercise.
\begin{proposition}\label{prop:surj}
Let $A\twoheadrightarrow B$ be a surjection of algebras.
\begin{enumerate}
\item[(i)]
A $B$-module is simple (or indecomposable) if and only if its restriction to $A$ is simple (or indecomposable).
\item[(ii)]
Two $B$-modules are isomorphic if and only if their restrictions to $A$ are isomorphic.
\item[(iii)]
If $A$ is of finite representation type then so is $B$.
\end{enumerate}
\end{proposition}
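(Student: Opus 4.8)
The plan is to exploit the fact that, because $\phi\colon A\twoheadrightarrow B$ is \emph{surjective}, the restriction functor from $B$-modules to $A$-modules identifies the relevant submodule lattices and hom-spaces. Concretely, first I would establish one key observation: for any $B$-module $M$, an additive subgroup $N\subseteq M$ is an $A$-submodule of $M\downarrow\,_A^B$ if and only if it is a $B$-submodule of $M$. Indeed, $N$ being an $A$-submodule means $\phi(a)N\subseteq N$ for all $a\in A$, and since $\phi$ is onto this is exactly the condition $bN\subseteq N$ for all $b\in B$. By the same token, for $B$-modules $M$ and $M'$ any $A$-linear map $f\colon M\downarrow\,_A^B\to M'\downarrow\,_A^B$ is automatically $B$-linear, since $f(bm)=f(\phi(a)m)=\phi(a)f(m)=bf(m)$ once we write $b=\phi(a)$; hence $\Hom_A(M\downarrow\,_A^B,M'\downarrow\,_A^B)=\Hom_B(M,M')$. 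These two facts say that restriction is a fully faithful embedding of $B$-modules into $A$-modules preserving and reflecting the submodule lattice.

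With this in hand, each part is immediate. For (i), the lattice identification shows that $M$ has a proper nonzero $B$-submodule if and only if $M\downarrow\,_A^B$ has a proper nonzero $A$-submodule, which gives the simplicity statement; likewise, a decomposition $M=M_1\oplus M_2$ into nonzero $B$-submodules is the same data as a decomposition of $M\downarrow\,_A^B$ into nonzero $A$-submodules, which gives the indecomposability statement. Note that one direction here is already Proposition~\ref{prop:induce}(i); surjectivity is precisely what supplies the converse. For (ii), the equality of hom-spaces shows that a $B$-isomorphism $M\to M'$ is the same as an $A$-isomorphism $M\downarrow\,_A^B\to M'\downarrow\,_A^B$, since an invertible $A$-linear map between these restrictions is invertible as a $B$-linear map and conversely.

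Finally, for (iii) I would combine (i) and (ii): by (i) every indecomposable $B$-module restricts to an indecomposable $A$-module, and by (ii) non-isomorphic indecomposable $B$-modules restrict to non-isomorphic indecomposable $A$-modules. Thus restriction induces an injection from the set of isomorphism classes of indecomposable $B$-modules into the corresponding set for $A$, so if the latter is finite then so is the former, i.e.\ $B$ is of finite representation type.

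As for the main obstacle: there is essentially none beyond correctly invoking surjectivity—the entire content is the opening observation that $\phi$ being onto forces the $A$- and $B$-submodules (and the $A$- and $B$-linear maps) of a $B$-module to coincide, after which (i)--(iii) are formal. The only point demanding a little care is ensuring the indecomposability argument in (i) genuinely uses that direct summands are submodules on both sides, which the lattice identification guarantees.
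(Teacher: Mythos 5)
Your proof is correct, and since the paper explicitly leaves this proposition as an exercise, your argument via the key observation that surjectivity of $A\twoheadrightarrow B$ makes restriction identify the submodule lattices and the hom-spaces ($\Hom_A(M\downarrow\,_A^B,M'\downarrow\,_A^B)=\Hom_B(M,M')$) is precisely the intended one. All three parts then follow formally as you describe, with (iii) obtained from the injection that restriction induces on isomorphism classes of indecomposables.
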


Under certain circumstances, e.g., when $A$ and $B$ are group algebras over the complex field $\CC$, one has 
\begin{equation}\label{eq:dual1}
\Hom_B(N\uparrow\,_A^B,M) \cong \Hom_A(N,M\downarrow\,_A^B).
\end{equation}
This is known as the \emph{Frobenius reciprocity}. 
The other possible adjunction 
\begin{equation}\label{eq:dual2}
\Hom_B(N\downarrow\,_A^B,M) \cong \Hom_A(N,M\uparrow\,_A^B)
\end{equation}
holds for the (complex) group algebras of the symmetric groups and their $0$-Hecke algebras (over any field $\FF$), giving the duality between certain graded Hopf algebras (see Section~\ref{sec:TypeA}).

Next, recall that a \emph{quiver} $Q$ is a directed graph possibly with loops and multiple arrows between two vertices.
Its \emph{path algebra} $\CC Q$ has a basis consisting of all paths in $Q$ and has multiplication given by concatenation of paths.
The \emph{arrow ideal} $R_Q$ is the two-sided ideal of $\CC Q$ generated by all arrows in $Q$.
A \emph{representation} of $Q$ is a $\CC Q$-module.
Gabriel's theorem classifies connected quivers of finite representation type as type $A_n$, $D_n$, $E_6$, $E_7$, and $E_8$, meaning that these quivers do not contain oriented cycles and their underlying undirected graphs are given by Coxeter diagrams of the corresponding types (cf. Section~\ref{sec:Coxeter}).

Let $A$ be a finite dimensional $\CC$-algebra whose projective indecomposable modules are $\bP_1,\ldots,\bP_r$ and let $\bC_i:=\top(\bP_i)$ for all $i$. 
The \emph{(ordinary) quiver} $Q_A$ of $A$ is the direct graph with vertices $\bC_1,\ldots,\bC_r$ such that the number of arrows from $\bC_i$ to $\bC_j$ is the multiplicity of $\bC_j$ among the composition factors of $\rad(\bP_i)/\rad^2(\bP_i)$.
In particular, the quiver of a semisimple algebra $A$ consists of isolated vertices.

If $A$ is a basic algebra then there exists an ideal $I$ of the path algebra $\CC Q_A$ such that $A\cong \CC Q_A/I$ and $I\subseteq R^2$, where $R$ is the arrow ideal of $Q_A$~\cite[Theorem~II.3.7]{ASS}.
If $A$ is not basic then there is a basic algebra $A^b$ such that the categories of finitely generated modules over $A$ and $A^b$ are equivalent~\cite[Corollary~I.6.10]{ASS} and the quiver of $A$ is the same as the quiver of $A^b$ (cf. Li and Chen~\cite[Proposition~1.2]{LiChen}).

Assume $A_1$ and $A_2$ are two algebras whose quivers $Q_1$ and $Q_2$ are loopless.
The quiver of $A_1\otimes A_2$ is the \emph{tensor product} $Q_1\otimes Q_2$ of $Q_1$ and $Q_2$, a loopless quiver defined below: its vertex set is the Cartesian product of the vertex sets of $Q_1$ and $Q_2$, and the number of arrows from $(u_1,u_2)$ to $(v_1,v_2)$ is
\[ \begin{cases}
\text{the number of arrows from $u_1$ to $v_1$}, & \text{if $u_1\ne v_1$ and $u_2=v_2$},\\
\text{the number of arrows from $u_2$ to $v_2$}, & \text{if $u_1=v_1$ and $u_2\ne v_2$},\\
\text{zero}, & \text{otherwise}.
\end{cases}\]

\subsection{Coxeter groups and their representation theory}\label{sec:Coxeter}
We recall some basic definitions and results on Coxeter groups from Bj\"orner and Brenti~\cite{BjornerBrenti}.
A \emph{Coxeter group} is a group $W$ generated by a finite set $S$ with quadratic relations $s^2=1$ for all $s\in S$ and braid relations $(sts\cdots)_{m_{st}} = (tst\cdots)_{m_{st}}$ for all distinct $s,t\in S$, where $m_{st}=m_{ts}\in\{2,3,\ldots\}\cup\{\infty\}$ and $(aba\cdots)_m$ denotes an alternating product of $m$ terms.
The \emph{Coxeter system} $(W,S)$ can be encoded by an edge-labeled graph called the \emph{Coxeter diagram} of $(W,S)$; the vertex set of this graph is $S$ and there is an edge labeled $m_{st}$ between distinct vertices $s$ and $t$ whenever $m_{st}\ge 3$.
If $m_{st}\le 3$ for all distinct $s,t\in S$ then $(W,S)$ is \emph{simply laced}.
We say $(W,S)$ is \emph{finite} if $W$ is finite.
If the Coxeter diagram of $(W,S)$ is connected then $(W,S)$ is \emph{irreducible}. 
There is a well-known classification of finite irreducible Coxeter systems as type $A_n$, $B_n$, $D_n$, $I_2(m)$, $E_6$, $E_7$, $E_8$, $F_4$, $H_3$, $H_4$~\cite[Appendix A1]{BjornerBrenti}.

Let $(W,S)$ be a Coxeter system and let $w\in W$.
We say that $w=s_1\cdots s_k$ is a \emph{reduced expression} of $w$ if $s_1,\cdots,s_k\in S$ and $k$ is as small as possible; the smallest $k$ is the \emph{length} $\ell(w)$ of $w$.
The \emph{descent set} of $w$ is defined as $D(w) := \{w\in S: \ell(ws)<\ell(w)\}$ and its elements are called the \emph{descents} of $w$.
One has $s\in D(w)$ if and only if some reduced expression of $w$ ends with $s$.

Given $I\subseteq S$, the \emph{parabolic subgroup} $W_I$ of $W$ is generated by $I$. 
The pair $(W_I,I)$ is a Coxeter system whose Coxeter diagram has vertex set $I$ and has labeled edges $(s,t)$ of the Coxeter diagram of $(W,S)$ for all $s,t\in I$.
Each left coset of $W_I$ in $W$ has a unique minimal representative.
The set of all minimal representatives of left $W_I$-cosets is 
$W^I:=\{w\in W:D(w)\subseteq S\setminus I\}.$
Every element of $W$ can be written uniquely as $w=w^I\cdot\,_Iw$, where $w^I\in W^I$ and $_Iw\in W_I$;
this implies $\ell(w)=\ell(w^I)+\ell(_Iw)$.

Let $I\subseteq S$.
The \emph{descent class} of $I$ in $W$ is $\{w\in W: D(w)=I\}$.
When $W$ is finite, the descent class of $I$ is nonempty by Lusztig~\cite[Lemma~9.8]{Lusztig03} and becomes an interval $[w_0(I),w_1(I)]$ under the left weak order of $W$ by Bj\"orner and Wachs~\cite[Theorem~6.2]{BjornerWachs}.
Here $w_0(I)$ and $w_1(I)$ are the longest elements of $W_I$ and $W^{S\setminus I}$, respectively, and the \emph{left weak order} is a partial ordering on $W$ defined by setting $u\le_L w$ if there exists some reduced expression $w=s_1\cdots s_k$ such that $s_i\cdots s_k=u$ for some $i$. 

Another important partial order on $W$ is the \emph{Bruhat order}: given $u,w\in W$, define $u\le w$ if a reduced expression of $u$ is a subword of some (or equivalently, every) reduced expression of $w$.
When $W$ is finite, its longest element $w_0$ is the unique maximum element in Bruhat order and can be characterized by the property $\ell(sw_0)<\ell(w_0)$ for all $s\in S$~\cite[Prop.~2.3.1]{BjornerBrenti}.

An important example of a finite Coxeter group is the symmetric group $\SS_n$, and we will review its basic properties in Section~\ref{sec:TypeA}.
The representation theory of $\SS_n$ is well studied and can be extended to finite Coxeter groups of other types (see, e.g., Adin--Brenti--Roichman~\cite{ABR1,ABR2} and Humphreys~\cite[\S8.10]{Humphreys}).
With that in mind, we adopt some notation below for the complex representation theory of a finite group.

The group algebra $\CC G$ is semisimple and every $\CC G$-module is a direct sum of simple/irreducible $\CC G$-submodules.
There exists a complete list $\{\bS_\lambda: \lambda \in \Irr(\CC G) \}$ of pair-wise nonisomorphic simple $\CC G$-modules, where $\Irr(\CC G)$ is in bijection with the set of conjugacy classes of $G$.
By Schur's Lemma, the Cartan matrix of $\CC G$ is the identity matrix $[\delta_{\lambda,\mu}]_{\lambda,\mu\in\Irr(\CC G)}$, where $\delta$ is the Kronecker delta.
The span of $\sigma(G) := \sum_{g\in G} g$ is the \emph{trivial representation} of $G$, whose complement in $\CC G$ is spanned by the set
\begin{equation}\label{eq:perp}
\sigma(G)^\perp := \left\{ \sum_{g\in G} c_g g : c_g\in \CC, \ \sum_{g\in G} c_g = 0 \right\}. 
\end{equation}
The regular representation of $G$ has a decomposition
\begin{equation}\label{eq:DecompCG}
\CC G = \CC \sigma(G) \oplus \CC \sigma(G)^\perp \cong \bigoplus_{\lambda\in \Irr(\CC G)} (\bS_\lambda)^{\oplus d_\lambda}.
\end{equation}
Here $d_\lambda$ be the dimension of $\bS_\lambda$ for each $\lambda\in \Irr(\CC G)$; in particular, $d_\lambda=1$ if $\bS_\lambda \cong \CC\sigma(G)$ is trivial.

Let $H$ be a subgroup of $G$.
There exists an integer $c_\mu^\lambda\ge0$ for all $\lambda\in\Irr(\CC G)$ and $\mu\in\Irr(\CC H)$ such that
\begin{equation}\label{eq:IndResG}
\bS_\mu\uparrow\,_H^G \cong \bigoplus_{\lambda\in\Irr(\CC G)} \bS_\lambda^{c_\mu^\lambda} \qand
\bS_\lambda\downarrow\,_H^G \cong \bigoplus_{\mu\in\Irr(\CC H)} \bS_\mu^{c_\mu^\lambda}. 
\end{equation}
Thus the \emph{Frobenius Reciprocity} holds: if $\lambda\in\Irr(\CC G)$ and $\mu\in\Irr(\CC H)$ then
\[ \left\langle \bS_\lambda, \bS_\mu\uparrow\,_H^G \right\rangle 
= \left\langle \bS_\lambda\downarrow\,_H^G, \bS_\mu \right\rangle. \]

The above restriction formula~\eqref{eq:IndResG} implies the following lemma, which will be useful in Section~\ref{sec:IndRes}.

\begin{lemma}\label{lem:RestrictOne}
Let $H$ be a subgroup of $G$. 
If $\bS_\lambda$ is trivial, where $\lambda\in\Irr(\CC G)$, and $c_\mu^\lambda\ne0$ for some $\mu\in\Irr(\CC H)$, then $\bS_\mu$ is also trivial.
\end{lemma}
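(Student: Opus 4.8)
The plan is to exploit the elementary fact that the restriction of the trivial representation remains trivial, combined with the restriction formula in \eqref{eq:IndResG}. First I would observe that, since $\bS_\lambda$ is the trivial representation of $G$ and $H$ is a subgroup of $G$, every element of $H$ acts as the identity on $\bS_\lambda$; hence the restricted module $\bS_\lambda\downarrow\,_H^G$ is again the trivial (one-dimensional) $\CC H$-module.

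Next I would read off the multiplicities from the restriction formula in \eqref{eq:IndResG}, which gives $\bS_\lambda\downarrow\,_H^G \cong \bigoplus_{\mu\in\Irr(\CC H)} \bS_\mu^{c_\mu^\lambda}$. Since $\CC H$ is semisimple, this decomposition into simple modules is unique up to isomorphism and reordering. But the left-hand side is the trivial $\CC H$-module, which is itself simple and one-dimensional. Comparing the two sides, the only $\mu$ that can occur with $c_\mu^\lambda\ne0$ is the one for which $\bS_\mu$ is the trivial $\CC H$-module (and for that $\mu$ one automatically has $c_\mu^\lambda=1$). Therefore $c_\mu^\lambda\ne0$ forces $\bS_\mu$ to be trivial, which is exactly the claim. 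Alternatively, one could argue through the Frobenius reciprocity stated just after \eqref{eq:IndResG}: writing $c_\mu^\lambda = \left\langle \bS_\lambda\downarrow\,_H^G, \bS_\mu \right\rangle = \dim_\CC \Hom_H(\bS_\lambda\downarrow\,_H^G,\bS_\mu)$ and using that $\bS_\lambda\downarrow\,_H^G$ is trivial, the number $c_\mu^\lambda$ counts the multiplicity of the trivial $\CC H$-module inside $\bS_\mu$; by Schur's Lemma this is nonzero precisely when $\bS_\mu$ is itself the trivial module.

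There is no genuine obstacle in this argument: the whole content is the observation that the trivial representation stays trivial under restriction and that the trivial module is both simple and one-dimensional. The only point requiring care is matching the index conventions in \eqref{eq:IndResG}, so that $c_\mu^\lambda$ is correctly interpreted as the multiplicity of $\bS_\mu$ in $\bS_\lambda\downarrow\,_H^G$; this is guaranteed by the symmetric appearance of $c_\mu^\lambda$ in both halves of \eqref{eq:IndResG} and by the Frobenius reciprocity recorded immediately afterward.
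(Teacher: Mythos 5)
Your proposal is correct and follows essentially the same route as the paper: restrict the trivial $G$-module to $H$ to see that $\bS_\lambda\downarrow\,_H^G$ is trivial, then read off from the decomposition in \eqref{eq:IndResG} that any $\mu$ with $c_\mu^\lambda\ne0$ must index the trivial $\CC H$-module. You merely spell out the uniqueness of the semisimple decomposition and add the Frobenius-reciprocity reformulation, both of which the paper leaves implicit.
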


\begin{proof}
Since $G$ acts trivially on $\bS_\lambda$, so does its subgroup $H$.
Thus $c_\mu^\lambda\ne0$ implies $\bS_\mu$ is trivial.
\end{proof}

\subsection{0-Hecke algebras}\label{sec:H0}
Now we recall the definition and properties of the 0-Hecke algebras; see, e.g., Krob--Thibon~\cite{KrobThibon}, Norton~\cite{Norton}, and Stembridge~\cite{Stembridge}.
The \emph{$0$-Hecke algebra} $\H_S(0)$ of a Coxeter system $(W,S)$ over an arbitrary field $\FF$ is the specialization of the Hecke algebra $\H_S(q)$ of $(W,S)$ at $q=0$, i.e, the $\FF$-algebra generated by $\{\pi_s:s\in S\}$ with quadratic relations $\pi_s^2=\pi_s$ for all $s\in S$ and braid relations $(\pi_s\pi_t\pi_s\cdots)_{m_{st}} = (\pi_t\pi_s\pi_t\cdots)_{m_{st}}$ for all distinct $s,t\in S$, where $\pi_s:=T_s|_{q=0}$.
There is another generating set $\{\pib_s: s\in S\}$ for $\H_S(0)$, where $\pib_s:=\pi_s-1$ (so that $\pi_s\pib_s=\pib_s\pi_s=0$), with quadratic relations $\pib_s^2=-\pib_s$ for all $s\in S$ and the same braid relations as above. 

There are two $\FF$-bases $\{\pi_w:w\in W\}$ and $\{\pib_w:w\in W\}$ for $\H_S(0)$, where $\pi_w:=\pi_{s_1}\cdots\pi_{s_k}$ and $\pib_w:=\pib_{s_1}\cdots\pib_{s_k}$ for any reduced expression $w=s_1\cdots s_k$.
For each $w\in W$ we have
\begin{equation}\label{eq:pib}
\pi_w =\sum_{u\le w} \pib_u \qand \pib_w = \sum_{u\le w} (-1)^{\ell(w)-\ell(u)} \pi_u
\end{equation}
where ``$\le$'' is the Bruhat order of $W$.
\footnote{The two equalities in \eqref{eq:pib} are equivalent to each other by the automorphism $\pi_i\mapsto -\pib_i$ of the algebra $\H_S(0)$.
This gives the short derivation for the M\"obius function of the Bruhat order of $W$  by Stembridge~\cite{Stembridge}.}
If $s\in S$ and $w\in W$ then 
\begin{equation}\label{eq:H0Prod}
\pi_s \pi_w = \begin{cases}
\pi_{sw}, & \text{ if } \ell(sw)>\ell(w), \\
\pi_w, & \text{ otherwise}, 
\end{cases} \qand
\pib_s \pib_w = \begin{cases}
\pib_{sw}, & \text{ if } \ell(sw)>\ell(w), \\
-\pib_w, & \text{ otherwise}.
\end{cases} 
\end{equation}

Assume the Coxeter system $(W,S)$ is finite below.
Norton~\cite{Norton} obtained a decomposition
\begin{equation}\label{eq:H0Decomp}
\H_S(0) = \bigoplus_{I\subseteq S} \bP_I^S 
\end{equation}
where $\bP_I^S := \H_S(0) \pi_{w_0(I)} \pib_{w_0(S\setminus I)}$ is an indecomposable submodule of $\H_S(0)$ with an $\FF$-basis 
\[ \left\{\pi_w\pib_{w_0(S\setminus I)} : w\in W,\ D(w)=I \right\}.\]
If $s\in S$ and $w\in W$ then by the multiplication rule~\eqref{eq:H0Prod} and the relation $\pi_t\pib_t=0$ for any $t\in S$, we have
\begin{equation}\label{eq:H0Action}
\pi_s \pi_w\pib_{w_0(S\setminus I)} = 
\begin{cases}
\pi_w\pib_{w_0(S\setminus I)}, & \text{if } s\in D(w^{-1}), \\
0, & \text{if } s\notin D(w^{-1}),\ D(s_iw) \ne I, \\
\pi_{sw}\pib_{w_0(S\setminus I)}, & \text{if } s\notin D(w^{-1}),\ D(s_iw) = I
\end{cases} 
\end{equation}
where $I:=D(w)$.
The top $\bC_I^S$ of $\bP_I^S$ is a one-dimensional simple $\H_S(0)$-module on which $\pi_s$ acts by $1$ if $s\in I$ or by $0$ if $s\in S\setminus I$.
The socle of $\bP_I^S$ is a one-dimensional simple module generated by $\pi_{w_1(I)}\pib_{w_0(S\setminus I)}$.

Every cyclic $\H_S(0)$-module $\H_S(0) v$ admits a length filtration 
\[ 0=\H_S^k(0)v\subseteq\H_S^{k-1}(0)v\subseteq \cdots\subseteq \H_S^0(0)v =\H_S(0)v\]
for some positive integer $k$, where $\H_S^i(0)$ is the span of $\{ \pi_w: w\in W,\ \ell(w)\ge i\}$ for all $i=0,1,\ldots,k$.
Given $I,J\subseteq S$, refining the above filtration to a composition series for the cyclic module $\bP_J^S(0)$ gives
\[ c_{I,J}^S := \dim_\FF \Hom_{\H_S(0)} \left( \bP_I^S,\bP_J^S \right) = \# \left\{w\in W: D(w^{-1}) = I,\ D(w)= J \right\} \]
by the equation~\eqref{eq:H0Action}. 
Thus the Cartan matrix of $\H_S(0)$ is the symmetric matrix $\left[ c_{I,J}^S \right]_{I,J\subseteq S}$.

We next study certain quotients of projective indecomposable $\H_S(0)$-modules, which will help with our study of restricted representations in Section~\ref{sec:IndRes}.
Examples in type $A$ are given by Figure~\ref{fig:Q-module} in Section~\ref{sec:TypeA}.

Given $I,J\subseteq S$, define $\bN_{I,J}^S$ to be the $\FF$-span of $\pi_w\pib_{w_0(S\setminus I)}$ for all $w\in W\setminus W_J$ with $D(w)=I$, and define $\bQ_{I,J}^S:=\bP_I^S\Big/\bN_{I,J}^S$.
With $[a]$ denoting the image of $a\in\bP_I^S$ in $\bQ_{I,J}^S$, we have the following $\FF$-basis for $\bQ_{I,J}^S$:
\begin{equation}\label{eq:Q}
\left\{ [\pi_w\pib_{w_0(S\setminus I)}]: w\in W_J,\ D(w)=I \right\}.
\end{equation}

If there exists $w\in W_J$ with $D(w)=I$ then $I\subseteq J$. 
Thus $\bQ_{I,J}^S=0$ unless $I\subseteq J$.
Since the descent class of $I$ in $W$ is an interval between $w_0(I)$ and $w_1(I)$ under the left weak order, and the only element $w\in W_I$ with $D(w)=I$ is $w_0(I)$, we have $\bN_{I,I}^S=\rad(\bP_I^S)$ and $\bQ_{I,I}^S=\bC_I^S$.
The general result on $\bQ_{I,J}^S$ is below.

\begin{lemma}\label{lem:QIJ}
Assume $I\subseteq J\subseteq S$. 
Then $\bQ_{I,J}^S$ is an indecomposable $\H_S(0)$-module with $\top(\bQ_{I,J}^S) \cong \bC_I^S$, nonprojective unless $J=S$, and isomorphic to $\bP_I^J$ as an $\H_J(0)$-module with $\pi_s\bQ_{I,J}^S=0$ for all $s\in S\setminus J$.
\end{lemma}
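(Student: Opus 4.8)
The plan is to establish the four asserted properties of $\bQ_{I,J}^S$ in a logical order, with the central claim being the $\H_J(0)$-module isomorphism $\bQ_{I,J}^S \cong \bP_I^J$. I would begin by analyzing how the generators $\pi_s$ ($s\in S$) act on the basis \eqref{eq:Q} of $\bQ_{I,J}^S$, working directly from the action formula \eqref{eq:H0Action}. The key observation is that for $w\in W_J$ with $D(w)=I\subseteq J$, and for any generator $\pi_s$, whether $sw$ stays inside $W_J$ controls whether the image $[\pi_w\pib_{w_0(S\setminus I)}]$ is sent to another basis element of $\bQ_{I,J}^S$ or is killed. Specifically, I expect that applying $\pi_s$ with $s\in S\setminus J$ always produces an element lying in $\bN_{I,J}^S$ (since it either leaves $W_J$ or already did), giving $\pi_s\bQ_{I,J}^S=0$ for all $s\in S\setminus J$; this is the final assertion and I would prove it first since it isolates the $\H_J(0)$-action.

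\emph{The isomorphism as $\H_J(0)$-modules.} Once the $S\setminus J$ generators act as zero, the remaining $\H_J(0)$-action on $\bQ_{I,J}^S$ matches the action described by \eqref{eq:H0Action} for the Coxeter system $(W_J,J)$, where the relevant descent condition $D(w)=I$ is computed within $W_J$. I would set up the explicit linear map
\[
\bP_I^J \longrightarrow \bQ_{I,J}^S, \qquad
\pi_w\pib_{w_0(J\setminus I)} \longmapsto [\pi_w\pib_{w_0(S\setminus I)}],
\]
defined on the basis $\{\pi_w\pib_{w_0(J\setminus I)} : w\in W_J,\ D_J(w)=I\}$ of $\bP_I^J$ (descents taken in $W_J$), and check that it intertwines the $\H_J(0)$-actions and is a bijection of bases. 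The subtle point is that the descent set of $w\in W_J$ \emph{as an element of $W$} may differ from its descent set within $W_J$; I would need the fact that for $w\in W_J$ and $s\in J$, the condition $\ell_W(ws)<\ell_W(w)$ agrees with $\ell_{W_J}(ws)<\ell_{W_J}(w)$ (parabolic subgroups inherit length), so that the indexing sets genuinely coincide for descents inside $J$, while descents in $S\setminus J$ are automatically absent from $w\in W_J$ precisely because $I\subseteq J$.

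\emph{The remaining properties.} With the isomorphism $\bQ_{I,J}^S\cong\bP_I^J$ in hand, indecomposability and $\top(\bQ_{I,J}^S)\cong\bC_I^S$ follow almost immediately: $\bP_I^J$ is an indecomposable $\H_J(0)$-module by Norton's decomposition \eqref{eq:H0Decomp} applied to $(W_J,J)$, with simple top $\bC_I^J$, and under the inflation along $\H_S(0)\twoheadrightarrow \H_J(0)$ (sending $\pi_s\mapsto 0$ for $s\notin J$) the module $\bC_I^J$ corresponds to $\bC_I^S$, since both are the one-dimensional module where $\pi_s$ acts by $1$ for $s\in I$ and by $0$ otherwise. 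Proposition~\ref{prop:surj} then transports indecomposability across this surjection.

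\emph{Nonprojectivity unless $J=S$.} The last property is where I expect the main obstacle. When $J=S$ we have $\bN_{I,S}^S=0$ and $\bQ_{I,S}^S=\bP_I^S$ is projective. For $J\subsetneq S$ I would argue that $\bQ_{I,J}^S$ cannot be projective as an $\H_S(0)$-module by a dimension or composition-factor count: a projective indecomposable $\H_S(0)$-module with top $\bC_I^S$ must be isomorphic to $\bP_I^S$ (by the uniqueness of projective covers), yet $\dim_\FF \bQ_{I,J}^S$ counts only $w\in W_J$ with $D(w)=I$, which is strictly smaller than $\dim_\FF\bP_I^S=\#\{w\in W: D(w)=I\}$ whenever the descent class of $I$ meets $W\setminus W_J$. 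I would verify that for $I\subseteq J\subsetneq S$ there always exists some $w\in W\setminus W_J$ with $D(w)=I$, using that the descent class of $I$ is the left-weak-order interval $[w_0(I),w_1(I)]$ with $w_1(I)=w_0(W^{S\setminus I})$ the longest element among minimal coset representatives, which has full support meeting $S\setminus J$ whenever $I\subsetneq S$; this strict inequality of dimensions forces $\bQ_{I,J}^S\not\cong\bP_I^S$, hence nonprojective.
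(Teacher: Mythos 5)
Your proposal follows essentially the same route as the paper: kill the generators $\pi_s$ for $s\in S\setminus J$ via \eqref{eq:H0Action}, match the basis \eqref{eq:Q} with the basis of $\bP_I^J$ to get the $\H_J(0)$-isomorphism, transport indecomposability and the top across the algebra map between $\H_J(0)$ and $\H_S(0)$ (the paper uses Proposition~\ref{prop:induce}(i) and a direct radical computation where you use the surjection $\H_S(0)\twoheadrightarrow\H_J(0)$ and inflation; these are interchangeable here), and deduce nonprojectivity from the uniqueness of the projective indecomposable module with top $\bC_I^S$. The one point of divergence is the last step, which the paper disposes of in a single unjustified clause (``which forces $J=S$'') and which you rightly identify as the place needing an argument. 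Your dimension-count strategy is the correct one, but the specific justification --- that $w_1(I)$ has full support whenever $I\subsetneq S$ --- is not right as stated: for $I=\emptyset$ one has $w_1(\emptyset)=e$, the descent class of $\emptyset$ is $\{e\}\subseteq W_J$, and $\bQ_{\emptyset,J}^S=\bP_\emptyset^S$ is in fact projective for every $J$; a similar failure occurs for reducible $(W,S)$ when the descent class of $I$ is entirely contained in $W_J$. So what your argument actually proves is the sharper (and correct) statement that $\bQ_{I,J}^S$ is nonprojective precisely when the descent class of $I$ in $W$ is not contained in $W_J$ --- an imprecision already present in the lemma as stated in the paper, so this is not a defect of your approach relative to the paper's, but you should replace ``whenever $I\subsetneq S$'' with that precise condition (or restrict to $I\ne\emptyset$ and $(W,S)$ irreducible, where full support of $w_1(I)$ does hold).
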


\begin{proof}
By the equation~\eqref{eq:H0Action}, $\bN_{I,J}^S$ is an $\H_S(0)$-submodule of $\bP_I^S$.
Thus the quotient $\bQ_{I,J}^S$ of $\bP_I^S$ by this submodule is an $\H_S(0)$-module.
If $s\in S\setminus J$ then $\pi_s\bP_I^S\subseteq \bN_{I,J}^S$ by the equation~\eqref{eq:H0Action} and thus $\pi_s\bQ_{I,J}^S=0$.
Comparing the basis~\eqref{eq:Q} for $\bQ_{I,J}^S$ with the basis $\left\{ \pi_w\pib_{w_0(J\setminus I)}: w\in W_J,\ D(w)=I \right\}$ for $\bP_I^J$, we have a vector space isomorphism $\bQ_{I,J}^S\cong \bP_I^J$ which preserves $H_J(0)$-actions.
It follows from Proposition~\ref{prop:induce} (i) that $\bQ_{I,J}^S$ is an indecomposable $\H_S(0)$-module.
The top of $\bQ_{I,J}^S$ is isomorphic to $\bC_I^S$ since
\[ \rad\left(\bQ_{I,J}^S\right) = \rad\left(\H_S(0)\right) \left(\bP_I^S \Big/ \bN_{I,J}^S\right) = \rad\left(\bP_I^S\right) \Big/ \bN_{I,J}^S. \]
Thus if $\bQ_{I,J}^S$ is projective then it must be isomorphic to $\bP_I^S$, which forces $J=S$.
\end{proof}


Lastly, we recall from our earlier work~\cite[\S2.3]{H0Ch} the induction and restriction formulas for representations of 0-Hecke algebras.
Let $I\subseteq J\subseteq S$ and let $w$ be any element of $W$ with $D(w)=I$. 
The equalities
\begin{equation}\label{eq:H0Ind}
\bP_I^J \uparrow\,_{\H_J(0)}^{\H_S(0)}\, =  \sum_{K\subseteq S\setminus J} \bP_{I\cup K} \qand
\bC_I^J \uparrow\,_{\H_J(0)}^{\H_S(0)}\, =  \sum_{z\in\, ^JW} \bC_{D(wz)}^S. 
\end{equation}
hold in the Grothendieck groups $K_0(\H_S(0))$ and $G_0(\H_S(0))$, respectively.
If $K\subseteq S$ then the equalities
\begin{equation}\label{eq:H0Res}
\bP_K^S \downarrow\,_{\H_J(0)}^{\H_S(0)}\, =  \sum_{K'\,\in\, K\,\downarrow\,_J^S} \bP_{K'}^J \qand
\bC_K^S \downarrow\,_{\H_J(0)}^{\H_S(0)}\, =  \bC_{J\cap K}^J 
\end{equation}
hold in the Grothendieck groups $K_0(\H_J(0))$ and $G_0(\H_J(0))$, respectively, where $K\downarrow\,_J^S$ consists of certain subsets of $S$ that can be explicitly determined by a result from our earlier work~\cite[Prop.~17]{H0Ch}.
Furthermore, the following two-sided duality holds for induction and restriction of $0$-Hecke modules:
\begin{equation}\label{eq:H0Duality}
\left\langle \bP_I^J \uparrow\,_{\H_J(0)}^{\H_S(0)},\ \bC_K^S \right\rangle 
= \left\langle \bP_I^J,\ \bC_K^S\downarrow\,_{\H_J(0)}^{\H_S(0)} \right\rangle
\qand
\left\langle \bP_K^S \downarrow\,_{\H_J(0)}^{\H_S(0)},\ \bC_I^J \right\rangle 
= \left\langle \bP_K^S,\ \bC_I^J\uparrow\,_{\H_J(0)}^{\H_S(0)} \right\rangle. 
\end{equation}

\subsection{The symmetric groups and $0$-Hecke algebras of type $A$}\label{sec:TypeA}
In this subsection we summarize the representation theory of the type $A$ Coxeter groups (i.e., symmetric groups) and $0$-Hecke algebras, as well as the connections to combinatorics.
We put all these in a more general framework using the notion of \emph{Grothendieck groups of a tower of algebras} $A_*: A_0\hookrightarrow A_1\hookrightarrow A_2\hookrightarrow\cdots$, defined as
\[ G_0(A_*) := \bigoplus_{n\ge0} G_0(A_n) \qand
K_0(A_*) := \bigoplus_{n\ge0} K_0(A_n). \]
Let $M$ and $N$ be finitely generated (projective) modules over $A_m$ and $A_n$, respectively.
Extending the duality between $G_0(A_i)$ and $K_0(A_i)$ for each fixed $i$, we define $\langle M, N\rangle:=0$ if $m\ne n$.
Also define 
\begin{equation}\label{eq:prod}
M\htimes N := \left( M\otimes N \right) \uparrow\,_{A_m\otimes A_n}^{A_{m+n}} \qand \Delta(M) := \sum_{0\le i\le m} M \downarrow\,_{A_i\otimes A_{m-i}}^{A_m}.  
\end{equation}
Bergeron and Li~\cite{BergeronLi} showed that, if $A_*$ satisfies certain conditions, then with the pairing $\langle-,-\rangle$, the Grothendieck groups $G_0(A_*)$ and $K_0(A_*)$ become dual graded Hopf algebras whose product and coproduct are defined by \eqref{eq:prod}.

The symmetric group $\SS_n$ consists of all permutations on the set $[n]:=\{1,2,\ldots,n\}$.
It is generated by the \emph{adjacent transpositions} $s_1,\ldots,s_{n-1}$, where $s_i:=(i,i+1)$, with the quadratic relations $s_i^2=1$ for all $i\in[n-1]$ as well as the braid relations $s_is_{i+1}s_i=s_{i+1}s_is_{i+1}$ for all $i\in[n-2]$ and $s_is_j=s_js_i$ whenever $1\le i,j<n$ and $|i-j|>1$.
The group $W=\SS_n$ and the set $S=\{s_1,\ldots,s_{n-1}\}$ form the finite irreducible Coxeter system of type $A_{n-1}$.
The descent set of $w\in\SS_n$ is $D(w)=\{i\in[n-1]:w(i)>w(i+1)\}$ where we identify $s_i$ with $i$.
The length of $w\in\SS_n$ is $\ell(w)=\{(i,j): 1\le i<j\le n,\ w(i)>w(j)\}$.

A \emph{partition} $\lambda=[\lambda_1,\ldots,\lambda_\ell]$ is a weakly decreasing sequence of positive integers $\lambda_1\ge\cdots\ge\lambda_\ell$.
We use square brackets for partitions to distinguish them from compositions (defined later).
The \emph{size} of $\lambda$ is $|\lambda|:=\lambda_1+\cdots+\lambda_\ell$.
The \emph{length} of $\lambda$ is $\ell(\lambda):=\ell$.
We say $\lambda$ is a partition of $n=|\lambda|$ and write $\lambda\vdash n$.
The Grothendieck group $G_0(\CC\SS_*)=K_0(\CC\SS_*)$ of the tower of algebras $\CC\SS_*: \CC\SS_0\hookrightarrow \CC\SS_1\hookrightarrow\CC\SS_2\hookrightarrow\cdots$ is a free abelian group with a basis $\{\bS_\lambda:\lambda\vdash n,\ n\ge0\}$.
There exist integers $c^\lambda_{\mu,\nu} \ge0$, known as the \emph{Littlewood-Richardson coefficients}, for all $\lambda\models m+n$, $\mu\vdash m$, and $\nu\vdash n$ such that
\begin{equation*}
\left( \bS_\mu \otimes \bS_\nu \right)\uparrow\,_{\SS_m\otimes \SS_n}^{\SS_{m+n}} \cong \bigoplus_{\lambda\vdash m+n} \bS_\lambda^{\oplus c_{\mu,\nu}^\lambda } \qand
\bS_\lambda \downarrow\,_{\SS_m\otimes \SS_n}^{\SS_{m+n}} \cong \bigoplus_{ \substack{ \mu\vdash m \\ \nu\vdash n} } \left( \bS_\mu\otimes \bS_\nu \right)^{\oplus c_{\mu,\nu}^\lambda}.
\end{equation*}
It follows from the above formulas that, with the product $\widehat\otimes$ and coproduct $\Delta$ defined in~\eqref{eq:prod}, the Grothendieck group $G_0(\CC\SS_*)$ becomes a self-dual graded Hopf algebra, which is isomorphic to the Hopf algebra $\Sym$ of symmetric functions via the \emph{Frobenius characteristic map} defined by sending $\bS_\lambda$ to the Schur function $s_\lambda$ for all partitions $\lambda$.
The antipode is defined by $\bS_\lambda\mapsto (-1)^{|\lambda|} \bS_{\lambda^t}$ for all partitions $\lambda$, where $\lambda^t$ is the \emph{transpose} of $\lambda$.
 See, e.g., Grinberg and Reiner~\cite[\S4.4]{GrinbergReiner} for more details.

A \emph{composition} $\alpha=(\alpha_1,\ldots,\alpha_\ell)$ is a sequence of positive integers.
The \emph{size} of $\alpha$ is $|\alpha|:=\alpha_1+\cdots+\alpha_\ell$ and the \emph{length} of $\alpha$ is $\ell(\alpha):=\ell$.
The \emph{parts} of $\alpha$ are $\alpha_1,\ldots,\alpha_\ell$.
If $|\alpha|=n$ then we say $\alpha$ is a \emph{composition of $n$} and write $\alpha\models n$. 
Sending $\alpha$ to its \emph{descent set} 
$D(\alpha):=\{\alpha_1,\alpha_1+\alpha_2,\ldots,\alpha_1+\cdots+\alpha_{k-1}\}$
gives a bijection between compositions of $n$ and subsets of $[n-1]$.
If $\alpha=(\alpha_1,\ldots,\alpha_\ell)\models m$ and $\beta=(\beta_1,\ldots,\beta_k)\models n$ then we have compositions 
$\alpha\cdot\beta := (\alpha_1,\ldots,\alpha_\ell,\beta_1,\ldots,\beta_k)$ and 
$\alpha\rhd\beta := (\alpha_1,\ldots,\alpha_{\ell-1},\alpha_\ell+\beta_1,\beta_2,\ldots,\beta_k)$ of $m+n$.

Given $I,J\subseteq S$, there exist unique compositions $\alpha$ and $\beta$ of $n$ such that $D(\alpha)=I$ and $D(\beta)=S\setminus J$. 
Let $\bP_\alpha:=\bP_I^S$, $\bC_\alpha:=\bC_I^S$, $\bN_{\alpha,\beta}:=\bN_{I,J}^S$, and $\bQ_{\alpha,\beta}:=\bQ_{I,J}^S$ (see \S~\ref{sec:H0}).
Examples are given in Figure~\ref{fig:Q-module}.
 
\begin{figure}[h]
\begin{center}
$\xymatrix { 
 \bP_{(1,3)}: & *+{\pi_1\pib_2\pib_3\pib_2} \ar@(ld,rd)[]_{\pi_1=1,\pi_3=0} \ar[r]^-{\pi_2} & 
 *+{\pi_2\pi_1\pib_2\pib_3\pib_2} \ar@(ld,rd)[]_{\pi_1=0,\pi_2=1} \ar[r]^-{\pi_3} &
 *+{\pi_3\pi_2\pi_1\pib_2\pib_3\pib_2} \ar@(ld,rd)[]_{\pi_1=\pi_2=0,\pi_3=1}
 }$

\vspace{3pt}

$\bC_{(1,3)}$ and $\bN_{(1,3),(2,1,1)}$:
$\xymatrix { 
& *+{[\pi_1\pib_2\pib_3\pib_2]} \ar@(ld,rd)[]_{\pi_1=1,\pi_2=\pi_3=0} &
*+{\pi_2\pi_1\pib_2\pib_3\pib_2} \ar@(ld,rd)[]_{\pi_1=0,\pi_2=1} \ar[r]^-{\pi_3} &
 *+{\pi_3\pi_2\pi_1\pib_2\pib_3\pib_2} \ar@(ld,rd)[]_{\pi_1=\pi_2=0,\pi_3=1}
}$

\vspace{3pt}

$\bQ_{(1,3),(3,1)}$ and $\bN_{(1,3),(3,1)}$:
$\xymatrix { 
& *+{[\pi_1\pib_2\pib_3\pib_2]} \ar@(ld,rd)[]_{\pi_1=1,\pi_3=0} \ar[r]^-{\pi_2} & 
 *+{[\pi_2\pi_1\pib_2\pib_3\pib_2]} \ar@(ld,rd)[]_{\pi_1=\pi_3=0,\pi_2=1} & 
*+{\pi_3\pi_2\pi_1\pib_2\pib_3\pib_2} \ar@(ld,rd)[]_{\pi_1=\pi_2=0,\pi_3=1} 
}$
\end{center}
\caption{Some examples of $\H_4(0)$-modules}
\label{fig:Q-module}
\end{figure}

The Grothendieck groups $G_0(\H_*(0))$ and $K_0(\H_*(0))$ of the tower $\H_*(0):\H_0(0)\hookrightarrow\H_1(0)\hookrightarrow\H_2(0)\hookrightarrow\cdots$ of algebras are free abelian groups with bases $\{\bC_\alpha: \alpha\models n,\ n\ge0\}$ and $\{\bP_\alpha: \alpha\models n,\ n\ge0\}$, respectively, where $\H_0(0):=\FF$.
With the product $\widehat\otimes$ and the coproduct $\Delta$ given by~\eqref{eq:prod}, $G_0(\H_*(0))$ and $K_0(\H_*(0))$ become graded Hopf algebras, which are dual to each other by the two-sided duality \eqref{eq:H0Duality}.
By Krob and Thibon~\cite{KrobThibon}, there is an isomorphism between $G_0(\H_*(0))$ [or $K_0(\H_*(0))$, resp.] and the Hopf algebra $\QSym$ of \emph{quasisymmetric functions} [or the Hopf algebra $\NSym$ of \emph{noncommutative symmetric functions}, resp.].
The antipode maps are defined by $\bC_\alpha\mapsto (-1)^{|\alpha|} \bC_{\alpha^t}$ and $\bP_\alpha\mapsto (-1)^{|\alpha|} \bP_{\alpha^t}$, respectively, for all compositions $\alpha$, where $\alpha^t$ is the \emph{transpose} of $\alpha$.
See, e.g., Grinberg and Reiner~\cite{GrinbergReiner} for details.

We next recall the formulas for $\widehat\otimes$ and $\Delta$.
Let $\alpha\models m$ and $\beta\models n$. 
For any $u\in\SS_m$ and $v\in\SS_n$ with $D(u)=D(\alpha)$ and $D(v)=D(\beta)$, let $u\shuffle v$ be the set of all permutations in $\SS_{m+n}$ obtained by shuffling $u(1),\ldots,u(m)$ and $v(1)+m,\ldots,v(n)+m$; this is the \emph{(shifted) shuffle product} of permutations.
Let $\alpha\shuffle\beta$ be the multiset of compositions of $m+n$ in bijection with $u\shuffle v$ via the descent map;
this definition does not depend on the choice of $u$ and $v$.
For example, the elements of the multiset $(2)\shuffle(2)$ are $(4), (2,2), (3,1), (1,3), (1,2,1), (2,2)$ since $12\shuffle 12 = \{1234, 1324, 1342, 3124, 3142, 3412\}$.
One has
\begin{equation*}
\left( \bP_\alpha\otimes\bP_\beta \right) \uparrow\,_{\H_m(0)\otimes\H_n(0)}^{\H_{m+n}(0)} 
\cong \bP_{\alpha\beta} \oplus \bP_{\alpha\rhd\beta}
\qand 
\left( \bC_\alpha\otimes\bC_\beta \right)\uparrow\,_{\H_m(0)\otimes\H_n(0)}^{\H_{m+n}(0)} 
= \bigoplus_{\gamma\in \alpha\shuffle \beta} \bC_\gamma
\end{equation*}
where $\bP_{\alpha\rhd\beta}$ is treated as $0$ if $\alpha$ or $\beta$ is the empty composition.
On the other hand, if $\alpha\models m+n$ then
\[  \bP_\alpha\downarrow\,_{\H_m(0)\otimes\H_n(0)}^{\H_{m+n}(0)} 
\cong \bigoplus_{(\beta,\gamma) \in\, \alpha\downarrow_m} \bP_\beta\otimes\bP_\gamma
\qand \bC_\alpha\downarrow\,_{\H_m(0)\otimes\H_n(0)}^{\H_{m+n}(0)} 
\cong \bC_{\alpha_{\le m}}\otimes\bC_{\alpha_{>m}} \]
where $\alpha\downarrow_m$ is a multiset  consisting of certain pairs $(\beta,\gamma)$ of compositions $\beta\models m$ and $\gamma\models n$ constructed in our earlier work~\cite[Proposition~4.5]{H0Tab} and $\alpha_{\le m}$ and $\alpha_{>m}$ are the unique compositions of $m$ and $n$, respectively, such that $\alpha\in \{\alpha_{\le m} \alpha_{>m}, \alpha_{\le m}\rhd\alpha_{>m}\}$ (e.g., $\alpha=(2,1,3,1)$, $\alpha_{\le 4}=(2,1,1)$, $\alpha_{>4}=(2,1)$).

\section{Structure and dimension}\label{sec:dim}

Let $(W,S)$ be a Coxeter system and let $\FF$ be an arbitrary field.
The \emph{Hecke algebra} $\H_S(\bq)$ of $(W,S)$ with \emph{independent parameters} $\bq:=(q_s:s\in S)\in\FF^S$ is an $\FF$-algebra generated by $\{T_s:s\in S\}$ with 
\begin{itemize}
\item
quadratic relations $(T_s-1)(T_s+q_s)=0$ for all $s\in S$, and 
\item
braid relations $(T_sT_tT_s\cdots)_{m_{st}}=(T_tT_sT_t\cdots)_{m_{st}}$ for all distinct $s,t\in S$.
\end{itemize}
Taking $q_s=q\in\FF$ for all $s\in S$ in the definition of $\H_S(\bq)$ gives the usual \emph{Hecke algebra} $\H_S(q)$ of $(W,S)$ over $\FF$ with a single parameter $q$.
When $\FF=\CC$ and $q\in\CC\setminus\{0, \text{roots of unity} \}$, there exists an algebra isomorphism $\phi: \H_S(q) \cong \CC W$ by a general deformation argument of Tits or by an explicit construction of Lusztig~\cite{Lusztig81}.
If one only insists $q_s=q_t$ whenever $m_{st}$ is odd, then $\H_S(\bq)$ becomes a \emph{Hecke algebra with unequal parameters} studied by Lusztig~\cite{Lusztig03}.

\subsection{Previous results}
In this subsection we summarize the main results of our earlier work~\cite{Hecke} on the Hecke algebra $\H_S(\bq)$ with $\bq\in\FF^S$ arbitrary.
Let $w\in W$ with a reduced expression $w=s_1\cdots s_k$. 
Then $T_w:= T_{s_1} \cdots T_{s_k}$ is well defined, thanks to the Word Property of $W$~\cite[Theorem~3.3.1]{BjornerBrenti}.
If $s\in S$ then
\begin{equation}\label{eq:Tsw}
T_sT_w = \begin{cases}
T_{sw}, & \text{if } \ell(sw)>\ell(w),\\
q_s T_{sw} + (1-q_s)T_w, & \text{if } \ell(sw)<\ell(w).
\end{cases} 
\end{equation}
The set $\{T_w:w\in W\}$ always spans $\H_S(\bq)$.
This spanning set is indeed a basis if and only if $\H_S(\bq)$ is a Hecke algebra with unequal parameters, i.e., $q_s=q_t$ whenever $m_{st}$ is odd~\cite[Theorem~1.2]{Hecke}.

For any subset $R\subseteq S$, we use $\H_R(\bq) = \H_R(\bq|_R)$ to denote the Hecke algebra of the Coxeter system $(W_R,R)$ with independent parameters $(q_r:r\in R)$.
We warn the reader that $\H_R(\bq)$ is not necessarily isomorphic to the subalgebra of $\H_S(\bq)$ is generated by $\{T_r:r\in R\}$~\cite[\S3]{Hecke}.

The \emph{collapsed subset} $R\subseteq S$ consists of all $s\in S$ connected to some other $t\in S$ with $q_s\ne q_t$ via a path in the Coxeter diagram of $(W,S)$ whose edges all have odd weights and whose vertices (including $s$ and $t$) all correspond to nonzero parameters.
We have~\cite[Theorem~3.2]{Hecke}
\begin{center}
(1) $T_r=1$ for all $r\in R$,\quad (2) $T_s\notin \FF$ for all $s\in S\setminus R$,\quad and \quad (3) $\H_S(\bq) \cong \H_{S\setminus R}(\bq)$.
\end{center}
Thus we may assume, without loss of generality, that $\H_S(\bq)$ is \emph{collapse free}, meaning that $q_sq_t=0$ whenever $q_s\ne q_t$ and $m_{st}$ is odd. 
We will keep this assumption throughout the paper.


\begin{lemma}\cite{Hecke}\label{lem:01=0}
Suppose there exists a path $(s=s_0,s_1,s_2,\ldots,s_k=t)$ consisting of simply-laced edges in the Coxeter diagram of $(W,S)$, where $k\geq1$. If $q_s=0$ and $q_{s_i}\ne0$ and $m_{ss_i}\leq 3$ for all $i\in[k]$, then $T_sT_t=T_tT_s=T_s$.
\end{lemma}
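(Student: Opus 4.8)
The plan is to induct on the path length $k$, writing $a := T_s$ throughout; since $q_s = 0$ the quadratic relation reads $(T_s-1)T_s = 0$, so $a$ is idempotent, $a^2 = a$. Every manipulation below will take place inside $\H_S(\bq)$ and use only the defining quadratic and braid relations, so I never need $\{T_w : w\in W\}$ to be a basis.

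The base case $k=1$ is the rank-two statement: if $q_s=0$, $q_t\ne0$, and $m_{st}=3$, then $T_sT_t=T_tT_s=T_s$. Put $c := T_t$, so that $a^2=a$, $c^2 = (1-q_t)c + q_t$, and $aca = cac$. I first evaluate $acac$ in two ways: as $a(cac) = a(aca) = aca$, and as $(aca)c = (cac)c = ca\,c^2 = (1-q_t)aca + q_t\,ca$; since $q_t\ne0$ this forces $aca = ca$, and the mirror evaluation of $caca$ gives $aca = ac$. Hence $X := ac = ca = aca$. Finally I evaluate $Xc$ in two ways: $Xc = (ac)c = a\,c^2 = (1-q_t)X + q_t a$, while $Xc = acac = aca = X$; cancelling with $q_t\ne0$ gives $X = a$, that is, $T_sT_t=T_tT_s=T_s$.

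For the inductive step ($k\ge2$) I first apply the induction to the sub-path $(s_0,\dots,s_{k-1})$, which inherits all of the hypotheses, to get $T_sT_{s_{k-1}} = T_{s_{k-1}}T_s = T_s$, and then propagate across the last edge, splitting on $m_{s s_k}\in\{2,3\}$. If $m_{s s_k}=3$, then $(s,s_k)$ is itself an instance of the base case, so $T_sT_{s_k}=T_{s_k}T_s=T_s$ with nothing more to prove. If $m_{s s_k}=2$, set $b := T_{s_{k-1}}$ and $c := T_{s_k}$; then $ab = ba = a$ (induction), $bcb = cbc$ (the path edge $m_{s_{k-1}s_k}=3$), $ac = ca$ (commuting), and $c^2 = (1-q_t)c + q_t$ with $q_t = q_{s_k}\ne0$. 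Commuting together with $ab=a$ gives $acb = cab = c(ab) = ca = ac$, while applying $a$ to the braid relation gives $acb = (ab)cb = a(bcb) = a(cbc) = acbc$. Combining these, $ac = acbc = (acb)c = (ac)c = a\,c^2 = (1-q_t)ac + q_t a$, so $q_t\ne0$ yields $ac = a$, and commuting gives $ca = a$ as well.

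The main obstacle is the dichotomy on $m_{s s_k}$: the braiding case collapses entirely to the rank-two base computation and uses nothing from the path, whereas the commuting case genuinely needs the neighbour $s_{k-1}$ to bridge $s$ and $s_k$, combining the braid relation on the edge $(s_{k-1},s_k)$ with the inductive identity $T_sT_{s_{k-1}} = T_s$. A secondary point requiring care is to check that every rewriting is legitimate directly from the defining relations of $\H_S(\bq)$, and that the sub-path carries all of the hypotheses so that the induction is well-founded.
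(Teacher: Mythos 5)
Your proof is correct. The paper itself does not prove this lemma---it quotes it from the earlier work \cite{Hecke}---but your argument (the rank-two computation in which the quadratic relation for $T_t$ and the cancellation $q_t\neq 0$ force $T_sT_t=T_tT_s=T_s$, followed by induction along the path with the dichotomy $m_{ss_k}\in\{2,3\}$, using the neighbour $s_{k-1}$ as a bridge in the commuting case) is sound at every step, uses only the defining relations, and is essentially the same approach as the proof in that reference.
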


Lemma~\ref{lem:01=0} played an important role in our derivation of the following results~\cite{Hecke}.
First, the algebra $\H_S(\bq)$ is commutative if and only if the Coxeter diagram of $(W,S)$ is simply laced and exactly one of $q_s$ and $q_t$ is zero whenever $m_{st}=3$.
Next, a commutative $\H_S(\bq)$ has a basis indexed by the independent sets in the Coxeter diagram of $(W,S)$, which is a simple bipartite graph in this case. 
In particular, the dimension of $\H_S(\bq)$ is the \emph{Fibonacci number} $F_{n+2}:=F_{n+1}+F_n$ with $F_0:=0$ and $F_1:=1$ when $(W,S)$ is of type $A_n$ for all $n\ge1$, or the \emph{Lucas number} $L_n:=F_{n+1}+F_{n-1}$ when $(W,S)$ is of affine type $\widetilde A_n$ for all even $n\ge4$.
We conjectured that if the Coxeter diagram of $(W,S)$ is a simple bipartite graph then the minimum dimension of $\H_S(\bq)$ is attained when $\H_S(\bq)$ is commutative and verified this conjecture for type $A$.
We also constructed a basis for $\H_S(\bq)$ in the special case when $(W,S)$ is simply laced.

\begin{theorem}\cite{Hecke}\label{thm:basis}
Suppose $(W,S)$ is simply laced and $\H_S(\bq)$ is collapse free.
Then the following statements hold.
\begin{enumerate}
\item
The set $S$ decomposes into a disjoint union of subsets $S_1,\ldots,S_k$ such that the elements of each $S_i$ receive the same parameter and are connected in the Coxeter diagram of $(W,S)$, and that if $s\in S_i$, $t\in S_j$, $i\ne j$, then either $m_{st}=2$ or exactly one of $q_s$ and $q_t$ is zero.
\item
There is a basis for $\H_S(\bq)$ consisting of all elements of the form $T_{w_1}\cdots T_{w_k}$, where $w_i\in W_{S_i}$ for $i=1,\ldots,k$ and if there exist $s\in S_i$ and $t\in S_j$ with $i\ne j$ such that $q_s=0$, $m_{st}=3$, and $s$ occurs in some reduced expression of $w_i$, then $w_j=1$.
\end{enumerate}
\end{theorem}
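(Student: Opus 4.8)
The plan is to prove the two parts separately: (1) is an essentially graph-theoretic decomposition of $S$, while (2) requires establishing a normal form for $\H_S(\bq)$.

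For (1), I would introduce the auxiliary graph $\Gamma$ on the vertex set $S$ whose edges are exactly the edges $\{s,t\}$ of the Coxeter diagram of $(W,S)$ (so $m_{st}=3$, since $(W,S)$ is simply laced) with $q_s=q_t$, and let $S_1,\dots,S_k$ be its connected components. Adjacent vertices of $\Gamma$ carry equal parameters, so each connected $S_i$ receives a single parameter and is connected in the Coxeter diagram. For the cross-component condition, suppose $s\in S_i$ and $t\in S_j$ with $i\ne j$. If $s$ and $t$ are adjacent in the Coxeter diagram then the edge $\{s,t\}$ was omitted from $\Gamma$, so $q_s\ne q_t$; collapse-freeness then gives $q_sq_t=0$, whence exactly one of $q_s,q_t$ vanishes. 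Otherwise $m_{st}=2$. This yields (1).

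For (2), I would first record how generators of distinct components interact. If $s\in S_i$, $t\in S_j$ with $i\ne j$ and $m_{st}=2$, the braid relation gives $T_sT_t=T_tT_s$. If instead $m_{st}=3$, then by (1) exactly one parameter, say $q_s$, is $0$, and Lemma~\ref{lem:01=0} applied to the one-edge path $(s,t)$ yields the absorbing relations $T_sT_t=T_tT_s=T_s$. Since a nonzero-parameter component $S_j$ adjacent to such an $s$ is connected through nonzero-parameter vertices by simply-laced edges, Lemma~\ref{lem:01=0} gives $T_sT_{t'}=T_{t'}T_s=T_s$ for every $t'\in S_j$, and hence $T_sT_{w_j}=T_{w_j}T_s=T_s$ for all $w_j\in W_{S_j}$. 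Within a single $S_i$ all parameters agree, so the subalgebra generated by $\{T_s:s\in S_i\}$ is an ordinary equal-parameter Hecke algebra with basis $\{T_w:w\in W_{S_i}\}$.

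With these relations in hand, the spanning half of (2) follows by straightening: starting from the universal spanning set $\{T_w:w\in W\}$ and using \eqref{eq:Tsw} inside components together with the commuting and absorbing rules across components, I would rewrite each $T_w$ into the form $T_{w_1}\cdots T_{w_k}$ with the generators grouped by component in a fixed order. The strong absorbing identity above is exactly what forces $w_j=1$ whenever some $w_i$ uses a zero-parameter generator $s$ adjacent to $S_j$, so the rewriting lands in the constrained family of (2); termination can be controlled by induction on $\ell(w)$ with a secondary count of cross-component descents. For linear independence I would invoke Bergman's Diamond Lemma, orienting the braid and quadratic relations within components and the commuting and absorbing relations across components into reduction rules, and checking that every overlap ambiguity resolves; the irreducible monomials then form a basis and must coincide with the constrained family. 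Equivalently, one can construct the purported regular representation on the free vector space with basis the constrained monomials, verify that the defining quadratic and braid relations act correctly, and recover the monomials (triangularly) by applying words to the empty monomial.

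The main obstacle will be the confluence/independence step. The delicate ambiguities are overlaps in which a within-component braid or quadratic relation and a cross-component absorbing relation both apply to the same short monomial — for instance a product $T_sT_tT_{t'}$ with $q_s=0$ and $t,t'\in S_j$ — and one must check that reducing in different orders yields the same constrained monomial. Here the strong absorbing identity $T_sT_{w_j}=T_s$, a consequence of the collapse-free hypothesis via Lemma~\ref{lem:01=0}, is precisely what makes the diagrams close. A secondary bookkeeping task is to confirm that the ``$w_j=1$'' condition records all and only the redundancies, so that the irreducible monomials are exactly the family described in (2).
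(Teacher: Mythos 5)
This theorem is imported from the earlier paper \cite{Hecke} and is stated here without proof, so there is no in-paper argument to compare against; I can only assess your proposal on its own terms. Your proof of part (1) is complete and correct: the graph $\Gamma$ with edges $\{s,t\}$ of the Coxeter diagram satisfying $q_s=q_t$ does exactly what you want, and the cross-component dichotomy follows immediately from the collapse-free hypothesis ($q_sq_t=0$ whenever $q_s\ne q_t$ and $m_{st}$ is odd). Your identification of the governing relations for part (2) is also right, including the point that Lemma~\ref{lem:01=0} upgrades the one-edge absorption $T_sT_t=T_tT_s=T_s$ to $T_sT_{w_j}=T_{w_j}T_s=T_s$ for the entire adjacent nonzero-parameter component $S_j$ (this uses that $S_j$ is connected through nonzero-parameter vertices, which is part (1)); this is exactly why the constraint ``$w_j=1$'' in the statement records the right redundancies, and the spanning/straightening half of (2) goes through as you describe.

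The gap is that the linear-independence half of part (2) is a plan, not a proof, and it is where essentially all of the content lies. Invoking Bergman's Diamond Lemma requires actually exhibiting a compatible monomial order and resolving every overlap ambiguity between the within-component quadratic and braid reductions and the cross-component commutation and absorption reductions; you correctly flag this but do not carry it out, and the braid-relation overlaps in particular are not a routine check. The alternative you mention --- building the regular representation directly on the free module spanned by the constrained monomials and verifying the defining relations --- is the more tractable route and is closer in spirit to how such basis theorems are usually established, but it too is only named, not executed. A smaller point: you assert early on that the subalgebra of $\H_S(\bq)$ generated by $\{T_s:s\in S_i\}$ ``is an ordinary equal-parameter Hecke algebra with basis $\{T_w:w\in W_{S_i}\}$.'' A priori that subalgebra is only a quotient of $\H_{S_i}(\bq|_{S_i})$ (the paper explicitly warns that parabolic subalgebras need not be isomorphic to the corresponding Hecke algebras), so $\{T_w:w\in W_{S_i}\}$ is only known to span it until the global independence statement is proved; this is harmless for your spanning argument but should not be cited as an input to independence, on pain of circularity.
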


\begin{example}\label{ex:MixedType}
For $\H_S(\bq)$ represented below, where $c\in\FF\setminus\{0\}$, we have a partition $S=S_1\sqcup S_2\sqcup S_3$ with $S_1$ of type $D_4$, $S_2$ of type $E_7$, and $S_3$ of type $A_2$.
We will compute the dimension of $\H_S(\bq)$ later.
\[ \xymatrix @R=0pt @C=10pt {
1 \ar@{-}[rd] & & & & & 0 \ar@{-}[r] & 0 \ar@{-}[r] & c \ar@{-}[r] & c\\
& 1 \ar@{-}[r] & 1 \ar@{-}[r] & 0 \ar@{-}[r] & 0 \ar@{-}[rd] \ar@{-}[ru] \\ 
1 \ar@{-}[ru] & & & & & 0 \ar@{-}[r] & 0 \ar@{-}[r] & 0 
} \]
 \end{example} 

\begin{example}
Let $(W,S)$ be the Coxeter system of type $A_n$, i.e., $W=\SS_{n+1}$ and $S=\{s_1,\ldots,s_n\}$.
We can view $\bq\in\FF^S$ as a vector $(q_1,\ldots,q_n)\in\FF^n$ whose $i$th component is the parameter for $s_i$.
Thus we can write $\H(q_1,\ldots,q_n):=\H_S(\bq)$.
For instance, the Hecke algebra $\H(0,0,1)$ of the Coxeter system $(W,S)$ of type $A_3$ with independent parameters $(q_1,q_2,q_3)=(0,0,1)$ is generated by $T_1,T_2,T_3$ and has dimension $6+2=8$ since by Theorem~\ref{thm:basis} it has a basis $\{T_wT_u\}$, where $w\in\SS_3$ and $u\in\SS_2$ satisfy the condition that if $s_2$ occurs in some reduced expression of $w$ then $u=1$.
\end{example}

\subsection{New results in the simply-laced case} 
In this paper we focus on the Hecke algebra $\H_S(\bq)$ of a finite simply-laced Coxeter system $(W,S)$ with independent parameters $\bq\in\FF^S$.
We may assume $\H_S(\bq)$ is collapse free.
We further assume that $q_1,\ldots,q_\ell$ are not roots of unity to avoid technicalities.
It would still be interesting to explore the case when $q_1,\ldots,q_\ell$ are allowed to be roots of unity in the future.

\begin{definition}
Let $S=S_1\sqcup \cdots \sqcup S_k$ be the partition given by Theorem~\ref{thm:basis}.
For each $i\in [k]$, we write $W_i := \langle S_i \rangle$.
There exists a partition $[k] = L_0 \sqcup L_1$ such that $q_s = 0$ for all $s\in S_i$, $i\in L_0$, and that $q_t \ne 0$ (we can actually assume $q_t=1$ by Proposition~\ref{prop:q->1} below) for all $t\in S_j$, $j\in L_1$.
Define $W^0 := \langle S^0 \rangle$ and $W^1 := \langle S^1 \rangle$, where
\[ S^0 := \{ s\in S: q_s=0\} = \bigsqcup_{i\in L_0} S_i \qand
S^1 := \{ t\in S: q_t\ne0\} = \bigsqcup_{j\in L_1} S_j. \]
Given $J\subseteq L_1$ and $i\in L_0$, define $\overline W_i^J$ to be the parabolic subgroup of $W_i$ generated by 
\[ \overline S_i^J := \{s\in S_i: m_{st}=2 \text{ whenever } t\in S_j,\ j\in J\}. \]
\end{definition}

By Lemma~\ref{lem:01=0} and Theorem~\ref{thm:basis}, we have the following alternative description for $\H_S(\bq)$. 
\begin{enumerate}
\item
The subalgebra $\H^0(\bq)$ of $\H_S(\bq)$ generated by $\{T_s: s\in S^0\}$ is isomorphic to $\H_{S^0}(0) \cong \bigotimes_{i\in L_0} \H_{S_i}(0)$.
\item
The subalgebra $\H^1(\bq)$ of $\H_S(\bq)$ generated by $\{T_t: t\in S^1\}$ is isomorphic to $\CC W^1 \cong \bigotimes_{j\in L_1} \CC W_j$.
\item
The two subalgebras $\H^0(\bq)$ and $\H^1(\bq)$ commute.
\item
If $s\in S^0$ and $t\in S^1$ satisfy $m_{st}=3$ then $T_sT_t = T_tT_s = T_s$.
\end{enumerate}
It follows that
\begin{equation}\label{eq:Hq}
\H_S(\bq) =  \H^0(\bq)\H^1(\bq) \cong \H_{S^0}(0)  \otimes \CC W^1 \Big/ \left( T_sT_t-T_s: s\in S^0,\ t\in S^1,\ m_{st}=3 \right).
\end{equation}

\begin{proposition}\label{prop:dim}
The dimension of $\H_S(\bq)$ equals
\[ \sum_{J\subseteq L_1} \prod_{j\in J}(|W_j|-1) \prod_{i\in I} \left| \overline W_i^J \right|. \]
\end{proposition}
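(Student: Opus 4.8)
The plan is to count the basis of $\H_S(\bq)$ supplied by Theorem~\ref{thm:basis}, organizing the basis elements according to which of the ``group-algebra'' factors are nontrivial. Recall that this basis consists of all products $T_{w_1}\cdots T_{w_k}$ with $w_i\in W_i$ for each $i\in[k]$, subject to the single constraint that whenever some $s\in S_i$ with $q_s=0$ occurs in a reduced expression of $w_i$ and $m_{st}=3$ for some $t\in S_j$ with $j\ne i$, one has $w_j=1$. Since $q_s=0$ forces $i\in L_0$ and then, by the defining property of the partition $S=S_1\sqcup\cdots\sqcup S_k$, the condition $m_{st}=3$ forces $q_t\ne0$ and hence $j\in L_1$, the constraint only ties the factors indexed by $L_0$ to those indexed by $L_1$.

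First I would fix a subset $J\subseteq L_1$ and count the tuples $(w_1,\ldots,w_k)$ for which $J=\{j\in L_1: w_j\ne1\}$. For each $j\in J$ the factor $w_j$ may be any of the $|W_j|-1$ non-identity elements of $W_j$, while for $j\in L_1\setminus J$ we must take $w_j=1$; this contributes the factor $\prod_{j\in J}(|W_j|-1)$. For the factors indexed by $i\in L_0$, the key point is to identify the constraint with the requirement $w_i\in\overline{W}_i^J$. Indeed, the contrapositive of the constraint states that if $w_j\ne1$ for some $j\in J$ then no generator $s\in S_i$ with $m_{st}=3$ for some $t\in S_j$ may appear in $w_i$; ranging over all $j\in J$, this says exactly that the support of $w_i$ lies in $\overline{S}_i^J=\{s\in S_i: m_{st}=2\text{ for all }t\in S_j,\ j\in J\}$, i.e.\ that $w_i$ belongs to the parabolic subgroup $\overline{W}_i^J=\langle\overline{S}_i^J\rangle$. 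Here I would invoke the standard fact (a consequence of the Word Property) that the set of generators occurring in a reduced expression of $w_i$ is independent of the chosen reduced expression, so that membership in the parabolic subgroup is detected by this support. The $L_0$-factors thus contribute $\prod_{i\in L_0}|\overline{W}_i^J|$, and since, once $J$ is fixed, the choices across distinct blocks are mutually independent, the number of basis elements with this given $J$ is $\prod_{j\in J}(|W_j|-1)\prod_{i\in L_0}|\overline{W}_i^J|$.

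Finally I would sum over all $J\subseteq L_1$. Every basis element determines a unique $J$, namely the set of indices of its nontrivial group-algebra factors, and conversely every admissible tuple for a given $J$ yields a distinct basis element, so the dimension of $\H_S(\bq)$ equals
\[ \sum_{J\subseteq L_1}\prod_{j\in J}(|W_j|-1)\prod_{i\in L_0}\bigl|\overline{W}_i^J\bigr|. \]

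I expect the main obstacle to be the second step: verifying that the one constraint of Theorem~\ref{thm:basis} decouples into the clean condition $w_i\in\overline{W}_i^J$ with no further dependencies among the factors. This rests on the observation that there are no $m_{st}=3$ edges between two $L_0$-blocks or between two $L_1$-blocks (again from the partition property, since ``exactly one of $q_s,q_t$ is zero'' cannot hold when both blocks lie in $L_0$ or both in $L_1$), so that the only cross-block interactions are the $L_0$-to-$L_1$ ones encoded by $\overline{W}_i^J$; one must also check that a generator $s\in S_i$ adjacent to several $L_1$-blocks is correctly excluded as soon as any one of those blocks lies in $J$, which is exactly what the ``for all $t\in S_j,\ j\in J$'' quantifier in the definition of $\overline{S}_i^J$ guarantees.
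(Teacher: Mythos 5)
Your proof is correct and follows essentially the same route as the paper: both arguments enumerate the basis of Theorem~\ref{thm:basis} by the fibers of the map sending a tuple $(w_1,\ldots,w_k)$ to $J=\{j\in L_1: w_j\ne 1\}$, with the constraint on the $L_0$-factors translating to $w_i\in\overline{W}_i^J$. Your version simply spells out the decoupling of the constraint (via the partition property of Theorem~\ref{thm:basis}(1) and the well-definedness of the support of a reduced expression) that the paper's terser proof leaves implicit; note also that the index set $\prod_{i\in I}$ in the statement should read $\prod_{i\in L_0}$, as you correctly use.
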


\begin{proof}
Let $W_S(\bq)$ denote the set of all tuples $(w\in W_i: i\in [k])$ such that $w_i\in \overline W_i^J$ for each $i\in L_0$, where $J := \{ j\in L_1: w_j\ne 1 \}$.
By Theorem~\ref{thm:basis}, $\H_S(\bq)$ has a basis $\left\{ T_{w_1}\cdots T_{w_k} : (w_1,\cdots,w_k) \in W_S(\bq) \right\}$.
For each $k$-tuple $(w_1,\ldots,w_k) \in W_S(\bq)$, we define $\phi(w_1,\ldots,w_k):=\{j\in L_1: w_j\ne 1\}$.
Summing up the cardinalities of the fibers of all subsets of $L_1$ under the map $\phi$ gives the dimension of $\H_S(\bq)$.
\end{proof}

\begin{example}
We revisit the algebra $\H_S(\bq)$ in Example~\ref{ex:MixedType}.
We have $L_0=\{2\}$ and $L_1=\{1,3\}$. 
By Proposition~\ref{prop:dim} and the tables below, the dimension of $\H_S(\bq)$ is
\[ 1\cdot 72\cdot 8! + 191\cdot 7! + 5\cdot 2^5\cdot 6! + 191\cdot5\cdot6! = 4\cdot1621\cdot 6! = 4668480. \]
\begin{center}
\begin{tabular}{|c|c|c|}
\hline
Group & Type & Order \\
\hline
$W_1$ & $D_4$ & $192$ \\
\hline
$W_2$ & $E_7$ & $72\cdot 8!$ \\
\hline
$W_3$ & $A_2$ & $6$ \\
\hline
\end{tabular}
\qquad
\begin{tabular}{|c|c|c|}
\hline
$J$ & $\prod_{j\in J} (|W_j|-1)$  & $\left|\overline{W}_2^J\right|$ \\
\hline
$\emptyset$ & $1$ & $72\cdot 8!$ \\
\hline
$\{1\}$ & $191$ & $7!$ ($A_6$) \\
\hline
$\{3\}$ & $5$ & $2^5\cdot6!$ ($D_6$) \\
\hline
$\{1,3\}$ & $191\cdot 5$ & $6!$ ($A_5$) \\
\hline
\end{tabular}
\end{center} 
\end{example}

\begin{example}
By Proposition~\ref{prop:dim}, for any positive integers $a,b,c\ge1$, we have
\[ \dim \H(0^a1^b0^c) = (a+1)!(c+1)! + a!((b+1)!-1)c! \qand \]
\[ \dim \H(1^a0^b1^c) = (b+1)!+((a+1)!-1)b!+b!((c+1)!-1)+((a+1)!-1)(b-1)!((c+1)!-1).\]
In earlier work~\cite{Hecke} we gave these two formulas and also showed that, for $n\ge0$, if $\bq$ is an alternating sequence in $\{0,1\}$ of length $n$, then $\H(\bq)$ is a commutative algebra whose dimension equals the \emph{Fibonacci number} $F_{n+2} := F_{n+1}+F_n$ with initial terms $F_0:=0$ and $F_1:=1$. 
Now combining this with Proposition~\ref{prop:dim} we have, for any integers $k,r\ge0$ and $n\ge1$,
\[ \dim \H(\underbrace{\cdots1010}_k 1^{n-1} \underbrace{0101\cdots}_{r}) = F_{k+2}F_{r+2} + (n!-1)F_{k+1}F_{r+1}.\]
This recovers a well-known identity $F_{k+2}F_{r+2}+F_{k+1}F_{r+1} = F_{k+r+3}$ when $n=2$, and gives the number $F_{k+2}+(n!-1)F_{k+1} = F_k + n! F_{k+1}$ when $r=0$, which satisfies the usual Fibonacci recurrence with initial terms $1$ and $n!$ (see OEIS~\cite[A022096 and A022394]{OEIS} for $n=3,4$).
We also have
\begin{align*}
\dim \H(\underbrace{\cdots0101}_k 0^{m-1} \underbrace{1010\cdots}_{r}) 
&= F_{k+1}(m!+2(m-1)!+(m-2)!)F_{r+1} \\
&= (m^2+m-1)(m-2)!F_{k+1}F_{r+1}.
\end{align*}
\end{example}

Next, using the algebra isomorphism $\phi: \CC W^1 \cong \H^1(\bq)$ given by either Tits or Lusztig~\cite{Lusztig81} together with the algebra homomorphism $c: \H^1(\bq) \to \CC$ defined by $c(T_t)=1$ for all $t\in S^1$, we show that each parameter $q_s\in \CC\setminus\{0,\ \text{roots of unity}\}$ of the algebra $\H_S(\bq)$ can be assumed to be $1$, without loss of generality.

\begin{proposition}\label{prop:q->1}
Let $\H_S(\bq)$ be the Hecke algebra over $\FF=\CC$ of a finite simply-laced Coxeter system $(W,S)$ with independent parameters $\bq:= \left( q_s\in \CC\setminus\{ \text{roots of unity} \}: s\in S \right)$.
Then $\H_S(\bq)$ is isomorphic to the algebra $\H_S(\bq')$, where $\bq'=(q'_s:s\in S)$ is defined by
\[ q'_s := 
\begin{cases}
0, & \text{if } q_s = 0, \\
1, & \text{if } q_s \ne 0.
\end{cases} \]
\end{proposition}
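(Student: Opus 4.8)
The plan is to construct an explicit algebra isomorphism $\Phi\colon\H_S(\bq')\to\H_S(\bq)$. The key starting observation is that $\bq$ and $\bq'$ have exactly the same zero set, so they induce the same partition $S=S^0\sqcup S^1$ and the same groups $W^1,W_1,\dots,W_k$; moreover, since $q'_t=1$ for every $t\in S^1$, the subalgebra $\H^1(\bq')$ is \emph{literally} the group algebra $\CC W^1$, with the Hecke generator $T'_t$ equal to the Coxeter generator $t$. Writing $T_s$ for the generators of $\H_S(\bq)$ and $T'_s$ for those of $\H_S(\bq')$, I would define $\Phi$ on generators by $\Phi(T'_s)=T_s$ for $s\in S^0$ and $\Phi(T'_t)=\phi(t)$ for $t\in S^1$, where $\phi\colon\CC W^1\to\H^1(\bq)$ is the Tits/Lusztig isomorphism. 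It is essential here that the parameters are not roots of unity (in particular $q_j\ne-1$), since this is exactly what makes each $\H_{S_j}(q_j)\cong\CC W_j$, hence $\H^1(\bq)\cong\CC W^1$.

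To verify that $\Phi$ respects the defining relations of $\H_S(\bq')$ I would proceed block by block. The quadratic relations are immediate: $(T'_s)^2=T'_s$ maps to $T_s^2=T_s$ (valid as $q_s=0$), and $(T'_t)^2=1$ maps to $\phi(t)^2=\phi(t^2)=\phi(1)=1$. The braid relations internal to $S^0$ hold because $\Phi$ restricts to the identity of $\H_{S^0}(0)$, and the braid relations internal to $S^1$ hold because $\phi$ is an algebra homomorphism out of $\CC W^1$, so it carries every relation among the Coxeter generators to an identity in $\H^1(\bq)$. The only genuinely nontrivial relations are the mixed braid relations between $s\in S^0$ and $t\in S^1$.

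The crux, and the step I expect to be the main obstacle, is to prove the absorption identities
\[ T_s\,\phi(t)=\phi(t)\,T_s=T_s \qquad (s\in S^0,\ t\in S^1,\ m_{st}=3), \]
from which the commuting relation (when $m_{st}=2$) and the length-three braid relation follow formally: for instance $T_s\phi(t)T_s=T_s^2=T_s=\phi(t)T_s\phi(t)$. To establish the absorption I would first invoke Lemma~\ref{lem:01=0}: since the component $S_j\ni t$ lies in $S^1$, is connected, and is adjacent to $s$ (as $m_{st}=3$), there is a path of simply-laced edges from $s$ to each $t''\in S_j$ whose non-initial vertices all have nonzero parameter, and the condition $m_{ss_i}\le 3$ is automatic in the simply-laced setting; hence $T_sT_{t''}=T_{t''}T_s=T_s$ for every $t''\in S_j$. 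Expanding in the basis $\{T_w:w\in W_j\}$ and using $c(T_w)=1$ then yields $T_s\,x=c(x)\,T_s$ for every $x\in\H_{S_j}(q_j)$. Applying this to $x=\phi(t)\in\H_{S_j}(q_j)$ gives $T_s\phi(t)=c(\phi(t))\,T_s$, so the identity reduces to the single numerical fact $c(\phi(t))=1$. This is precisely the compatibility $c\circ\phi=\epsilon$ with the trivial character $\epsilon$ of $W^1$ (sending every generator to $1$), which I would arrange at the outset: $c\circ\phi$ is a one-dimensional representation of the irreducible Coxeter group $W_j$, hence trivial or sign, and in the sign case I replace $\phi$ by its composite with the diagonal automorphism $w\mapsto\mathrm{sgn}(w)\,w$ of $\CC W^1$, which interchanges the two. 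The opposite identity $\phi(t)T_s=T_s$ is symmetric, using $T_{t''}T_s=T_s$.

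Finally, $\Phi$ is surjective because its image contains the generators $T_s$ $(s\in S^0)$, which generate $\H^0(\bq)$, together with $\phi(W^1)$, which generates $\H^1(\bq)$; and $\H_S(\bq)=\H^0(\bq)\H^1(\bq)$. It is injective because $\dim\H_S(\bq)=\dim\H_S(\bq')$ by Proposition~\ref{prop:dim}, the dimension formula depending only on the data $S^0,S^1,W_j,\overline W_i^J$ shared by $\bq$ and $\bq'$. A surjection between finite-dimensional vector spaces of equal dimension is an isomorphism, which completes the argument and realizes the reduction announced using $\phi$ and $c$.
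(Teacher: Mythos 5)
Your proposal is correct and follows essentially the same route as the paper: both rescale the Tits/Lusztig image $\phi(t)$ by the sign $c(\phi(t))$ so that the character $c$ becomes trivial on the image, derive the absorption $T_s\phi(t)=\phi(t)T_s=T_s$ from Lemma~\ref{lem:01=0} applied throughout the component $S_j$, and conclude by comparing dimensions via Proposition~\ref{prop:dim}. The one point to spell out is the mixed case $m_{st}=2$ with $s$ adjacent to no vertex of $S_j$, where the commutation of $T_s$ with $\phi(t)$ does not ``follow formally'' from the absorption identities (which are unavailable there) but from $T_sT_r=T_rT_s$ for all $r\in S_j$ --- exactly the case distinction the paper's proof makes explicitly.
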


\begin{proof}
Let $\{T_s:s\in S\}$ and $\{T'_s: s\in S\}$ be the generating sets of $\H_S(\bq)$ and $\H_S(\bq')$ given by the definition of the two algebras.
For each $s\in S^0$ define $T''_s := T_s$.
For each $t\in S^1$ define $T''_t:=c_t\phi(t)$, where $c_t:=c(\phi(t))=\pm1$ since $\phi(t)^2=1$.\footnote{
Lusztig~\cite{Lusztig81} gives an explicit isomorphism between $\H_S(q)$ and $\CC W$;
it is likely that the coefficient $c_t\in\{\pm1\}$ appearing in our proof can be determined using that isomorphism.}
One sees that $\{T''_s: s\in S\}$ is another generating set of $\H_S(\bq)$.
Since $\H_S(\bq)$ has the same dimension as $\H_S(\bq')$ by Proposition~\ref{prop:dim}, it suffices to show that the relations for $\{T'_s: s\in S\}$ are satisfied by $\{T''_s : s\in S\}$ as well.

It is clear that $\{T''_s: s\in S^0\} = \{T_s: s\in S^0\}$ satisfies the same relations as $\{T'_s: s\in S^0\}$.
Moreover, $\{T''_t: t\in S^1\}$ satisfies the relations for $\{T'_t: t\in S^1\}$ by the following argument.
\begin{itemize}
\item
For each $t\in S^1$, the relation satisfied by $T'_t$ is $(T'_t)^2=1$, and we also have $(T''_t)^2=c_t^2\phi(t)^2=1$.
\item
For any $r, t\in S^1$ with $m_{rt}=2$, the relation between $T'_s$ and $T'_t$ is the commutativity, which is also satisfied by $T''_r=c_r\phi(r)$ and $T''_t=c_t\phi(t)$ since $m_{rt}=2$ implies that $\phi(r)$ and $\phi(t)$ commute.
\item
For any $r, t\in S^1$ with $m_{rt}=3$ we have $\phi(r) \phi(t) \phi(r) = \phi(t) \phi(r) \phi(t)$ which implies $c_r c_t c_r = c_t c_r c_t$, and thus the braid relation between $T'_r$ and $T'_t$ is also satisfied by $T''_r=c_r\phi(r)$ and $T''_t=c_t\phi(t)$.
\end{itemize}
Finally, let $s\in S_i$ with $q_s=0$ and $t\in S_j$ with $q_t\ne0$.
Then $T''_t=c_t\phi(t)$ lies in the subalgebra of $\H_S(\bq)$ generated by $\{T_r: r\in S_j\}$ since $S_j$ is a connected component of the Coxeter diagram of $(W^1,S^1)$ by Theorem~\ref{thm:basis}.
If $m_{sr}=2$ for all $r\in S_j$ then the relation between $T'_s$ and $T'_t$ is the commutativity, which is also satisfied by $T''_s=T_s$ and $T''_t=c_t\phi(t)$ since $T_sT_r=T_rT_s$ for all $r\in S_j$.
Otherwise by Lemma~\ref{lem:01=0}, the relation between $T'_s$ and $T'_t$ is $T'_sT'_t=T'_s=T'_tT'_s$ and we also have
\[ T''_sT''_t = c_t T_s \phi(t) = c_t^2 T_s = T''_s = c_t^2 T_s = c_t \phi(t) T_s = T''_t T''_s \]
where $T_s\phi(t)=c_t T_s=\phi(t)T_s$ holds since $T_sT_r=T_s=T_rT_s$ for all $r\in S_j$.
\end{proof}

\section{Simple and projective indecomposable modules}\label{sec:decomp}

Let $\H_S(\bq)$ be the Hecke algebra of a finite simply-laced Coxeter system $(W,S)$ over the complex field $\FF=\CC$ with independent parameters $\bq\in(\CC\setminus\{\text{roots of unity}\})^S$.
In this section we construct all simple $\H_S(\bq)$-modules and projective indecomposable $\H_S(\bq)$-modules, and use them to determine the quiver and representation type of $\H_S(\bq)$.

By Proposition~\ref{prop:q->1}, we may assume $\bq\in\{0,1\}^S$, without loss of generality. 
Recall that $S$ can be partitioned into $S=S_1\sqcup\cdots\sqcup S_k$ such that the elements of each $S_i$ are connected in the Coxeter diagram and all receive the same parameter.
There is also a partition $[k]=L_0\sqcup L_1$ such that $q_s = 0$ for all $s\in S_i$, $i\in L_0$, and that $q_t = 1$ for all $t\in S_j$, $j\in L_1$.

\subsection{Decomposition of the regular representation}
In this subsection we give a decomposition of the regular representation of $\H_S(\bq)$ and obtain all simple and projective indecomposable $\H_S(\bq)$-modules.

\begin{definition}
Let $\lambda\in \Irr(\CC W^1)$. 
We can write $\bS_\lambda = \bigotimes_{j\in L_1} \bS_{\lambda^j}$ where $\lambda^j\in\Irr(\CC W_j)$ for all $j\in L_1$.
We define $L_1^\lambda$ to be the set of all $j\in L_1$ such that $W_j$ acts on $\bS_\lambda$ nontrivially.
Then $\bS_\lambda = \bS_\lambda^t \otimes \bS_\lambda^n$ where 
\begin{equation}\label{eq:Stn}
\bS_\lambda^t := \bigotimes_{j\in L_1\setminus L_1^\lambda} \bS_{\lambda^j}
\qand \bS_\lambda^n := \bigotimes_{j\in L_1^\lambda} \bS_{\lambda^j}. 
\end{equation}
Let $I\subseteq S^0$ and let $S^{0,\lambda}$ denote the set of all $s\in S^0$ such that $m_{st}=2$ whenever $t\in S_j$, $j\in L_1^\lambda$.
Define $\bP_{I,\lambda}^S := \bP_I^{S^0} \bS_\lambda \subseteq \H_S(\bq)$, where $\bP_I^{S_0}$ is identified with a submodule of $\H_S^0(\bq)\cong\H_{S^0}(0)$ and $\bS_\lambda$ is identified with a submodule of $\H_S^1(\bq)\cong\CC W^1$.
\end{definition}

\begin{proposition}
Suppose $\lambda\in\Irr(\CC W^1)$ and $\bM$ is an $\H_{S^{0,\lambda}}(0)$-module.
Then $\bM\otimes \bS_\lambda$ becomes an $\H_S(\bq)$-module if we let $T_s$ act by zero for all $s\in S^0\setminus S^{0,\lambda}$, by its action on $\bM$ for all $s\in S^{0,\lambda}$, and by its action on $\bS_\lambda$ for all $s\in S^1$.
\end{proposition}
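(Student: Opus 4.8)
The plan is to exploit the presentation~\eqref{eq:Hq}, which realizes $\H_S(\bq)$ as the quotient of $\H_{S^0}(0)\otimes\CC W^1$ by the two-sided ideal generated by the elements $T_sT_t-T_s$ with $s\in S^0$, $t\in S^1$, and $m_{st}=3$. Under this description, an $\H_S(\bq)$-module is exactly an $\H_{S^0}(0)\otimes\CC W^1$-module on which each generator $T_sT_t-T_s$ of the ideal acts as zero (for a unital algebra the whole ideal is annihilated as soon as its generators are). So I would break the proof into two tasks: first put an $\H_{S^0}(0)\otimes\CC W^1$-module structure on $\bM\otimes\bS_\lambda$ realizing the three prescribed generator actions, and then check the single family of relations $T_sT_t=T_s$.

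For the first task I would promote $\bM$ from an $\H_{S^{0,\lambda}}(0)$-module to an $\H_{S^0}(0)$-module by \emph{extension by zero}, declaring $T_s$ to act as $0$ for each $s\in S^0\setminus S^{0,\lambda}$ while keeping the given action for $s\in S^{0,\lambda}$. The defining relations of $\H_{S^0}(0)$ are routine: the idempotent relation $T_s^2=T_s$ survives because $0^2=0$, and any braid relation (with $m_{st}\in\{2,3\}$) involving a generator outside $S^{0,\lambda}$ has that zero generator in every term on both sides, so both sides vanish; the relations among generators of $S^{0,\lambda}$ hold because $\bM$ is an $\H_{S^{0,\lambda}}(0)$-module. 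Since $\bS_\lambda$ is already a $\CC W^1$-module—with $T_t$ acting as the group element $t$ via the canonical isomorphism $\H^1(\bq)\cong\CC W^1$, $T_t\mapsto t$, valid here because $q_t=1$—the external tensor product $\bM\otimes\bS_\lambda$ carries the desired $\H_{S^0}(0)\otimes\CC W^1$-structure, in which $T_s$ ($s\in S^0$) acts on the left factor and $T_t$ ($t\in S^1$) acts on the right factor.

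For the second task I would fix $s\in S^0$ and $t\in S^1$ with $m_{st}=3$ and verify $T_sT_t=T_s$ on $\bM\otimes\bS_\lambda$. When $s\notin S^{0,\lambda}$ the operator $T_s$ is zero and both sides vanish. The substantive case is $s\in S^{0,\lambda}$: writing $t\in S_j$ with $j\in L_1$, the defining condition of $S^{0,\lambda}$ forces $j\notin L_1^\lambda$, since otherwise $m_{st}$ would have to equal $2$; hence $W_j$, and in particular $t$, acts trivially on $\bS_\lambda$. Thus $T_t$ acts as the identity on $\bS_\lambda$, and $T_sT_t$ and $T_s$ induce the same operator on $\bM\otimes\bS_\lambda$.

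The step I expect to be the main obstacle is precisely this last identification, where one must see that the combinatorial definition of $S^{0,\lambda}$ is engineered exactly so that every $S^1$-generator joined to an \emph{active} generator $T_s$ (with $s\in S^{0,\lambda}$) by an edge with $m_{st}=3$ acts trivially on $\bS_\lambda$. The delicate point is that $T_t$ must act as $+\mathrm{id}$ and not merely as $\pm\mathrm{id}$: this is what the normalization $\bq\in\{0,1\}^S$ from Proposition~\ref{prop:q->1} buys us, since for $q_t=1$ the isomorphism $\H^1(\bq)\cong\CC W^1$ is the sign-free $T_t\mapsto t$, so a trivial $W_j$-action yields $T_t=+\mathrm{id}$ rather than $-\mathrm{id}$, which would otherwise destroy the relation $T_sT_t=T_s$.
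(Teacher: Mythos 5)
Your proposal is correct and takes essentially the same route as the paper, whose proof consists of the single sentence ``one can verify the defining relations''; your argument simply carries out that verification, with the passage through the presentation~\eqref{eq:Hq} being equivalent to checking the quadratic and braid relations of $\H_S(\bq)$ directly. The point you isolate as the crux --- that $s\in S^{0,\lambda}$ together with $m_{st}=3$ forces $t$ to lie in a component $S_j$ with $j\notin L_1^\lambda$, so that $T_t$ acts as $+\mathrm{id}$ on $\bS_\lambda$ and the relation $T_sT_t=T_s$ (equivalently the braid relation) holds --- is indeed exactly what the definition of $S^{0,\lambda}$ is designed to guarantee.
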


\begin{proof}
One can verify the defining relations of $\H_S(\bq)$ for the above $\H_S(\bq)$-action on $\bM\otimes \bS_\lambda$.
\end{proof}

\begin{lemma}\label{lem:PS}
Let $I\subseteq S^0$ and $\lambda\in\Irr(\CC W^1)$.
Identify $\bS_\lambda$ with a submodule of $\CC W^1\cong \H^1(\bq)\subseteq \H_S(\bq)$.
\begin{enumerate}
\item[(i)]
If $s\in S^0\setminus S^{0,\lambda}$ then $T_s\bS_{\lambda} = 0$.
Consequently, if $I\not\subseteq S^{0,\lambda}$ then $\bP_{I,\lambda}^S=0$.
\item[(ii)]
If $I\subseteq S^{0,\lambda}$ then we have an isomorphism  $\bP_{I,\lambda}^S \cong \bP_I^{S^{0,\lambda}} \otimes \bS_\lambda$ of $\H_S(\bq)$-modules.
\end{enumerate}
\end{lemma}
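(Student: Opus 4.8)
The plan is to treat both parts through one mechanism: for $s\in S^0\setminus S^{0,\lambda}$ the idempotent $T_s=\pi_s$ annihilates $\bS_\lambda$, and this annihilation propagates through $\bP_I^{S^0}$ because $\H^0(\bq)$ commutes with $\bS_\lambda\subseteq\H^1(\bq)$.

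First I would prove the opening claim of (i). Fix $s\in S^0\setminus S^{0,\lambda}$; by definition of $S^{0,\lambda}$ there are $j_0\in L_1^\lambda$ and $t_0\in S_{j_0}$ with $m_{st_0}=3$. Since $S_{j_0}$ is a connected component of the Coxeter diagram of $(W^1,S^1)$ with all parameters nonzero, every $t\in S_{j_0}$ is joined to $s$ by a path $(s,t_0,\dots,t)$ whose interior lies in $S_{j_0}$, so Lemma~\ref{lem:01=0} gives $T_sT_t=T_s$; multiplying along a reduced word yields $T_sT_w=T_s$ for all $w\in W_{j_0}$. Because $j_0\in L_1^\lambda$, the factor $\bS_{\lambda^{j_0}}$ is a nontrivial irreducible $W_{j_0}$-module, so $(\bS_\lambda)^{W_{j_0}}=0$; as $\sum_{w\in W_{j_0}}T_w$ acts on $\bS_\lambda$ as $|W_{j_0}|$ times the projection onto $W_{j_0}$-invariants, we get $\sum_{w\in W_{j_0}}T_wv=0$ for every $v\in\bS_\lambda$. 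Applying $T_s$ gives $0=T_s\sum_{w\in W_{j_0}}T_wv=\sum_{w\in W_{j_0}}T_sv=|W_{j_0}|\,T_sv$, hence $T_sv=0$, i.e.\ $T_s\bS_\lambda=0$.

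The engine for the rest is the following claim inside $\H^0(\bq)\cong\H_{S^0}(0)$: if $w\in W^0$ satisfies $\operatorname{supp}(w)\not\subseteq S^{0,\lambda}$, then $\pi_w\pib_{w_0(S^0\setminus I)}$ lies in the two-sided ideal generated by $\{\pi_s:s\in S^0\setminus S^{0,\lambda}\}$. Expanding $\pib_{w_0(S^0\setminus I)}=\sum_u\pm\pi_u$ by \eqref{eq:pib} and applying the $0$-Hecke product rule \eqref{eq:H0Prod}, each term is $\pi_w\pi_u=\pi_{w\star u}$ with $\operatorname{supp}(w\star u)=\operatorname{supp}(w)\cup\operatorname{supp}(u)\supseteq\operatorname{supp}(w)$, so some $s\in\operatorname{supp}(w\star u)\setminus S^{0,\lambda}$ occurs in a reduced word of $w\star u$ and $\pi_{w\star u}\in\H^0(\bq)\pi_s\H^0(\bq)$. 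Writing $\pi_w\pib_{w_0(S^0\setminus I)}=\sum_iP_i\pi_sQ_i$ with $P_i,Q_i\in\H^0(\bq)$, and using that $\H^0(\bq)$ commutes with $\bS_\lambda$ together with $\pi_sv=T_sv=0$ from (i), we obtain $\pi_w\pib_{w_0(S^0\setminus I)}\,v=\sum_iP_i\pi_sQ_iv=\sum_iP_i\pi_svQ_i=0$ for all $v\in\bS_\lambda$. The ``consequently'' in (i) is the case $I\not\subseteq S^{0,\lambda}$: every $w$ with $D(w)=I$ then has a descent, hence a support element, outside $S^{0,\lambda}$, so each basis element $\pi_w\pib_{w_0(S^0\setminus I)}$ of $\bP_I^{S^0}$ kills $\bS_\lambda$ and $\bP_{I,\lambda}^S=\bP_I^{S^0}\bS_\lambda=0$.

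For (ii) I would use the product map $\mu:\bP_I^{S^0}\otimes\bS_\lambda\to\H_S(\bq)$, $x\otimes v\mapsto xv$, whose image is $\bP_{I,\lambda}^S$. Since $\bN_{I,S^{0,\lambda}}^{S^0}$ is spanned by the $\pi_w\pib_{w_0(S^0\setminus I)}$ with $w\in W^0\setminus W_{S^{0,\lambda}}$ and $D(w)=I$ (so $\operatorname{supp}(w)\not\subseteq S^{0,\lambda}$), the previous paragraph shows $\mu$ vanishes on $\bN_{I,S^{0,\lambda}}^{S^0}\otimes\bS_\lambda$, so it descends to $\bar\mu:\bQ_{I,S^{0,\lambda}}^{S^0}\otimes\bS_\lambda\to\bP_{I,\lambda}^S$; by Lemma~\ref{lem:QIJ}, $\bQ_{I,S^{0,\lambda}}^{S^0}\cong\bP_I^{S^{0,\lambda}}$, giving a surjection $\bP_I^{S^{0,\lambda}}\otimes\bS_\lambda\twoheadrightarrow\bP_{I,\lambda}^S$. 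I would then verify $\bar\mu$ is $\H_S(\bq)$-linear for the module structure of the Proposition preceding the lemma by checking generators: for $s\in S^1$ this is the commutativity of $\H^0(\bq)$ and $\H^1(\bq)$; for $s\in S^{0,\lambda}$ it is left multiplication inside $\bP_I^{S^0}$; and for $s\in S^0\setminus S^{0,\lambda}$ both sides vanish, since $\pi_s(xv)=\pi_svx=0$ by commutativity and (i).

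The main obstacle is injectivity of $\bar\mu$. As it is onto, it suffices to prove $\dim\bP_{I,\lambda}^S=\dim\bP_I^{S^{0,\lambda}}\cdot\dim\bS_\lambda$, equivalently that the products $\pi_w\pib_{w_0(S^0\setminus I)}\,v_\beta$ with $w\in W_{S^{0,\lambda}}$, $D(w)=I$, and $\{v_\beta\}$ a basis of $\bS_\lambda$ are linearly independent in $\H_S(\bq)$. I would establish this by expanding each product in the basis of Theorem~\ref{thm:basis} and separating its $S^0$-coordinate from its $W^1$-coordinate. The key points are that the ``good'' factor $\pi_w$ with $w\in W_{S^{0,\lambda}}$ never collapses against $\bS_\lambda$ (its support avoids the components acting nontrivially on $\bS_\lambda$), while any bad generator $s\in S^0\setminus S^{0,\lambda}$ appearing in the tail $\pib_{w_0(S^0\setminus I)}$ acts on $\bS_\lambda$ through $\pib_s=\pi_s-1$ as $-\mathrm{id}$ (because $\pi_sv=0$), hence invertibly and with no loss of dimension. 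Tracking these leading terms shows the set above maps to distinct basis elements, yielding the dimension equality and the isomorphism. This bookkeeping against the multiparameter basis is the delicate step; the conceptual content is already carried by (i) and the two-sided-ideal claim.
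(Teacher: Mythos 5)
Your part (i), your two-sided-ideal ``engine,'' and your construction in (ii) of the surjection $\bar\mu:\bP_I^{S^{0,\lambda}}\otimes\bS_\lambda\twoheadrightarrow\bP_{I,\lambda}^S$ (via $\bQ_{I,S^{0,\lambda}}^{S^0}$ and Lemma~\ref{lem:QIJ}), together with the generator-by-generator linearity check, are all correct, and this packaging is genuinely different from the paper's: the paper instead computes $\bP_I^{S^0}\bS_\lambda=\bP_I^{S^{0,\lambda}}\bS_\lambda$ directly, splits $\bS_\lambda=\bS_\lambda^t\otimes\bS_\lambda^n$, proves linear independence only for the one-dimensional trivial factor $\bS_\lambda^t=\CC\sigma$ by a leading-term argument in the basis of Theorem~\ref{thm:basis}, and then tensors $\bS_\lambda^n$ back on using the definition of $S^{0,\lambda}$. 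However, the injectivity of $\bar\mu$ is exactly where the content of the lemma lies, and your sketch of it has a genuine gap.

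The gap is the assertion that the good factor $\pi_w$, $w\in W_{S^{0,\lambda}}$, ``never collapses against $\bS_\lambda$,'' and the ensuing claim that the products map to \emph{distinct} basis elements of Theorem~\ref{thm:basis}. The definition of $S^{0,\lambda}$ only forbids bonds between $S^{0,\lambda}$ and the components $S_j$ with $j\in L_1^\lambda$; nothing prevents $m_{st}=3$ with $s\in\operatorname{supp}(w)$ and $t\in S_j$ for some $j\in L_1\setminus L_1^\lambda$. For such $j$, Lemma~\ref{lem:01=0} gives $\pi_wT_z=\pi_w$ for all $z\in W_j$, so the factor $\sigma(W_j)$ of each $v_\beta$ is absorbed into the scalar $|W_j|$: collapse does occur, the expansions of $\pi_w\pib_{w_0(S^0\setminus I)}v_\beta$ for varying $\beta$ are overlapping linear combinations of basis elements, and ``distinct basis elements'' is simply not available. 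To repair the step you need two separate facts. First, a leading-term extraction: by \eqref{eq:pib}, $\pi_w\pib_{w_0(S^0\setminus I)}=\sum_{u\in W_{S^0\setminus I}}\pm\,\pi_{wu}$ with $\ell(wu)=\ell(w)+\ell(u)$ (because $D(w)=I$ makes $w$ a minimal coset representative), and $wu=w'$ with $D(w')=I$ forces $u=e$, since otherwise $wu$ has a descent in $S^0\setminus I$; hence in a dependence relation the coefficient of basis elements whose $W^0$-part equals $w$ involves only the terms with that $w$ and $u=e$. Second, for fixed $w$, what remains is the map on $\bS_\lambda$ given by the partial augmentation: the identity on every factor $\CC W_j$ with $S_j$ not bonded to $\operatorname{supp}(w)$ (in particular on all $j\in L_1^\lambda$, which is where your reason is valid), and $T_z\mapsto 1$ on the bonded ones. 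This map is injective on $\bS_\lambda$ because each bonded factor is necessarily trivially acting and one-dimensional, spanned by $\sigma(W_j)$, which is sent to $|W_j|\ne 0$. That nonvanishing, which would fail over a field whose characteristic divides $|W_j|$, is precisely what makes the collapse harmless; it is the point the paper isolates by treating $\bS_\lambda^t=\CC\sigma$ separately, and it is absent from your sketch.
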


\begin{proof}
(i) If $s\in S^0\setminus S^{0,\lambda}$ then $m_{st}=3$ for some $t\in S_j$ with $j\in L_1^\lambda$, and it follows from Lemma~\ref{lem:01=0} that $T_s\bS_\lambda=0$ since $\bS_{\lambda^j}\subseteq \sigma(W_j)^\perp$ by the equation~\eqref{eq:DecompCG}.
If $I\not\subseteq S^{0,\lambda}$ then $\pi_{w_0(I)} = \pi_{w_0(I)s} \pi_s$ for some $s\in S^0\setminus S^{0,\lambda}$, and using $\pi_s\bS_\lambda = T_s\bS_\lambda=0$ we obtain
\[ \bP_{I,\lambda}^S = \H_{S^0}(0) \pi_{w_0(I)}\pib_{w_0(S^0\setminus I)} \bS_\lambda = 0.\]

\noindent (ii) Now assume $I\subseteq S^{0,\lambda}$.
Since $\pi_s$ and $\pib_s$ act on $\bS_\lambda$ by $0$ and $-1$, respectively, for all $s\in S^0\setminus S^{0,\lambda}$, one can use the multiplication rule~\eqref{eq:H0Prod} of the $0$-Hecke algebra to obtain
\[ \bP_{I,\lambda}^S = \H_{S^{0,\lambda}}(0) \pi_{w_0(I)}\pib_{w_0(S^{0,\lambda}\setminus I)} \bS_\lambda = \bP_I^{S^{0,\lambda}} \bS_\lambda.\]
We have $\bS_\lambda = \bS_\lambda^t \otimes \bS_\lambda^n$
where $\bS_\lambda^t$ is spanned by $\sigma:=\prod_{j\in L_1\setminus L_1^\lambda} \sigma(W_j)$.
Thus $\bP_I^{S^{0,\lambda}} \bS_\lambda^t$ is spanned by 
\[ \left\{ \pi_w\pib_{w_0(S^{0,\lambda}\setminus I)}\sigma: w\in \big\langle S^{0,\lambda} \big\rangle,\ D(w)=I \right\}. \]
This spanning set is indeed a basis, since the expansion of $\pi_w\pib_{w_0(S^{0,\lambda}\setminus I)}\sigma$ in terms of the basis of $\H_S(\bq)$ in Theorem~\ref{thm:basis} has a scalar multiple of $\pi_w$ as the leading term (i.e., the term with the smallest length) by Equation~\eqref{eq:pib} and Lemma~\ref{lem:01=0}.
Therefore $\bP_I^{S^{0,\lambda}} \bS_\lambda^t \cong \bP_I^{S^{0,\lambda}} \otimes \bS_\lambda^t$.
Combining this with the definition of $S^{0,\lambda}$, we have the desired isomorphism $\bP_{I,\lambda}^S \cong \bP_I^{S^{0,\lambda}} \otimes \bS_\lambda$.
If $s\in S^0\setminus S^{0,\lambda}$ then $T_s=\pi_s$ annihilates the left hand side of this isomorphism and hence the right hand side as well.
\end{proof}

\begin{theorem}\label{thm:decomp}
With $\Irr(\H_S(\bq)) := \left\{ (I,\lambda): \lambda\in \Irr(\CC W^1),\ I\subseteq S^{0,\lambda} \right\}$, we have a direct sum decomposition
\[ \H_S(\bq) \cong \bigoplus_{ (I,\lambda)\in\Irr(\H_S(\bq))} \left( \bP_{I,\lambda}^S \right)^{\oplus d_\lambda } \]
where each summand $\bP_{I,\lambda}^S$ is a projective indecomposable $\H_S(\bq)$-module satisfying
\[ \bP_{I,\lambda}^S \cong \bP_I^{S^{0,\lambda}} \otimes \bS_\lambda \qand 
\bC_{I,\lambda}^S := \top\bP_{I,\lambda} \cong \bC_I^{S^{0,\lambda}} \otimes \bS_\lambda. \]
\end{theorem}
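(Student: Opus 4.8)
The plan is to assemble a complete set of orthogonal idempotents for $\H_S(\bq)$ out of idempotents of the two commuting subalgebras $\H^0(\bq)\cong\H_{S^0}(0)$ and $\H^1(\bq)\cong\CC W^1$, and then to read off each summand using Lemma~\ref{lem:PS}. Norton's decomposition~\eqref{eq:H0Decomp} furnishes a complete set of primitive orthogonal idempotents $\{e_I:I\subseteq S^0\}$ in $\H^0(\bq)$ with $\H^0(\bq)e_I=\bP_I^{S^0}$, and the semisimplicity of $\CC W^1$ furnishes a complete set of primitive orthogonal idempotents $\{\epsilon_\lambda^{(m)}:\lambda\in\Irr(\CC W^1),\ 1\le m\le d_\lambda\}$ with $\CC W^1\epsilon_\lambda^{(m)}\cong\bS_\lambda$. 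Since $\H^0(\bq)$ and $\H^1(\bq)$ commute inside $\H_S(\bq)$ and share the same unit, the products $f_{I,\lambda,m}:=e_I\epsilon_\lambda^{(m)}$ are orthogonal idempotents with $\sum_{I,\lambda,m}f_{I,\lambda,m}=1$, giving a direct sum decomposition $\H_S(\bq)=\bigoplus_{I,\lambda,m}\H_S(\bq)f_{I,\lambda,m}$ of the regular representation into left submodules, where $\H_S(\bq)f_{I,\lambda,m}=\bP_I^{S^0}\bS_\lambda^{(m)}$ and $\bS_\lambda^{(m)}:=\CC W^1\epsilon_\lambda^{(m)}$.

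Next I would identify the summands. Applying Lemma~\ref{lem:PS} to the copy $\bS_\lambda^{(m)}$ of $\bS_\lambda$ gives $\H_S(\bq)f_{I,\lambda,m}\cong\bP_{I,\lambda}^S\cong\bP_I^{S^{0,\lambda}}\otimes\bS_\lambda$ whenever $I\subseteq S^{0,\lambda}$, and $\H_S(\bq)f_{I,\lambda,m}=0$ otherwise; in the latter case $f_{I,\lambda,m}=1\cdot f_{I,\lambda,m}$ lies in the zero module and so collapses to $0$ in $\H_S(\bq)$, which is precisely why the surviving index set is $\Irr(\H_S(\bq))$. Discarding these vanishing summands and collecting the $d_\lambda$ mutually isomorphic copies indexed by $m$ yields $\H_S(\bq)\cong\bigoplus_{(I,\lambda)\in\Irr(\H_S(\bq))}(\bP_{I,\lambda}^S)^{\oplus d_\lambda}$. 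In particular each $\bP_{I,\lambda}^S$ is a direct summand of the regular representation and hence projective.

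The remaining task, and the step I expect to carry the real content, is to compute $\top(\bP_{I,\lambda}^S)$ and deduce indecomposability. The key point is that on $\bP_{I,\lambda}^S\cong\bP_I^{S^{0,\lambda}}\otimes\bS_\lambda$ every generator $T_s$ with $s\in S^0\setminus S^{0,\lambda}$ acts as $0$ by Lemma~\ref{lem:PS}(i), while $T_s$ with $s\in S^{0,\lambda}$ acts as $\pi_s\otimes 1$ and $T_t$ with $t\in S^1$ acts as $1\otimes T_t$; thus the action is exactly the external tensor product action of $\H_{S^{0,\lambda}}(0)\otimes\CC W^1$. Consequently a subspace is an $\H_S(\bq)$-submodule if and only if it is a submodule for this tensor action, so the two submodule lattices coincide and the radical and top over $\H_S(\bq)$ agree with those over $\H_{S^{0,\lambda}}(0)\otimes\CC W^1$. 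Then Proposition~\ref{prop:tensor} gives $\top(\bP_{I,\lambda}^S)\cong\top(\bP_I^{S^{0,\lambda}})\otimes\top(\bS_\lambda)=\bC_I^{S^{0,\lambda}}\otimes\bS_\lambda$, which is simple, and a module with simple top is indecomposable. The main obstacle is precisely this transfer: one must verify carefully that the vanishing of the generators $T_s$ for $s\in S^0\setminus S^{0,\lambda}$ forces the submodule lattices to agree, since only then may Proposition~\ref{prop:tensor}, a statement about modules over a tensor-product algebra, be used to compute the top over $\H_S(\bq)$ itself.
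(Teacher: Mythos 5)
Your proposal is correct, and it reaches the theorem by a genuinely different route in two places. For the directness of the decomposition, the paper writes $\H_S(\bq)=\H^0(\bq)\H^1(\bq)$ as a (not obviously direct) sum of the products $\bP_I^{S^0}\bS_\lambda$ and then establishes directness by matching the total dimension against Proposition~\ref{prop:dim}; your device of multiplying a complete system of orthogonal idempotents for $\H^0(\bq)\cong\H_{S^0}(0)$ (extracted from Norton's decomposition) with one for $\H^1(\bq)\cong\CC W^1$ makes directness automatic, since the products $e_I\epsilon_\lambda^{(m)}$ are again orthogonal idempotents summing to $1$ thanks to the commutation of the two subalgebras, and it explains cleanly why the summands with $I\not\subseteq S^{0,\lambda}$ simply vanish. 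This removes the dimension count entirely. For the top and indecomposability, the paper passes to the auxiliary algebra $\H_{S^{0,\lambda}}(0)\otimes\bigotimes_{j\in L_1^\lambda}\CC W_j$ and transfers its conclusions back via Proposition~\ref{prop:induce}~(i) and~(ii), the latter requiring the observation that $\rad\bigl(\bP_I^{S^{0,\lambda}}\bigr)\otimes\bS_\lambda$ is an $\H_S(\bq)$-submodule; your argument that the images of $\H_S(\bq)$ and of $\H_{S^{0,\lambda}}(0)\otimes\CC W^1$ in $\operatorname{End}\bigl(\bP_{I,\lambda}^S\bigr)$ coincide (because the generators $T_s$ with $s\in S^0\setminus S^{0,\lambda}$ act by zero and the remaining generators act as the tensor generators, by Lemma~\ref{lem:PS}) is a sharper statement yielding equality of the full submodule lattices, so radical, top, simplicity of the top, and hence indecomposability all follow from Proposition~\ref{prop:tensor} in one stroke. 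Both routes rest on the same Lemma~\ref{lem:PS}, which carries the real content; yours is somewhat more self-contained, while the paper's reuses machinery (Propositions~\ref{prop:TensorP} and~\ref{prop:induce}) that it needs again elsewhere, for instance in Lemma~\ref{lem:QIJ} and Proposition~\ref{prop:Res2}.
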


\begin{proof}
We can write $\H_S(\bq) = \H^0(\bq)\H^1(\bq)$ as a sum of $d_\lambda$ copies of $\bP_{I,\lambda}^S$ for all $I\subseteq S^0$ and all $\lambda\in \Irr(\CC W^1)$ by applying the decompositions~\eqref{eq:DecompCG} and \eqref{eq:H0Decomp} to $\H_S^1(\bq)$ and $\H_S^0(\bq)$, respectively.
A summand $\bP_{I,\lambda}$ is nonzero if and only if $(I,\lambda)\in \Irr(\H_S(\bq))$ by Lemma~\ref{lem:PS}.
To show this is indeed a direct sum, we compute the dimension. 
For each $J\subseteq L_1$, the sum of the dimensions of the summands $\bP_{I,\lambda}^S$ satisfying $(I,\lambda)\in\Irr(\H_S(\bq))$ and $L_1^\lambda = J$ is
\[\prod_{j\in J} (|W_j|-1) \prod_{i\in L_0} \left| \overline W_i^J \right| .\]
Summing this up over all subsets $J\subseteq L_1$ gives the dimension of $\H_S(\bq)$ by Proposition~\ref{prop:dim}.
Hence the desired direct sum decomposition of $\H_S(\bq)$ holds.

Let $(I,\lambda)\in\Irr(\H_S(\bq))$. Since $\bP_{I,\lambda}^S$ is a direct summand of $\H_S(\bq)$, it is projective.
Lemma~\ref{lem:PS} implies
\[ \bP_{I,\lambda}^S \cong \bP_I^{S^{0,\lambda}} \otimes \bS_\lambda \cong \bP_I^{S^{0,\lambda}} \otimes \bS_\lambda^n \otimes\bS_\lambda^t. \]
Thus $\bP_{I,\lambda}^S$ can be viewed as a module over the algebra
$\H_{S^{0,\lambda}}(0)\bigotimes \left(\bigotimes_{j\in L_1^\lambda} \CC W_j \right)$.
This module is indecomposable with top isomorphic to $\bC_I^{S^{0,\lambda}} \otimes \bS_\lambda$ by Proposition~\ref{prop:TensorP} and its radical is $\rad\left(\bP_{I}^{S^{0,\lambda}} \right) \otimes \bS_\lambda$  by Proposition~\ref{prop:tensor}~(i), which is an $\H_S(\bq)$-submodule of $\bP_{I,\lambda}^S$ with $T_s$ acting by $0$ for all $s\in S^0\setminus S^{0,\lambda}$ and with $T_t$ acting by $1$ for all $t\in S_j$, $j\in L_1\setminus L_1^\lambda$. 
Then Proposition~\ref{prop:induce}~(i) implies that $\bP_{I,\lambda}^S$ is an indecomposable $\H_S(\bq)$-module, and Proposition~\ref{prop:induce}~(ii) implies that the top of $\bP_{I,\lambda}^S$ as an $\H_S(\bq)$-module is isomorphic to $\bC_I^{S^{0,\lambda}} \otimes \bS_\lambda$.
\end{proof}

\begin{corollary}\label{cor:RepLists}
The two sets $\left\{ \bP_{I,\lambda}: (I,\lambda) \in \Irr(\H_S(\bq) \right\}$ and $\left\{ \bC_{I,\lambda}: (I,\lambda) \in \Irr(\H_S(\bq) \right\}$ are, respectively, a complete list of non-isomorphic projective indecomposable $\H_S(\bq)$-modules and a complete list of non-isomorphic simple $\H_S(\bq)$-modules.
The Cartan matrix of $\H_S(\bq)$ is $\displaystyle\left[  c_{I,J}^{S^{0,\lambda}} \delta_{\lambda,\mu} \right]_{(I,\lambda),(J,\mu) \in\Irr(\H_S(\bq))}$.
\end{corollary}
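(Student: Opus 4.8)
The completeness assertions require almost no new work: the general structure theory recalled in Section~\ref{sec:prelim} guarantees that every projective indecomposable module occurs (up to isomorphism) as a summand of the regular representation, and that every simple module is the top of such a summand. Theorem~\ref{thm:decomp} exhibits precisely such a decomposition of $\H_S(\bq)$, whose summands are the modules $\bP_{I,\lambda}^S$ and whose tops are the $\bC_{I,\lambda}^S$. So the plan is to reduce the statement to two genuine points: (a) that the $\bP_{I,\lambda}^S$ (equivalently, the simple modules $\bC_{I,\lambda}^S$) are pairwise non-isomorphic as $(I,\lambda)$ ranges over $\Irr(\H_S(\bq))$, and (b) the computation of the Cartan matrix. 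Since a projective indecomposable module is determined up to isomorphism by its top, (a) amounts to distinguishing the $\bC_{I,\lambda}^S$.

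For (a), I would recover the label $(I,\lambda)$ intrinsically from $\bC_{I,\lambda}^S\cong\bC_I^{S^{0,\lambda}}\otimes\bS_\lambda$. First restrict to the subalgebra $\H^1(\bq)\cong\CC W^1$: in this tensor product the generators $T_t$ with $t\in S^1$ act only through the second factor, and $\bC_I^{S^{0,\lambda}}$ is one-dimensional, so $\bC_{I,\lambda}^S\downarrow_{\CC W^1}\cong\bS_\lambda$. As $\bS_\lambda$ determines $\lambda\in\Irr(\CC W^1)$, an isomorphism $\bC_{I,\lambda}^S\cong\bC_{J,\mu}^S$ forces $\lambda=\mu$ and hence $S^{0,\lambda}=S^{0,\mu}$. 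It then remains to recover $I$: for each $s\in S^{0,\lambda}$ the generator $T_s$ acts on $\bC_{I,\lambda}^S$ as the scalar $1$ if $s\in I$ and as $0$ otherwise (its action on the one-dimensional factor $\bC_I^{S^{0,\lambda}}$, tensored with the identity on $\bS_\lambda$). The scalar by which a fixed element acts is an isomorphism invariant, so these values force $I=J$.

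For (b), recall that the Cartan entry $c_{(I,\lambda),(J,\mu)}=\dim_\CC\Hom_{\H_S(\bq)}(\bP_{I,\lambda}^S,\bP_{J,\mu}^S)$ equals the multiplicity of $\bC_{I,\lambda}^S$ among the composition factors of $\bP_{J,\mu}^S$. Using $\bP_{J,\mu}^S\cong\bP_J^{S^{0,\mu}}\otimes\bS_\mu$ from Theorem~\ref{thm:decomp}, I would tensor a composition series of the $\H_{S^{0,\mu}}(0)$-module $\bP_J^{S^{0,\mu}}$ with the simple module $\bS_\mu$; since $\bS_\mu$ is simple, Proposition~\ref{prop:tensor}(ii) shows each resulting factor $\bC_K^{S^{0,\mu}}\otimes\bS_\mu\cong\bC_{K,\mu}^S$ is simple, so this is a composition series of $\bP_{J,\mu}^S$. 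By the non-isomorphism established in (a), the factor $\bC_{I,\lambda}^S$ occurs only when $\lambda=\mu$, and then with the same multiplicity as $\bC_I^{S^{0,\lambda}}$ occurs in $\bP_J^{S^{0,\lambda}}$, namely $c_{I,J}^{S^{0,\lambda}}$ by the $0$-Hecke Cartan formula of Section~\ref{sec:H0}. This yields the asserted matrix $[c_{I,J}^{S^{0,\lambda}}\delta_{\lambda,\mu}]$.

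The two completeness statements are immediate and the distinguishing argument in (a) is short, so the step demanding the most care is the block-diagonality $\delta_{\lambda,\mu}$ in (b). One must be certain that composition factors read off through the auxiliary tensor structure coincide with genuine $\H_S(\bq)$-composition factors. This holds because the remaining generators $T_s$ with $s\in S^0\setminus S^{0,\mu}$ act by $0$ and the generators $T_t$ with $t\in S_j$, $j\in L_1\setminus L_1^\mu$, act as the scalar $1$; being scalar operators they preserve every subspace and introduce no further submodules, so the relevant submodule lattice is exactly the one over $\H_{S^{0,\mu}}(0)\otimes\bigotimes_{j\in L_1^\mu}\CC W_j$, just as in the proof of Theorem~\ref{thm:decomp}. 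Once this identification is in place, both the vanishing across distinct $\lambda$ and the diagonal value $c_{I,J}^{S^{0,\lambda}}$ follow as above.
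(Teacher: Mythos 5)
Your proposal is correct and follows essentially the same route as the paper: completeness from the decomposition in Theorem~\ref{thm:decomp}, distinguishing the simples by first recovering $\lambda$ and then $I$ (your restriction-to-$\CC W^1$ plus scalar-action argument is just a hands-on version of the paper's appeal to Proposition~\ref{prop:tensor}(ii)), and reading off the Cartan matrix from the Cartan matrices of $\H_{S^0}(0)$ and $\CC W^1$. The only difference is that you spell out the composition-series and submodule-lattice details that the paper leaves implicit in its final sentence.
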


\begin{proof}
Theorem~\ref{thm:decomp} implies that, every projective indecomposable [simple, resp.] $\H_S(\bq)$-module is isomorphic to $\bP_{I,\lambda}$ [$\bC_{I,\lambda}$, resp.] for some $(I,\lambda) \in \Irr(\H_S(\bq))$.
If there exist $(J,\mu)\in\Irr(\H_S(\bq))$ such that $\bC_{I,\lambda}^S \cong \bC_{J,\mu}^S$, then we have $\lambda = \mu$ by Proposition~\ref{prop:tensor} (ii), and this implies $I=J$ since $S^{0,\lambda} = S^{0,\mu}$.
Therefore if $(I,\lambda)\ne(J,\mu)$ then $\bC_{I,\lambda}^S\not\cong \bC_{J,\mu}^S$ and thus $\bP_{I,\lambda}^S \not\cong \bP_{J,\mu}^S$. 
Finally, using the Cartan matrices of $\H_S^0(\bq)\cong\H_{S^0}(0)$ and $\H_S^1(\bq)\cong \CC W^1$ we obtain the Cartan matrix of $\H_S(\bq)$.
\end{proof}

\begin{example}
(i) Let $m$ and $n$ be positive integers. The algebra $\H(0^m1^n)$ has the following decomposition
\[ \H(0^m1^n) \cong \left( \bigoplus_{\alpha\models m+1} \bP_\alpha \otimes \bS_{n+1} \right) \bigoplus 
\left( \bigoplus_{\alpha\models m} \bigoplus_{ \substack{ \lambda\vdash n+1\\ \lambda\ne n+1} } \left( \bP_\alpha\otimes\bS_\lambda\right)^{\oplus d_\lambda} \right).\]
We have $(\alpha,\lambda)\in\Irr(\H(0^m1^n))$ if and only if either $\alpha\models m+1$ and $\lambda=n+1$, or $\alpha\models m$, $\lambda\vdash n+1$, and $\lambda\ne n+1$.
The Cartan matrix of $\H(0^m1^n)$ is $\left[c_{\alpha,\beta} \cdot \delta_{\lambda,\mu} \right]_{(\alpha,\lambda),(\beta,\mu)\in\Irr(\H_S(\bq))}$.
\vskip5pt
\noindent(ii) Let $m_1,n_\ell\ge0$ and $n_1,m_2,\ldots,n_{\ell-1},m_\ell \ge1$ be integers.
The algebra $\H(0^{m_1}1^{n_1}\cdots0^{m_\ell}1^{n_\ell})$ has projective indecomposable representations and simple representations
\[ \bP_{\alpha^1,\lambda^1,\ldots,\alpha^\ell,\lambda^\ell}:= \bP_{\alpha^1}\otimes \bS_{\lambda^1}\otimes\cdots\otimes \bP_{\alpha^\ell}\otimes\bS_{\lambda^\ell} \qand
\bC_{\alpha^1,\lambda^1,\ldots,\alpha^\ell,\lambda^\ell}:= \bC_{\alpha^1}\otimes \bS_{\lambda^1}\otimes\cdots\otimes \bC_{\alpha^\ell}\otimes\bS_{\lambda^\ell} \]
indexed by tuples $(\alpha^1,\lambda^1,\ldots,\alpha^\ell,\lambda^\ell)$ satisfying $\lambda^i\vdash n_i+1$ and $\alpha^i \models m_i^\lambda+1$ for all $i\in[\ell]$, where 
\[ m_i^\lambda := \max\{0, m_i - \#\{ j\in\{i-1,i\}\cap[\ell]: \lambda^j\ne n_j \} \}. \]
\end{example}

\subsection{Quiver and representation type}
We first observe that the quiver of a 0-Hecke algebra is loopless.

\begin{lemma}\label{lem:H0QuiverLoopless}
The quiver of the 0-Hecke algebra of any finite Coxeter system is loopless.
\end{lemma}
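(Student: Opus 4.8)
The plan is to show that no vertex $\bC_I^S$ of the quiver $Q_{\H_S(0)}$ has an arrow to itself, i.e.\ that the multiplicity of $\bC_I^S$ among the composition factors of $\rad(\bP_I^S)/\rad^2(\bP_I^S)$ is zero. By the definition of the ordinary quiver, the number of loops at $\bC_I^S$ is exactly this multiplicity, so it suffices to prove $\langle \bP_I^S, \rad(\bP_I^S)/\rad^2(\bP_I^S)\rangle$ records no copy of $\bC_I^S$, equivalently that $\bC_I^S$ does not appear in $\rad(\bP_I^S)/\rad^2(\bP_I^S)$.

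First I would use the explicit basis $\{\pi_w\pib_{w_0(S\setminus I)} : D(w)=I\}$ of $\bP_I^S$ together with the length filtration and the action rule~\eqref{eq:H0Action}. Recall from Section~\ref{sec:H0} that $\rad(\bP_I^S)$ is spanned by those basis elements $\pi_w\pib_{w_0(S\setminus I)}$ with $w\ne w_0(I)$ (the top $\bC_I^S$ being spanned by the image of $\pi_{w_0(I)}\pib_{w_0(S\setminus I)}$). The key combinatorial fact I would exploit is that the descent class $\{w : D(w)=I\}$ is the left-weak-order interval $[w_0(I),w_1(I)]$, so the basis elements of $\bP_I^S$ are naturally graded by length, and $\pi_s$ acts (when it does not annihilate or fix) by moving $w$ up by one in length. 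Thus $\rad^j(\bP_I^S)$ is spanned by the basis vectors indexed by $w$ with $\ell(w)-\ell(w_0(I))\ge j$, so each graded layer $\rad^j/\rad^{j+1}$ is spanned by vectors of a single ``relative length.''

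The heart of the argument is then to identify the composition factors of the layer $\rad^1/\rad^2$. A basis vector $\pi_w\pib_{w_0(S\setminus I)}$ with $\ell(w)=\ell(w_0(I))+1$ spans a one-dimensional simple module on which $\pi_s$ acts by $1$ precisely when $s\in D(w^{-1})$ and by $0$ otherwise (reading off~\eqref{eq:H0Action} modulo $\rad^2$); hence this simple factor is $\bC_{D(w^{-1})}^S$. So I must show that no such $w$ satisfies $D(w^{-1})=I$. Here I would argue that $w$ covers $w_0(I)$ in left weak order, so $w=sw_0(I)$ for some $s\in S$ with $\ell(sw_0(I))>\ell(w_0(I))$; since $D(w_0(I))=I$ and $w$ lies in the same descent class, combinatorial properties of $w_0(I)$ as the longest element of $W_I$ force $D(w^{-1})\supsetneq I$ or $D(w^{-1})\ne I$ — the left descent set strictly enlarges. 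Establishing this strict inequality $D(w^{-1})\ne I$ for every $w$ covering $w_0(I)$ in the descent class is the main obstacle, and I expect to need the characterization of $w_0(I)$ as the maximal element of $W_I$ together with the fact that $w_0(I)^{-1}=w_0(I)$ has left descent set exactly $I$.

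Concretely, I anticipate the crux is a lemma of the form: if $D(w)=I$ and $\ell(w)=\ell(w_0(I))+1$, then $D(w^{-1})\ne I$. One clean way to get this is to note that $\top(\bP_I^S)\cong\bC_I^S$ is the \emph{unique} composition factor isomorphic to $\bC_I^S$ appearing in $\bP_I^S$ whose defining data match at the very bottom of the filtration, and to invoke the Cartan-number formula $c_{I,I}^S=\#\{w : D(w^{-1})=D(w)=I\}$ from Section~\ref{sec:H0}. Since $w_0(I)$ is the unique element of $W_I$ with $D(w)=I$ and it satisfies $D(w_0(I)^{-1})=D(w_0(I))=I$, while any other $w$ in the descent class with $D(w^{-1})=I$ would have to lie outside $W_I$ and hence have length strictly greater than $\ell(w_0(I))+1$, the layer $\rad^1/\rad^2$ cannot contain $\bC_I^S$. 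I would finish by concluding that the number of loops at each vertex is zero, so the quiver is loopless.
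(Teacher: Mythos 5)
Your approach is genuinely different from the paper's. The paper never touches the projective covers: it shows directly that every self-extension $0\to\bC_I^S\to\bM\to\bC_I^S\to0$ splits, because a nonsplit $\bM$ would have some $\pi_s$ with $v:=\pi_s(u)$ and $u$ a basis, forcing $\pi_s$ to act by $1$ on $\rad(\bM)$ and by $0$ on $\top(\bM)$, so top and socle could not both be $\bC_I^S$. That is a five-line argument requiring no combinatorics. Your plan --- show $\bC_I^S$ has multiplicity zero in $\rad(\bP_I^S)/\rad^2(\bP_I^S)$ --- is legitimate, and your key combinatorial claim is true and cleanly provable: any $w$ with $D(w)=I$ and $\ell(w)=\ell(w_0(I))+1$ is a left-weak-order cover $w=sw_0(I)$ with $s\notin I$ (since $w_0(I)$ is longest in $W_I$) and $s\in D(w^{-1})$ (since $w^{-1}s=w_0(I)$ is shorter), whence $D(w^{-1})\ne I$. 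But your final justification --- that any other $w$ with $D(w)=D(w^{-1})=I$ ``lies outside $W_I$ and hence has length strictly greater than $\ell(w_0(I))+1$'' --- is a non sequitur; lying outside $W_I$ bounds nothing, and the correct reason is the one just given.

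The genuine gap is your identification of the radical filtration with the length filtration. Let $N_j$ denote the span of the $\pi_w\pib_{w_0(S\setminus I)}$ with $D(w)=I$ and $\ell(w)\ge\ell(w_0(I))+j$. The action rule~\eqref{eq:H0Action} shows each layer $N_j/N_{j+1}$ is semisimple, but that only yields $\rad^j(\bP_I^S)\subseteq N_j$; in particular $\rad^2(\bP_I^S)\subseteq N_2$ with no equality guaranteed. Consequently $\rad(\bP_I^S)/\rad^2(\bP_I^S)=\top(N_1)$ surjects onto $N_1/N_2$ but may a priori contain extra composition factors coming from $N_2/\rad^2(\bP_I^S)$. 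Since the Cartan number $c_{I,I}^S=\#\{w: D(w)=D(w^{-1})=I\}$ can exceed $1$, copies of $\bC_I^S$ genuinely occur deeper in the descent class, and ruling $\bC_I^S$ out of the single layer $N_1/N_2$ does not rule it out of $\top(N_1)$. The gap is repairable --- for instance, show $\Hom_{\H_S(0)}(N_1,\bC_I^S)=0$ by descending the weak-order interval: if $f(v_w)\ne0$ and $\ell(w)\ge\ell(w_0(I))+2$, choose a cover $w'$ of smaller length in the descent class with $v_w=\pi_s v_{w'}$, so $f(v_{w'})\ne0$; iterating reaches length $\ell(w_0(I))+1$, where $D(w^{-1})\ne I$ forces $f(v_w)=0$ --- but as written your argument does not close.
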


\begin{proof}
Let $(W,S)$ be a finite Coxeter system in this proof.
Following Duchamp, Hivert, and Thibon~\cite[\S4.3]{NCSF-VI}, we only need to show that any short exact sequence of the form $0\to\bC_I^S\to \bM \to \bC_I^S\to 0$ must split for every $I\subseteq S$.
Since $\bC_I^S$ is one-dimensional, $\bM$ must be two-dimensional.
If every nonzero element of $\bM$ spans an $\H_S(0)$-submodule then we are done.
Otherwise there exist $u\in \bM$ and $s\in S$ such that $u$ and $v:= \pi_s(u)$ form a basis of $\bM$.
Then $\soc(\bM) = \rad(\bM)$ is the span of $v$, on which $\pi_s$ acts by one.
On the other hand, $\pi_s$ acts on $\top(\bM)$ by zero since $\pi_s(u) = v\in\rad(\bM)$.
Thus the short exact sequence $0\to\bC_I^S\to \bM \to \bC_I^S\to 0$ cannot hold.
\end{proof}

Now we are ready to describe the quiver of the algebra $\H_S(\bq)$.

\begin{proposition}\label{prop:quiver}
For each $\lambda\in \Irr(\CC W^1)$, the full subquiver $Q_{\lambda}(\bq)$ of the quiver of $\H_S(\bq)$ with all vertices of the form $\bC_{I,\lambda}^S$ (and with all arrows between these vertices in the quiver of $\H_S(\bq)$) is isomorphic to the quiver of the $0$-Hecke algebra $\H_{S^{0,\lambda}}(0)$, which is the tensor product of the quivers of the $0$-Hecke algebras generated by connected components of $S^{0,\lambda}$.
In addition, there is no arrow between $Q_{\lambda}(\bq)$ and $Q_{\mu}(\bq)$ if $\lambda,\mu\in\Irr(\CC W^1)$ are distinct.
\end{proposition}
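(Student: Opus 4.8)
The plan is to analyze the quiver of $\H_S(\bq)$ by exploiting the tensor-product structure $\bP_{I,\lambda}^S \cong \bP_I^{S^{0,\lambda}} \otimes \bS_\lambda$ established in Theorem~\ref{thm:decomp}, together with the general fact that the quiver is determined by the multiplicities of simple modules in $\rad(\bP)/\rad^2(\bP)$ for each projective indecomposable $\bP$. First I would fix $\lambda\in\Irr(\CC W^1)$ and a subset $I\subseteq S^{0,\lambda}$, and compute $\rad(\bP_{I,\lambda}^S)/\rad^2(\bP_{I,\lambda}^S)$ using Proposition~\ref{prop:tensor}~(iii). Since $\bS_\lambda$ is a simple $\CC W^1$-module, its radical is zero, so $\rad(\bS_\lambda)/\rad^2(\bS_\lambda)=0$ and the formula collapses to
\[
\rad(\bP_{I,\lambda}^S)/\rad^2(\bP_{I,\lambda}^S) \cong \left( \rad(\bP_I^{S^{0,\lambda}})/\rad^2(\bP_I^{S^{0,\lambda}}) \right) \otimes \bS_\lambda.
\]
This immediately shows that all composition factors here are of the form $\bC_{J,\lambda}^S \cong \bC_J^{S^{0,\lambda}}\otimes\bS_\lambda$ for the same $\lambda$, so the number of arrows out of $\bC_{I,\lambda}^S$ equals the number of arrows out of $\bC_I^{S^{0,\lambda}}$ in the quiver of $\H_{S^{0,\lambda}}(0)$, and every such arrow stays within $Q_\lambda(\bq)$.

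Next I would establish the two halves of the statement separately. For the claim that $Q_\lambda(\bq)\cong Q_{\H_{S^{0,\lambda}}(0)}$, the displayed isomorphism gives a bijection on vertices $\bC_{I,\lambda}^S \leftrightarrow \bC_I^{S^{0,\lambda}}$ (for $I\subseteq S^{0,\lambda}$) that preserves arrow multiplicities, which is exactly what an isomorphism of quivers requires. For the claim that there are no arrows between $Q_\lambda(\bq)$ and $Q_\mu(\bq)$ when $\lambda\ne\mu$, I would simply note that the composition factors of $\rad(\bP_{I,\lambda}^S)/\rad^2(\bP_{I,\lambda}^S)$ computed above are all indexed by the same $\lambda$, so no arrow can land on a vertex $\bC_{J,\mu}^S$ with $\mu\ne\lambda$; by Corollary~\ref{cor:RepLists} distinct pairs give non-isomorphic simples, so $\bC_{I,\lambda}^S\not\cong\bC_{J,\mu}^S$ and these are genuinely different vertices. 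Finally, the tensor-product description of $Q_{\H_{S^{0,\lambda}}(0)}$ follows from Proposition~\ref{prop:TensorP} and the recipe for the quiver of a tensor product of algebras recalled in Section~\ref{sec:prelim}, applied to the decomposition $\H_{S^{0,\lambda}}(0)\cong\bigotimes \H_C(0)$ over connected components $C$ of $S^{0,\lambda}$; this tensor-product formula is valid because the quivers of the $0$-Hecke factors are loopless by Lemma~\ref{lem:H0QuiverLoopless}.

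The main obstacle I anticipate is a subtle point in matching the intrinsic quiver of $\H_S(\bq)$ against that of $\H_{S^{0,\lambda}}(0)$: Proposition~\ref{prop:tensor}~(iii) computes radicals with respect to the tensor-product algebra $\H_{S^{0,\lambda}}(0)\otimes\CC W^1$, whereas $\rad$ in the quiver of $\H_S(\bq)$ is taken over $\H_S(\bq)$ itself. I would need to justify that these radical layers coincide, i.e.\ that the $\H_S(\bq)$-module structure on $\bP_{I,\lambda}^S$ restricts to the expected tensor-algebra structure and that the $\H_S(\bq)$-radical equals the tensor-algebra radical. This is precisely the content already used in the proof of Theorem~\ref{thm:decomp}: $T_s$ acts by $0$ for $s\in S^0\setminus S^{0,\lambda}$ and the surjection from the tensor algebra to the relevant quotient lets one invoke Proposition~\ref{prop:induce}~(ii) and Proposition~\ref{prop:surj} to transfer radical and top computations. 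Once that identification is in hand, the remaining verifications are bookkeeping, and the loopless hypothesis from Lemma~\ref{lem:H0QuiverLoopless} guarantees the tensor-product quiver formula applies cleanly.
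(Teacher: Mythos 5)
Your proposal is correct and follows essentially the same route as the paper: both compute $\rad(\bP_{I,\lambda}^S)/\rad^2(\bP_{I,\lambda}^S)$ via Proposition~\ref{prop:tensor}~(iii), observe that all composition factors there are indexed by the same $\lambda$ (giving both the vertex/arrow bijection with the quiver of $\H_{S^{0,\lambda}}(0)$ and the absence of arrows between $Q_\lambda(\bq)$ and $Q_\mu(\bq)$), and then invoke Lemma~\ref{lem:H0QuiverLoopless} to justify the tensor-product quiver formula over the connected components of $S^{0,\lambda}$. The subtlety you flag about identifying the $\H_S(\bq)$-radical with the tensor-algebra radical is handled exactly as you suggest, via the argument already made in the proof of Theorem~\ref{thm:decomp}.
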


\begin{proof}
Let $(I,\lambda)\in\Irr(\H_S(\bq))$.
Proposition~\ref{prop:tensor} (iii) implies that 
\[ \rad\left( \bP_{I,\lambda}^S \right) \Big/ \rad^2\left( \bP_{I,\lambda}^S \right) \cong 
\rad\left( \bP_I^{S^{0,\lambda}} \right) \Big/ \rad^2\left( \bP_I^{S^{0,\lambda}} \right) \otimes \bS_{\lambda}. \] 
Among the composition factors of this $\H_S(\bq)$-module, the multiplicity of a simple module $\bC_{J,\mu}^S$ with $(J,\mu)\in\Irr(\H_S(\bq))$ is either zero if $\lambda\ne\mu$, or equal to the multiplicity of $\bC_J^{S^{0,\lambda}}$ among the composition factors of $\rad \left( \bP_{I}^{S^{0,\lambda}} \right) \Big/ \rad^2 \left( \bP_{I}^{S^{0,\lambda}} \right)$ if $\lambda = \mu$. 
Therefore the full subquiver $Q_\lambda(\bq)$ is isomorphic to the quiver of the $0$-Hecke algebra $\H_{S^{0,\lambda}}(0)$, and there is no arrow between $Q_\lambda(\bq)$ and $Q_\mu(\bq)$ if $\lambda,\mu\in\Irr(\CC W^1)$ are distinct.
By Lemma~\ref{lem:H0QuiverLoopless},  the quivers of the $0$-Hecke algebras generated by connected components of $S^{0,\lambda}$ are all loopless, and the tensor product of these quivers gives the quiver of $\H_{S^{0,\lambda}}(0)$.
\end{proof}

An example will be given in the end of this section, after we determine the representation type of $\H_S(\bq)$ from its quiver.
Recall that Duchamp, Hivert, and Thibon~\cite[\S4.3]{NCSF-VI} constructed the quiver of the $0$-Hecke algebra $H_n(0)$ of type $A_{n-1}$ and showed that $H_n(0)$ is of finite representation type if and only if $n\le 3$.
In particular, the quiver of $\H_3(0)$ consists of three connected components, two of type $A_1$ and one of type $A_2$.
Using this observation we obtain the representation type of the $0$-Hecke algebra $\H_3(0)\otimes\H_3(0)$.

\begin{lemma}\label{lem:H3}
The algebra $\H_3(0)\otimes\H_3(0)$ is of infinite representation type.
\end{lemma}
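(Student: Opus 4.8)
The plan is to isolate a single algebra direct factor of $\H_3(0)\otimes\H_3(0)$ that is already of infinite representation type, and to detect its type by passing to a radical square zero quotient and reading off its separated quiver. First I would recall, following Duchamp, Hivert, and Thibon~\cite{NCSF-VI}, the structure of $\H_3(0)$: its four simple modules $\bC_\emptyset,\bC_{\{1\}},\bC_{\{2\}},\bC_{\{1,2\}}$ give a quiver with two isolated vertices, coming from the one-dimensional projectives $\bP_\emptyset$ and $\bP_{\{1,2\}}$, together with a single nontrivial component $C_0$ on the vertices $\bC_{\{1\}},\bC_{\{2\}}$. Since $\bP_{\{1\}}$ and $\bP_{\{2\}}$ are two-dimensional with radical square zero and socles $\bC_{\{2\}}$ and $\bC_{\{1\}}$ respectively, the component $C_0$ is the two-cycle $\bullet\rightleftarrows\bullet$, and the corresponding block $B$ of $\H_3(0)$ is the four-dimensional algebra given by this two-cycle modulo the ideal of length-two paths. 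As blocks are algebra direct factors and tensor product distributes over direct products of algebras, $B\otimes B$ is an algebra direct factor of $\H_3(0)\otimes\H_3(0)$; hence it suffices to prove that $B\otimes B$ is of infinite representation type.

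Next I would compute the quiver of $B\otimes B$. Because $C_0$ is loopless by Lemma~\ref{lem:H0QuiverLoopless}, the tensor product rule for quivers recalled in Section~\ref{sec:prelim} shows that the quiver of $B\otimes B$ is $C_0\otimes C_0$: a four-cycle on the vertices $(1,1),(2,1),(2,2),(1,2)$ in which every edge carries one arrow in each direction, so eight arrows in all. Since $B\otimes B$ is not radical square zero (indeed $\rad^2(B\otimes B)=\rad(B)\otimes\rad(B)\ne 0$ by Proposition~\ref{prop:tensor}), I would pass to the quotient $\Lambda:=(B\otimes B)/\rad^2(B\otimes B)$, which is radical square zero and has the same quiver $C_0\otimes C_0$. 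By Proposition~\ref{prop:surj}(iii) applied to the surjection $B\otimes B\twoheadrightarrow\Lambda$, it is then enough to show that $\Lambda$ is of infinite representation type.

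Finally I would invoke the separated quiver criterion for radical square zero algebras: such an algebra is of finite representation type if and only if its separated quiver is a disjoint union of Dynkin diagrams of type $A$, $D$, $E$ (see, e.g., \cite{ASS}). Replacing each of the eight arrows of $C_0\otimes C_0$ by an arrow from a source copy to a target copy of its endpoints yields a $2$-regular bipartite graph that splits into a disjoint union of two four-cycles, that is, two copies of the extended Dynkin diagram $\widetilde A_3$. Since a four-cycle is not a Dynkin diagram, $\Lambda$ is of infinite representation type, and therefore so are $B\otimes B$ and $\H_3(0)\otimes\H_3(0)$.

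I expect the main obstacle to be the passage from the algebra to a setting where representation type can actually be read off. Gabriel's theorem as stated applies only to hereditary path algebras, whereas $B\otimes B$ carries genuine relations and is not hereditary; the role of the radical square zero reduction together with the separated quiver is precisely to convert the question into a hereditary one. The one computation that must be carried out carefully is the separated quiver of $C_0\otimes C_0$: one has to track its eight arrows and verify that the bipartite graph obtained is $2$-regular and breaks into two $4$-cycles rather than, say, a single $8$-cycle or a tree. As a consistency check, the same procedure applied to $C_0$ alone produces two copies of $A_2$, recovering that $B$, and hence $\H_3(0)$, is of finite representation type.
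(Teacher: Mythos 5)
Your proof is correct, and it reaches the conclusion by a genuinely different final step than the paper. Both arguments begin the same way: identify the nontrivial component of the quiver of $\H_3(0)$ as the doubled edge $\bullet\rightleftarrows\bullet$ on $\bC_{\{1\}},\bC_{\{2\}}$, and use the tensor-product rule (legitimized by Lemma~\ref{lem:H0QuiverLoopless}) to see that the quiver of $\H_3(0)\otimes\H_3(0)$ contains a four-cycle, with one arrow in each direction along every edge, as a connected component. From there the paper \emph{deletes} four of the eight arrows: it chooses the alternating orientation of the four-cycle, observes that every length-two path of the doubled cycle passes through a deleted arrow, and so realizes the hereditary path algebra of an alternatingly oriented $\widetilde A_3$ as a quotient of $\H_3(0)\otimes\H_3(0)$; Gabriel's theorem and Proposition~\ref{prop:surj}~(iii) then finish. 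You instead \emph{keep} all eight arrows, pass to the radical-square-zero quotient $(B\otimes B)/\rad^2(B\otimes B)$, and invoke the separated-quiver criterion; your computation that the separated quiver splits into two disjoint copies of $\widetilde A_3$ is correct (sources $a,c$ pair with sinks $b',d'$ and sources $b,d$ with sinks $a',c'$), as is the consistency check recovering finite type for $\H_3(0)$ itself. Your route is more systematic — the separated-quiver theorem is a general-purpose tool and removes the slight sleight of hand in ``orienting'' a quiver that already carries arrows both ways — at the cost of invoking a heavier classical theorem; the paper's route is shorter and stays within the hereditary form of Gabriel's theorem already quoted in Section~\ref{sec:prelim}. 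Two small points: the separated-quiver criterion for radical-square-zero algebras is usually attributed to Auslander--Reiten--Smal\o\ (Chapter X of their book) rather than to the first volume of \cite{ASS}, so the citation should be adjusted; and your explicit identification of the block $B$ is not strictly needed, since Proposition~\ref{prop:surj}~(iii) applied to the projection onto that block already suffices, but it does no harm.
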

\begin{proof}
The quiver of $\H_3(0)\otimes\H_3(0)$ is the tensor product of the quiver of $\H_3(0)$ and itself, and thus contains a cycle of length four as a connected component.
Orienting this cycle in such a way that it has no directed path of length two, one gets a quiver whose path algebra is isomorphic to a quotient of $\H_3(0)\otimes\H_3(0)$ and of infinite representation type (cf. Duchamp--Hivert--Thibon~\cite[\S4.3]{NCSF-VI}).
Combining this with Proposition~\ref{prop:surj} (iii) we conclude that $\H_3(0)\otimes\H_3(0)$ is of infinite representation type.
\end{proof}

Now we can determine the representation type of the algebra $\H_S(\bq)$.

\begin{proposition}
The algebra $\H_S(\bq)$ is of finite representation type if and only if $|S_i|\le 2$ for all $i\in L_0$ with equality occurring at most once.
\end{proposition}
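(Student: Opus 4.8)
The plan is to read off the representation type from the quiver of $\H_S(\bq)$ given in Proposition~\ref{prop:quiver}, handling the two directions with different tools: explicit infinite-type quotients for the ``only if'' part, and Gabriel's theorem for the ``if'' part. The structural input I would use repeatedly is the surjection $\H_S(\bq) \twoheadrightarrow \H_{S^0}(0) \cong \bigotimes_{i \in L_0} \H_{S_i}(0)$ obtained from \eqref{eq:Hq} by applying the augmentation $\CC W^1 \to \CC$, $T_t \mapsto 1$ (under which each relation $T_sT_t - T_s$ maps to $0$); composing with the parabolic projection $\H_{S^0}(0) \twoheadrightarrow \H_R(0)$ (defined by $\pi_s \mapsto \pi_s$ for $s \in R$ and $\pi_s \mapsto 1$ for $s \in S^0 \setminus R$) shows that every $\H_R(0)$ with $R \subseteq S^0$ is a quotient of $\H_S(\bq)$. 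I would also record that, by Theorem~\ref{thm:basis}(1), the connected components of $S^0$ are exactly the $S_i$ with $i \in L_0$, and that $S^{0,\lambda}$ meets each $S_i$ in a subset, so every connected component of $S^{0,\lambda}$ sits inside a single $S_i$ and has at most $|S_i|$ vertices.

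For ``only if'' I would argue the contrapositive via Proposition~\ref{prop:surj}(iii): if the condition fails, I exhibit an infinite-type quotient. If some $S_i$ ($i \in L_0$) has $|S_i| \ge 3$, then $S_i$ is a tree on at least three vertices and so contains an induced path $A_3$; the corresponding parabolic $\H_R(0) \cong \H_4(0)$ is a quotient of $\H_S(\bq)$ and is of infinite type by Duchamp--Hivert--Thibon~\cite{NCSF-VI}. If instead two distinct $S_i,S_j$ ($i,j \in L_0$) have size $2$, then $R := S_i \sqcup S_j$ induces a diagram of type $A_2 \sqcup A_2$, so $\H_R(0) \cong \H_3(0) \otimes \H_3(0)$ is a quotient of $\H_S(\bq)$, of infinite type by Lemma~\ref{lem:H3}.

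For ``if'', assuming every $S_i$ ($i \in L_0$) has $|S_i| \le 2$ with at most one of size $2$, I would show the quiver $Q$ of $\H_S(\bq)$ is a disjoint union of type-$A_1$ and type-$A_2$ Dynkin diagrams. By Proposition~\ref{prop:quiver}, $Q$ is the disjoint union over $\lambda \in \Irr(\CC W^1)$ of copies of the quiver of $\H_{S^{0,\lambda}}(0)$, which is the tensor product of the loopless (Lemma~\ref{lem:H0QuiverLoopless}) quivers of the components of $S^{0,\lambda}$. Each such component has at most two vertices, hence is of type $A_1$ or $A_2$, and at most one is of type $A_2$ since at most one $S_i$ has size $2$. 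As the quiver of $\H_2(0)$ is two isolated vertices and that of $\H_3(0)$ is a disjoint union of Dynkin quivers $A_1, A_1, A_2$, and since tensoring with an arrowless quiver merely replicates the other factor, the resulting quiver is acyclic with each component Dynkin. I would then pass to the basic algebra $A^b \cong \CC Q / I$ (Morita equivalent to $\H_S(\bq)$, hence of the same type); since $Q$ is Dynkin the path algebra $\CC Q$ is finite-dimensional and of finite type by Gabriel's theorem, and $A^b$, being a quotient of $\CC Q$, is of finite type by Proposition~\ref{prop:surj}(iii), whence so is $\H_S(\bq)$.

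The hard part will be the ``if'' direction, and within it the combinatorial control ensuring that $Q$ is acyclic: an oriented cycle appears precisely when two quivers that each carry arrows are tensored together (the $4$-cycle of Lemma~\ref{lem:H3}), so the whole argument turns on the hypothesis forcing at most one type-$A_2$ tensor factor in each $\H_{S^{0,\lambda}}(0)$, with all other factors arrowless. A complementary subtlety I would keep in mind is that a non-Dynkin quiver need not force infinite type for a non-hereditary algebra, which is exactly why I route the ``only if'' direction through concrete infinite-type quotients rather than through the shape of $Q$.
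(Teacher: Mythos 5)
Your proof is correct and follows essentially the same route as the paper: the same two infinite-type obstructions ($\H_4(0)$ from an $A_3$ subpath of an $S_i$ with $|S_i|\ge 3$, and $\H_3(0)\otimes\H_3(0)$ from two size-two blocks, via Lemma~\ref{lem:H3} and Proposition~\ref{prop:surj}(iii)) for the ``only if'' direction, and the observation that every component of the relevant quiver is of type $A_1$ or $A_2$ for the ``if'' direction. The only minor difference is that for ``if'' the paper reads off the quiver of the covering algebra $\H_{S^0}(0)\otimes\CC W^1$ and then quotients down, whereas you apply Proposition~\ref{prop:quiver} to $\H_S(\bq)$ itself and pass through the basic algebra and Gabriel's theorem --- your version spells out more explicitly why a disjoint union of $A_1$ and $A_2$ quivers forces finite representation type.
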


\begin{proof}
Suppose $|S_i|\ge3$ for some $i\in L_0$.
Since $(W,S)$ is simply laced and $S_i$ is connected, there exists a subset $I\subseteq S_i$ which generates a Coxeter subsystem of type $A_3$.
The subalgebra of $\H_S(\bq)$ generated by the set $\{T_s: s\in I\}$ is isomorphic to the $0$-Hecke algebra $H_4(0)$, which is of infinite representation type by Duchamp--Hivert--Thibon~\cite[\S4.3]{NCSF-VI}.
This implies that $\H_S(\bq)$ is of infinite representation type, since every $\H_4(0)$-module becomes an $\H_S(\bq)$-module by letting all generators $T_s$ of $\H_S(\bq)$ with $s\in S\setminus I$ act by one and an indecomposable $\H_4(0)$-module is also an indecomposable $\H_S(\bq)$-module by Proposition~\ref{prop:induce} (i).

Next, assume $|S_i|=|S_{i'}|=2$ for distinct $i,i'\in L_0$.
Then $\H_3(0)\otimes\H_3(0)$ is a quotient of $\H_S(\bq)$.
It follows from Proposition~\ref{prop:surj} (iii) and  Lemma~\ref{lem:H3} that $\H_S(\bq)$ is of infinite representation type.

Finally, assume $|S_i|\le 2$ for all $i\in L_0$ with equality occurring at most once.
Since $\H_{m}(0)$ with $m\le 2$ and $\CC W_j$ with $j\in L_1$ are semisimple algebras, their quivers consist of isolated vertices.
By Duchamp--Hivert--Thibon~\cite[\S4.3]{NCSF-VI}, the quiver of $\H_3(0)$ consists of three connected components, two of type $A_1$ and one of type $A_2$.
Thus each connected component of the quiver of the algebra
\[ \H_S^0(0) \otimes \CC W^1 \cong \left( \bigotimes_{i\in L_0} \H_{S_i}(0) \right) \bigotimes \left( \bigotimes_{j\in L_1} \CC W_j \right) \]
is of type $A_1$ or $A_2$ by the definition of the tensor product of quivers.
Since $\H_S(\bq)$ is a quotient of the above algebra by the equation~\eqref{eq:Hq}, it follows from Proposition~\ref{prop:surj} (iii) that $\H_S(\bq)$ is of finite representation type.
\end{proof}

\begin{example}
By Proposition~\ref{prop:quiver}, the quiver of the algebra $\H(0^21^30^2)$ is the disjoint union of full subquivers $Q_\lambda$ indexed by partitions $\lambda$ of $4$.
If $\lambda=4$ then $Q_\lambda$ is the quiver of $\H_3(0)\otimes \H_3(0)$, which is the disjoint union of four isolated vertices, four paths of length two, and a cycle of length four.
If $\lambda\in\{(3,1), (2,2), (2,1,1), (1,1,1,1)\}$ then $Q_\lambda$ is the quiver of $\H_2(0)\otimes\H_2(0)$, which consists of four isolated vertices.
One sees that the algebra $\H(0^21^30^2)$ is of infinite representation type. 
\end{example}

\section{Induction and restriction}\label{sec:IndRes}

Let $\H_S(\bq)$ be the Hecke algebra of a finite simply-laced Coxeter system $(W,S)$ with independent parameters $\bq\in\{0,1\}^S$, and let $R\subseteq S$.
In this section we study the induction and restriction of representations between $\H_R(\bq)=\H_R(\bq|_R)$ and $\H_S(\bq)$, as there is an obvious algebra surjection from $\H_R(\bq)$ to the subalgebra of $\H_S(\bq)$ generated by $\{T_s:s\in R\}$ (which is not necessarily an isomorphism~\cite[\S3]{Hecke}).

By induction on $|R|$, we may assume $R=S\setminus\{s\}$ for some $s\in S$, without loss of generality.
We distinguish two cases ($q_s=0$ and $q_s=1$) in the next two subsections.
In each case our results exhibit a two-sided duality, i.e., both adjunctions \eqref{eq:dual1} and \eqref{eq:dual2} are true. 

\subsection{Case 1}\label{sec:IndRes1}
In this subsection we study the case $R=S\setminus\{s\}$ for some $s\in S$ with $q_s=0$. 
One sees that $R^0=S^0\setminus\{s\}$ and $R^1=S^1$. 
We first study induction from $\H_R(\bq)$ to $\H_S(\bq)$.

\begin{proposition}\label{prop:Ind1}
Suppose $R=S\setminus\{s\}$ for some $s\in S$ with $q_s=0$.
Let $(I,\lambda)\in\Irr(\H_R(\bq))$. Then 
\begin{align*}
\bP_{I,\lambda}^R \uparrow\,_{\H_R(\bq)}^{\H_S(\bq)} &\cong 
\begin{cases}
\bP_{I,\lambda}^S & \text{ if } s\notin S^{0,\lambda}, \\
\bP_{I,\lambda}^S \oplus \bP_{I\cup\{s\},\lambda}^S & \text{ if } s\in S^{0,\lambda}
\end{cases} 
\end{align*}
where each $\H_S(\bq)$-module on the right hand side is projective indecomposable.
Furthermore, if $w$ is any element of $W_{R^{0,\lambda}}$ with $D(w)=I$, then we have the following equality
\[ \bC_{I,\lambda}^R \uparrow\,_{\H_R(\bq)}^{\H_S(\bq)} =
\sum_{ \substack{ z\in W_{S^{0,\lambda}} \\ D(z^{-1})\subseteq\{s\} } } \bC_{D(wz),\lambda}^S \]
in the Grothendieck group $G_0(\H_S(\bq))$, where each $\H_S(\bq)$-module on the right hand side is simple.
\end{proposition}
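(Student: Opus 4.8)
The plan is to reduce both formulas to the representation theory of $0$-Hecke algebras through the tensor factorizations $\bP_{I,\lambda}^S\cong\bP_I^{S^{0,\lambda}}\otimes\bS_\lambda$ and $\bC_{I,\lambda}^S\cong\bC_I^{S^{0,\lambda}}\otimes\bS_\lambda$ of Theorem~\ref{thm:decomp}. Because $q_s=0$ we have $R^1=S^1$, so the Coxeter-group factor $\bS_\lambda$ is unaffected when passing between $\H_R(\bq)$ and $\H_S(\bq)$, and the only thing that changes lives on the $0$-Hecke side. The governing dichotomy is precisely whether $s\in S^{0,\lambda}$: if $s\notin S^{0,\lambda}$ then $R^{0,\lambda}=S^{0,\lambda}$, while if $s\in S^{0,\lambda}$ then $R^{0,\lambda}=S^{0,\lambda}\setminus\{s\}$. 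These two alternatives will produce the two cases of the statement.

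For the projective formula I would use Frobenius reciprocity. Restriction along $\phi\colon\H_R(\bq)\to\H_S(\bq)$ is exact, so its left adjoint (induction) preserves projectives; hence $\bP_{I,\lambda}^R\uparrow_{\H_R(\bq)}^{\H_S(\bq)}$ is projective and is determined up to isomorphism by its class in $K_0$, i.e. by the multiplicities $a_{J,\mu}=\langle\bP_{I,\lambda}^R\uparrow_{\H_R(\bq)}^{\H_S(\bq)},\bC_{J,\mu}^S\rangle$. By~\eqref{eq:dual1} this equals $\langle\bP_{I,\lambda}^R,\bC_{J,\mu}^S\downarrow_{\H_R(\bq)}^{\H_S(\bq)}\rangle$. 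The decisive input is that restriction of a simple module is again simple: combining $\bC_{J,\mu}^S\cong\bC_J^{S^{0,\mu}}\otimes\bS_\mu$ with the $0$-Hecke restriction rule~\eqref{eq:H0Res} (and the invariance of $\bS_\mu$ coming from $R^1=S^1$) yields $\bC_{J,\mu}^S\downarrow_{\H_R(\bq)}^{\H_S(\bq)}\cong\bC_{J\cap R^{0,\mu},\mu}^R$. Thus $a_{J,\mu}=\delta_{\mu,\lambda}\,\delta_{J\cap R^{0,\lambda},I}$, and listing the $J\subseteq S^{0,\lambda}$ with $J\cap R^{0,\lambda}=I$ gives $J=I$ when $s\notin S^{0,\lambda}$ and $J\in\{I,I\cup\{s\}\}$ when $s\in S^{0,\lambda}$, exactly the claimed direct sums.

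For the simple formula I would establish a factorization lemma: for every $\H_{R^{0,\lambda}}(0)$-module $\bM$, viewed as an $\H_R(\bq)$-module $\bM\otimes\bS_\lambda$, there is an isomorphism
\[ (\bM\otimes\bS_\lambda)\uparrow_{\H_R(\bq)}^{\H_S(\bq)}\cong\bigl(\bM\uparrow_{\H_{R^{0,\lambda}}(0)}^{\H_{S^{0,\lambda}}(0)}\bigr)\otimes\bS_\lambda. \]
Applying this to $\bM=\bC_I^{R^{0,\lambda}}$ and invoking the $0$-Hecke induction rule~\eqref{eq:H0Ind} gives $\bC_I^{R^{0,\lambda}}\uparrow_{\H_{R^{0,\lambda}}(0)}^{\H_{S^{0,\lambda}}(0)}=\sum_z\bC_{D(wz)}^{S^{0,\lambda}}$, the sum running over minimal-length coset representatives $z$, which are the $z\in W_{S^{0,\lambda}}$ with $D(z^{-1})\subseteq S^{0,\lambda}\setminus R^{0,\lambda}\subseteq\{s\}$; this matches the index set $\{z\in W_{S^{0,\lambda}}:D(z^{-1})\subseteq\{s\}\}$ of the statement. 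Tensoring with $\bS_\lambda$ turns $\bC_{D(wz)}^{S^{0,\lambda}}$ into $\bC_{D(wz),\lambda}^S$, giving the asserted identity in $G_0(\H_S(\bq))$; when $s\notin S^{0,\lambda}$ the descent constraint forces $z=1$ and the sum collapses to $\bC_{I,\lambda}^S$.

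The hard part will be the factorization lemma. The mechanism I expect to use is that every module of the form $\bullet\otimes\bS_\lambda$ factors through the quotient of $\H_S(\bq)$ killing all $T_u$ with $u\in S^0\setminus S^{0,\lambda}$, and that on this quotient $\H_S(\bq)$ acts through $\H_{S^{0,\lambda}}(0)\otimes\mathrm{End}_\CC(\bS_\lambda)$; the matrix factor $\mathrm{End}_\CC(\bS_\lambda)$ then provides a Morita equivalence under which $\H_S(\bq)$-induction becomes $0$-Hecke induction tensored with $\bS_\lambda$. Two points need care. First, the induced module $\H_S(\bq)\otimes_{\H_R(\bq)}(\bM\otimes\bS_\lambda)$ must be shown to remain $\bS_\lambda$-isotypic over $\CC W^1$ and to be annihilated by each $T_u$ with $u\in S^0\setminus S^{0,\lambda}$; for the latter I would use $T_u\,\sigma(W_j)=|W_j|\,T_u$ (Lemma~\ref{lem:01=0}) together with $\sigma(W_j)\bS_\lambda=0$, valid since $\bS_{\lambda^j}\subseteq\sigma(W_j)^\perp$ by~\eqref{eq:DecompCG}. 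Second, one must check that induction over $\H_S(\bq)$, once restricted to these module categories, genuinely coincides with induction over the quotient algebras. I anticipate that the most delicate bookkeeping is verifying that the quotient is exactly $\H_{S^{0,\lambda}}(0)\otimes\mathrm{End}_\CC(\bS_\lambda)$, which requires handling the relations $T_vT_t=T_v$ between $S^{0,\lambda}$ and the trivial constituents of $W^1$ together with the accompanying signs $c_t$.
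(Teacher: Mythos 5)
Your proposal is correct, and its first half takes a genuinely different route from the paper. For the projective formula the paper computes $\bP_{I,\lambda}^R\uparrow\,_{\H_R(\bq)}^{\H_S(\bq)}$ directly, via the factorization $\left(\bP_I^{R^{0,\lambda}}\otimes\bS_\lambda\right)\uparrow\,_{\H_R(\bq)}^{\H_S(\bq)}\cong\left(\bP_I^{R^{0,\lambda}}\uparrow\,_{\H_{R^{0,\lambda}}(0)}^{\H_{S^{0,\lambda}}(0)}\right)\otimes\bS_\lambda$ followed by the $0$-Hecke induction rule~\eqref{eq:H0Ind}, whereas you deduce it indirectly from Frobenius reciprocity~\eqref{eq:dual1}, the fact that induction preserves projectives, and the restriction rule for simples. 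Your route buys something real: it sidesteps proving that induction commutes with $-\otimes\bS_\lambda$ for the projective modules, replacing it with the much easier computation of $\bC_{J,\mu}^S\downarrow\,_{\H_R(\bq)}^{\H_S(\bq)}$ (restriction of a module of the form $\bC_J^{S^{0,\mu}}\otimes\bS_\mu$ is transparent because one only restricts an explicitly known action). The cost is a forward dependency on the simple-module half of Proposition~\ref{prop:Res1}, which is harmless since that proof does not use Proposition~\ref{prop:Ind1}, but you should state the dependency. Your multiplicity bookkeeping $a_{J,\mu}=\delta_{\mu,\lambda}\,\delta_{J\cap R^{0,\lambda},I}$ and the enumeration of admissible $J$ are exactly right, and your index-set identification $\{z\in W_{S^{0,\lambda}}:D(z^{-1})\subseteq\{s\}\}={}^{R^{0,\lambda}}W_{S^{0,\lambda}}$ for the simple formula matches the paper.

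For the simple formula you are on the paper's path: everything reduces to the factorization lemma $(\bM\otimes\bS_\lambda)\uparrow\,_{\H_R(\bq)}^{\H_S(\bq)}\cong\bigl(\bM\uparrow\,_{\H_{R^{0,\lambda}}(0)}^{\H_{S^{0,\lambda}}(0)}\bigr)\otimes\bS_\lambda$, which the paper obtains from the structure equation~\eqref{eq:Hq} and Lemma~\ref{lem:PS} rather than from a Morita equivalence. Your sketch of that lemma is plausible but incomplete at exactly the point you flag: identifying $\H_S(\bq)\otimes_{\H_R(\bq)}N$ with induction between the quotient algebras requires knowing that the \emph{left} ideal of $\H_S(\bq)$ generated by the images of the annihilating elements of $\H_R(\bq)$ (the $T_u$ with $u\in R^0\setminus R^{0,\lambda}$ and $1-e_\lambda$) is already two-sided, which is not automatic and must be checked using the centrality of $e_\lambda$ and the relations $T_uT_t=T_u$. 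The annihilation arguments you propose do work, but note that moving $\sigma(W_j)$ across an arbitrary element of $\H_S(\bq)$ uses that $\sigma(W_j)$ is central in $\H_S(\bq)=\H^0(\bq)\H^1(\bq)$ (it is central in $\CC W_j$ and the two subalgebras commute), not merely Lemma~\ref{lem:01=0}. If you prefer to avoid Morita theory altogether, the more economical route --- and the one implicit in the paper --- is to write $\bP_{I,\lambda}^R=\H_R(\bq)e$ for a primitive idempotent $e$, so that the induced module is the left ideal $\H_S(\bq)\phi(e)$, and to compute its basis directly from Theorem~\ref{thm:basis}.
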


\begin{proof}
By the structure~\eqref{eq:Hq} of the algebra $\H_S(\bq)$ and Lemma~\ref{lem:PS}, we have
\[ \bP_{I,\lambda}^R \uparrow\,_{\H_R(\bq)}^{\H_S(\bq)} \cong  
\left( \bP_I^{R^{0,\lambda}} \otimes \bS_\lambda \right) \uparrow\,_{\H_R(\bq)}^{\H_S(\bq)}
\cong 
\left( \bP_I^{R^{0,\lambda}} \uparrow\,_{\H_{R^{0,\lambda}}(0)}^{\H_{S^{0,\lambda}}(0)} \right) \otimes \bS_\lambda. \]

First assume $s\notin S^{0,\lambda}$. 
Then we have $S^{0,\lambda} = R^{0,\lambda}$ which implies $(I,\lambda)\in \Irr(\H_S(\bq))$, and 
\[ \bP_{I,\lambda}^R \uparrow\,_{\H_R(\bq)}^{\H_S(\bq)} \cong  
\bP_I^{S^{0,\lambda}} \otimes \bS_\lambda \cong \bP_{I,\lambda}^S. \]

Next assume $s\in S^{0,\lambda}$. 
Then $S^{0,\lambda} = R^{0,\lambda} \sqcup \{s\}$, which implies that $(I,\lambda)$ and $(I\cup\{s\},\lambda)$ are both in $\Irr(\H_S(\bq))$. 
By the induction formula~\eqref{eq:H0Ind} for $0$-Hecke modules, we have
\[ \bP_{I,\lambda}^R \uparrow\,_{\H_R(\bq)}^{\H_S(\bq)} \cong \left( \bP_I^{S^{0,\lambda}} \oplus \bP_{I\cup\{s\}}^{S^{0,\lambda}} \right) \otimes \bS_\lambda 
\cong \bP_{I,\lambda}^S  \oplus \bP_{I\cup\{s\},\lambda}^S. \]

Now let $w$ be an element of $W_{R^{0,\lambda}}$ with $D(w)=I$. 
By the induction formula~\eqref{eq:H0Ind} for $0$-Hecke modules,
\begin{align*}
\bC_{I,\lambda}^R \uparrow\,_{\H_R(\bq)}^{\H_S(\bq)} & \cong 
\left( \bC_I^{R^{0,\lambda}} \otimes \bS_\lambda \right) \uparrow\,_{\H_R(\bq)}^{\H_S(\bq)} \\
& \cong \left( \bC_I^{R^{0,\lambda}} \uparrow\,_{\H_{R^{0,\lambda}}(0)}^{\H_{S^{0,\lambda}}(0)} \right) \otimes \bS_\lambda  \\
& = \sum_{ \substack{ z\in W_{S^{0,\lambda}} \\ D(z^{-1})\subseteq\{s\} }} \bC_{D(wz)}^{S^{0,\lambda}} \otimes \bS_\lambda
\end{align*}
where the last sum holds in the Grothendieck group $G_0(\H_S(\bq))$.
Each summand $\bC_{D(wz)}^{S^{0,\lambda}} \otimes \bS_\lambda$ is isomorphic to the simple $\H_S(\bq)$-module indexed by $(D(wz),\lambda)\in\Irr(\H_S(\bq))$ since $D(wz)\subseteq S^{0,\lambda}$. 
\end{proof}

\begin{example}
(i) Let $\bq=(0^2,1^1,0^3,1^2)$, $\bq_1=(0^2,1^1,0^2)$ and $\bq_2=(0^0,1^2)$.
Then
\[ \left(\bP_{(3),[2],(1,2)} \otimes \bP_{(1),[2,1]} \right) \uparrow\,_{\H(\bq_1)\otimes\H(\bq_2)}^{\H(\bq)} 
\cong \bP_{(3), [2],(1,2),[2,1]} \qand \]
\[ \left(\bP_{(3),[2],(1,2)} \otimes \bP_{(1),[3]} \right) \uparrow\,_{\H(\bq_1)\otimes\H(\bq_2)}^{\H(\bq)} 
\cong \bP_{(3),[2],(1,3),[3]} \oplus \bP_{(3),[2],(1,2,1),[3]}. \]
(ii) Let $\bq=(0^2,1^1,0^3,1^2)$, $\bq_1=(0^2,1^1,0^1)$ and $\bq_2=(0^1,1^2)$.
Since $21\shuffle 1 = \{213,231,321\}$, we have
\[ \left(\bC_{(3), [2],(1,1)} \otimes \bC_{(1),[2,1]} \right) \uparrow\,_{\H(\bq_1)\otimes\H(\bq_2)}^{\H(\bq)} 
\cong \bC_{(3), [2], (1,2), [2,1]} \oplus \bC_{(3), [2], (2,1), [2,1]} \oplus \bC_{(3), [2], (1,1,1), [2,1]}.\]
\end{example}

Now we study restriction of $\H_S(\bq)$-modules to $\H_R(\bq)$.

\begin{proposition}\label{prop:Res1}
Suppose $R=S\setminus\{s\}$ for some $s\in S$ with $q_s=0$.
Let $(I,\lambda)\in\Irr(\H_S(\bq))$. Then 
\[ \bP_{I,\lambda}^S \downarrow\,_{\H_R(\bq)}^{\H_S(\bq)} \cong \bigoplus_{K\in\, I\,\downarrow\,_{R^{0,\lambda}}^{S^{0,\lambda}}} \bP_{K,\lambda}^R \]
where each direct summand is a projective indecomposable $\H_R(\bq)$-module.
Furthermore, we have 
\[ \bC_{I,\lambda}^S \downarrow\,_{\H_R(\bq)}^{\H_S(\bq)} \cong \bC_{I\cap R^{0,\lambda},\lambda}^R \]
where the right hand side is a simple $\H_R(\bq)$-module. 
\end{proposition}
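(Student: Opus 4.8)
The plan is to mirror the proof of Proposition~\ref{prop:Ind1}, pushing the restriction onto the $0$-Hecke factor while treating $\bS_\lambda$ as a spectator. Since $q_s=0$ we have $R^1=S^1$, so the index set $L_1^\lambda$ is unchanged and hence $R^{0,\lambda}=S^{0,\lambda}\setminus\{s\}$; in particular $\bS_\lambda$ is a $\CC W^1=\CC R^1$-module left untouched by restriction. By Lemma~\ref{lem:PS}(ii) and Theorem~\ref{thm:decomp} I would start from the factorizations $\bP_{I,\lambda}^S\cong\bP_I^{S^{0,\lambda}}\otimes\bS_\lambda$ and $\bC_{I,\lambda}^S\cong\bC_I^{S^{0,\lambda}}\otimes\bS_\lambda$.

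The core step is to establish the factoring isomorphism $\bigl(\bP_I^{S^{0,\lambda}}\otimes\bS_\lambda\bigr)\downarrow\,_{\H_R(\bq)}^{\H_S(\bq)}\cong\bigl(\bP_I^{S^{0,\lambda}}\downarrow\,_{\H_{R^{0,\lambda}}(0)}^{\H_{S^{0,\lambda}}(0)}\bigr)\otimes\bS_\lambda$, and likewise for the simple module. For this I would read off the $\H_R(\bq)$-action directly from the structure~\eqref{eq:Hq}: the generators $T_t$ with $t\in S^1=R^1$ act on $\bS_\lambda$ exactly as before; the generators $T_r$ with $r\in R^{0,\lambda}$ act on the $0$-Hecke factor via the restricted action of $\H_{R^{0,\lambda}}(0)$; and, by Lemma~\ref{lem:PS}(i) applied to $R$, the remaining generators $T_r$ with $r\in R^0\setminus R^{0,\lambda}$ annihilate the whole module, which matches the structure of $\bP_{K,\lambda}^R$. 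This handles the two subcases uniformly: $s\notin S^{0,\lambda}$, where the $0$-Hecke restriction is the identity so that $I\downarrow_{R^{0,\lambda}}^{S^{0,\lambda}}=\{I\}$, and $s\in S^{0,\lambda}$, a genuine restriction dropping the generator $\pi_s=T_s$.

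With the factoring in hand, the statement follows by feeding in the $0$-Hecke formulas~\eqref{eq:H0Res}, namely $\bP_I^{S^{0,\lambda}}\downarrow=\bigoplus_{K\in I\downarrow_{R^{0,\lambda}}^{S^{0,\lambda}}}\bP_K^{R^{0,\lambda}}$ and $\bC_I^{S^{0,\lambda}}\downarrow=\bC_{I\cap R^{0,\lambda}}^{R^{0,\lambda}}$. Tensoring each summand with $\bS_\lambda$ and invoking Lemma~\ref{lem:PS}(ii) for $R$ reassembles them as $\bP_{K,\lambda}^R$, where every such $K$ satisfies $K\subseteq R^{0,\lambda}$ so that $(K,\lambda)\in\Irr(\H_R(\bq))$, and as $\bC_{I\cap R^{0,\lambda},\lambda}^R$, which is simple because $I\cap R^{0,\lambda}\subseteq R^{0,\lambda}$.

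The main obstacle I anticipate is justifying the factoring isomorphism rather than the bookkeeping: one must argue that restriction genuinely commutes with the tensor decomposition despite the mixing relations $T_sT_t=T_s$ built into~\eqref{eq:Hq}. A delicate point worth stressing is the \emph{projectivity} claim, since restrictions of projective $\H_S(\bq)$-modules need not be projective in general, so the assertion that each summand is projective indecomposable is not automatic. It holds here precisely because the $0$-Hecke restriction yields projective indecomposables $\bP_K^{R^{0,\lambda}}$, and tensoring with the semisimple module $\bS_\lambda$ returns, via Theorem~\ref{thm:decomp} applied to $R$, the projective indecomposable $\H_R(\bq)$-module $\bP_{K,\lambda}^R$.
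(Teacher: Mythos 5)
Your proposal is correct and follows essentially the same route as the paper: factor $\bP_{I,\lambda}^S\cong\bP_I^{S^{0,\lambda}}\otimes\bS_\lambda$ via Lemma~\ref{lem:PS}, push the restriction onto the $0$-Hecke factor using the structure~\eqref{eq:Hq} (noting $R^1=S^1$ so $\bS_\lambda$ is a spectator), apply the formulas~\eqref{eq:H0Res}, and reassemble each summand as $\bP_{K,\lambda}^R$ or $\bC_{I\cap R^{0,\lambda},\lambda}^R$. Your added remarks on why the factoring isomorphism commutes with restriction and why projectivity survives here (unlike in Case 2) are correct elaborations of steps the paper leaves implicit.
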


\begin{proof}
By the structure~\eqref{eq:Hq} of $\H_S(\bq)$ and the restriction formula~\eqref{eq:H0Res} for $0$-Hecke modules, we have
\begin{align*}
\bP_{I,\lambda}^S \downarrow\,_{\H_R(\bq)}^{\H_S(\bq)} 
& \cong \left( \bP_I^{S^{0,\lambda}} \otimes \bS_\lambda \right) \downarrow\,_{\H_R(\bq)}^{\H_S(\bq)} \\
& \cong \left( \bP_I^{S^{0,\lambda}} \downarrow\,_{\H_{R^{0,\lambda}}(0)}^{\H_{S^{0,\lambda}}(0)} \right) \otimes \bS_\lambda \\
& \cong \bigoplus_{K\in\, I\,\downarrow\,_{R^{0,\lambda}}^{S^{0,\lambda}}} \bP_K^{R^{0,\lambda}} \otimes \bS_\lambda.
\end{align*}
For each $K\in I\,\downarrow\,_{R^{0,\lambda}}^{S^{0,\lambda}}$, one sees that $(K,\lambda)\in\Irr(\H_R(\bq))$ since $K\subseteq R^{0,\lambda}$, and that $\bP_K^{R^{0,\lambda}} \otimes \bS_\lambda$ is isomorphic to the projective indecomposable $\H_R(\bq)$-module $\bP_{K,\lambda}^R$ by Lemma~\ref{lem:PS}. 

Similarly we have
\begin{align*}
\bC_{I,\lambda}^S \downarrow\,_{\H_R(\bq)}^{\H_S(\bq)} 
& \cong \left( \bC_I^{S^{0,\lambda}} \otimes \bS_\lambda \right) \downarrow\,_{\H_R(\bq)}^{\H_S(\bq)} \\
& \cong \left( \bC_I^{S^{0,\lambda}} \downarrow\,_{\H_{R^{0,\lambda}}(0)}^{\H_{S^{0,\lambda}}(0)} \right) \otimes \bS_\lambda \\
& \cong \bC_{I\cap R^{0,\lambda}}^{R^{0,\lambda}} \otimes \bS_\lambda
\end{align*}
where the last term is isomorphic to the simple $\H_R(\bq)$-module indexed by $(I\cap R^{0,\lambda},\lambda)\in\Irr(\H_R(\bq))$.
\end{proof}

\begin{example}
Let $\bq=(0^2,1^2,0^4,1^3)$, $\bq_1=(0^2,1^2,0^2)$, and $\bq_2=(0^1,1^3)$. 
We have 
\[ \bP_{(3),[2,1],(1,2),[2,1,1]} \downarrow\,_{\H(\bq_1)\otimes\H(\bq_2)}^{\H(\bq)} 
\cong \left( \bP_{(3),[2,1],(1,1)} \otimes \bP_{(1),[2,1,1]} \right)
\oplus \left( \bP_{(3),[2,1],(2)} \otimes \bP_{(1),[2,1,1]} \right) \]
since $(1,2)\downarrow_2\ = \{ ((1,1),(1)), ((2),(1)) \}$~\cite[Proposition~4.5]{H0Tab}.
We also have 
\[ \bC_{(3),[2,1],(1,2),[2,1,1]} \downarrow\,_{\H(\bq_1)\otimes\H(\bq_2)}^{\H(\bq)} 
\cong \bC_{(3),[2,1],(1,1)} \otimes \bC_{(1),[2,1,1]} \]
since $(1,2)_{\le 2}=(1,1)$ and $(1,2)_{>2}=(1)$.
\end{example}

\begin{corollary}\label{cor:dual1}
Suppose $R=S\setminus\{s\}$ for some $s\in S$ with $q_s=0$.
If $(I,\lambda)\in\Irr(\H_R(\bq))$ and $(J,\mu)\in\Irr(\H_S(\bq))$ then
\[ \left\langle \bP_{I,\lambda}^R \uparrow\,_{\H_R(\bq)}^{\H_S(\bq)},\ \bC_{J,\mu} \right\rangle
= \left\langle \bP_{I,\lambda}^R,\ \bC_{J,\mu} \downarrow\,_{\H_R(\bq)}^{\H_S(\bq)} \right\rangle, \]
\[ \left\langle \bP_{J,\mu}^S \downarrow\,_{\H_R(\bq)}^{\H_S(\bq)},\ \bC_{I,\lambda} \right\rangle
= \left\langle \bP_{J,\mu}^S,\ \bC_{I,\lambda} \uparrow\,_{\H_R(\bq)}^{\H_S(\bq)} \right\rangle, \]
\end{corollary}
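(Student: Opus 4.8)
The plan is to deduce both identities directly from the explicit induction and restriction formulas of Propositions~\ref{prop:Ind1} and \ref{prop:Res1}, combined with the elementary pairing rule \eqref{eq:hom}. The starting observation is that $\top(\bP_{I,\lambda}^S)\cong \bC_{I,\lambda}^S$ by Theorem~\ref{thm:decomp}, so that $\langle \bP_{I,\lambda},\bC_{J,\mu}\rangle = \delta_{(I,\lambda),(J,\mu)}$; this pairing extends linearly to the duality between $K_0(\H_S(\bq))$ and $G_0(\H_S(\bq))$, so that the right-hand side of the second identity, in which $\bC_{I,\lambda}\uparrow$ is only defined as a Grothendieck class, makes sense and simply counts the multiplicity of $\bC_{J,\mu}^S$ in that class.

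First I would record that, since $R=S\setminus\{s\}$ with $q_s=0$, we have $R^1=S^1$, so $W^1$ is unchanged and the label runs over the same set $\Irr(\CC W^1)$ on both sides. Because every module in sight is of the form $(\text{$0$-Hecke module})\otimes\bS_\lambda$ and distinct $\bS_\lambda,\bS_\mu$ are orthogonal, both sides of each identity vanish unless $\lambda=\mu$. When $\lambda=\mu$ the factor $\bS_\lambda$ strips off and every pairing reduces to the corresponding pairing of $0$-Hecke modules over $\H_{R^{0,\lambda}}(0)\subseteq \H_{S^{0,\lambda}}(0)$, where $R^{0,\lambda}=S^{0,\lambda}\setminus\{s\}$ if $s\in S^{0,\lambda}$ and $R^{0,\lambda}=S^{0,\lambda}$ otherwise.

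For the first identity I would substitute the induction formula of Proposition~\ref{prop:Ind1} into the left-hand side and the restriction formula of Proposition~\ref{prop:Res1} into the right-hand side. With $\lambda=\mu$ fixed, the left-hand side equals $\delta_{I,J}$ when $s\notin S^{0,\lambda}$ and $\delta_{I,J}+\delta_{I\cup\{s\},J}$ when $s\in S^{0,\lambda}$, while the right-hand side equals $\delta_{I,\,J\cap R^{0,\lambda}}$; a one-line check on whether $s\in J$ shows these agree. This is precisely the first line of the $0$-Hecke two-sided duality \eqref{eq:H0Duality}, with the roles of $S$ and $J$ there played by $S^{0,\lambda}$ and $R^{0,\lambda}$. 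The same reduction identifies the two sides of the second identity with $\langle \bP_J^{S^{0,\lambda}}\downarrow, \bC_I^{R^{0,\lambda}}\rangle$ and $\langle \bP_J^{S^{0,\lambda}}, \bC_I^{R^{0,\lambda}}\uparrow\rangle$, which are equal by the second line of \eqref{eq:H0Duality}.

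The computations are routine; the only delicate points are bookkeeping, namely keeping the index sets $S^{0,\lambda}$ and $R^{0,\lambda}$ straight across the case $s\in S^{0,\lambda}$ versus $s\notin S^{0,\lambda}$, and interpreting the pairing against the virtual module $\bC_{I,\lambda}\uparrow$ via the $K_0$--$G_0$ duality rather than literally as $\dim_\FF\Hom$. I expect the main conceptual step to be recognizing that, because $R^1=S^1$, the whole duality collapses onto the already-established $0$-Hecke duality \eqref{eq:H0Duality}, after which nothing beyond the $\delta$-bookkeeping remains.
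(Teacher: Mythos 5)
Your proposal is correct and follows essentially the same route as the paper, whose proof is a one-line citation of Proposition~\ref{prop:Ind1}, Proposition~\ref{prop:Res1}, and the $0$-Hecke duality~\eqref{eq:H0Duality}; you simply carry out the $\delta$-bookkeeping that the paper leaves implicit. The case analysis on $s\in S^{0,\lambda}$ versus $s\notin S^{0,\lambda}$ and the reduction to $R^{0,\lambda}\subseteq S^{0,\lambda}$ check out.
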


\begin{proof}
This follows from Proposition~\ref{prop:Ind1}, Proposition~\ref{prop:Res1}, and the two-sided duality~\eqref{eq:H0Duality} for $0$-Hecke modules.
\end{proof}

\subsection{Case 2}
Now we study the case $R=S\setminus\{t\}$ for some $t\in S$ with $q_t=1$.
One sees that $R^0=S^0$ and $R^1=S^1\setminus\{t\}$. 
We will also need the following lemma.

\begin{lemma}\label{lem:RS}
Suppose $c_\mu^\lambda\ne0$ for some $\lambda\in\Irr(\H_S(\bq))$ and some $\mu\in\Irr(\H_R(\bq))$.
Then $S^{0,\lambda}\subseteq R^{0,\mu}$.
\end{lemma}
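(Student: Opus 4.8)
The plan is to reduce the statement to a claim about which generators of $R^1$ act nontrivially, and then to settle that claim component by component using Lemma~\ref{lem:RestrictOne}. First I would record the structure of this case: since $q_t=1$ we have $R^0=S^0$, so $S^{0,\lambda}$ and $R^{0,\mu}$ are both subsets of the same set $S^0$, and $W^1_R:=\langle R^1\rangle=\langle S^1\setminus\{t\}\rangle$ is the parabolic subgroup of $W^1$ obtained by deleting $t$. By definition $s\in S^{0,\lambda}$ iff $m_{sr}=2$ for every $r$ lying in some $S_j$ with $j\in L_1^\lambda$, and $s\in R^{0,\mu}$ iff $m_{sr}=2$ for every $r$ lying in a connected component of $R^1$ on which $\mu$ acts nontrivially. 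Hence it suffices to prove that every connected component of $R^1$ on which $\mu$ acts nontrivially is contained in $\bigcup_{j\in L_1^\lambda}S_j$: once this is known, the defining condition for $S^{0,\lambda}$ forces the weaker defining condition for $R^{0,\mu}$, giving $S^{0,\lambda}\subseteq R^{0,\mu}$.

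To prove the claim, let $t\in S_{j_0}$ and write $\bS_\lambda=\bigotimes_{j\in L_1}\bS_{\lambda^j}$ with $\lambda^j\in\Irr(\CC W_j)$. Since $W^1_R=\bigl(\prod_{j\ne j_0}W_j\bigr)\times\langle S_{j_0}\setminus\{t\}\rangle$, the restriction factors as a tensor product, and by Proposition~\ref{prop:tensor}~(ii) every simple constituent $\bS_\mu$ of $\bS_\lambda\downarrow_{W^1_R}^{W^1}$ decomposes as $\bigl(\bigotimes_{j\ne j_0}\bS_{\lambda^j}\bigr)\otimes\bS_{\mu^{j_0}}$, where $\bS_{\mu^{j_0}}$ is a constituent of $\bS_{\lambda^{j_0}}\downarrow_{\langle S_{j_0}\setminus\{t\}\rangle}^{W_{j_0}}$ (the hypothesis $c_\mu^\lambda\ne0$ says precisely that such a constituent exists). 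For a component $C$ of $R^1$ with $C\subseteq S_j$, $j\ne j_0$, the factor of $\mu$ on $C$ equals $\lambda^j$, so $\mu$ acts nontrivially on $C$ only if $\lambda$ acts nontrivially on $W_j$, i.e.\ $j\in L_1^\lambda$ and $C\subseteq S_j\subseteq\bigcup_{j\in L_1^\lambda}S_j$. For a component $C$ of $S_{j_0}\setminus\{t\}$, if $j_0\notin L_1^\lambda$ then $\bS_{\lambda^{j_0}}$ is trivial, so Lemma~\ref{lem:RestrictOne}, applied to $\langle S_{j_0}\setminus\{t\}\rangle\le W_{j_0}$, shows $\bS_{\mu^{j_0}}$ is trivial, whence $\mu$ acts trivially on $C$; contrapositively, if $\mu$ acts nontrivially on $C$ then $j_0\in L_1^\lambda$ and $C\subseteq S_{j_0}\subseteq\bigcup_{j\in L_1^\lambda}S_j$. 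This establishes the claim, and hence the lemma.

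I expect the main obstacle to be organizational rather than conceptual: when $t$ is deleted the component $S_{j_0}$ may break into several connected pieces, so the partition attached to $R$ by Theorem~\ref{thm:basis} refines the one attached to $S$, and one must keep the indexing set for $R$ aligned correctly against $L_1^\lambda$ for $S$. The representation-theoretic content, by contrast, is entirely captured by Lemma~\ref{lem:RestrictOne}: the only way $\mu$ can be nontrivial on a piece of $S_{j_0}$ is for $\lambda$ to be nontrivial on all of $W_{j_0}$, because the restriction of a trivial representation stays trivial.
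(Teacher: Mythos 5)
Your proof is correct and takes essentially the same route as the paper: the paper's (three-line) proof likewise reduces the lemma to showing that every component of $R^1$ on which $\mu$ acts nontrivially is one on which $\lambda$ acts nontrivially, with Lemma~\ref{lem:RestrictOne} supplying the key fact that restrictions of trivial representations stay trivial. The only difference is that you handle explicitly the indexing subtlety you flag at the end (the component $S_{j_0}$ containing $t$ splitting upon deletion of $t$), which the paper's proof glosses over by writing $L_1^\mu\subseteq L_1^\lambda$ as if the two index sets were directly comparable.
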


\begin{proof}
Suppose $j\in L_1^\mu$, that is, $W_j$ acts nontrivially on $\bS_\mu$ for some $j\in L_1$.
Since $c_\mu^\lambda\ne0$, we have $W_j$ acts nontrivially on $\bS_\lambda$ by Lemma~\ref{lem:RestrictOne}, i.e., $j\in L_1^\lambda$.
Thus $L_1^\mu\subseteq L_1^\lambda$, which implies $S^{0,\lambda}\subseteq R^{0,\mu}$.
\end{proof}

We are ready to give the formulas for induction of $\H_R(\bq)$-modules to $\H_S(\bq)$.

\begin{proposition}\label{prop:Ind2}
Suppose $R=S\setminus\{t\}$ for some $t\in S$ with $q_t=1$.
Let $(J,\mu)\in\Irr(\H_R(\bq))$. Then
\[ \bP_{J,\mu}^R \uparrow\,_{\H_R(\bq)}^{\H_S(\bq)} \cong 
\bigoplus_{ \lambda\in\Irr(\CC W^1):\ J\subseteq S^{0,\lambda} } \left( \bP_{J,\lambda}^S \right)^{\oplus c_\mu^\lambda}, \]
\[ \bC_{J,\mu}^R \uparrow\,_{\H_R(\bq)}^{\H_S(\bq)} \cong 
\bigoplus_{ \lambda\in\Irr(\CC W^1):\ J\subseteq S^{0,\lambda} } \left( \bC_{J,\lambda}^S \right)^{\oplus c_\mu^\lambda} \]
where each summand $\bP_{J,\lambda}^S$ [or $\bC_{J,\lambda}^S$, resp.] with multiplicity $c_\mu^\lambda\ne0$ is a projective indecomposable [or simple, resp.] $\H_S(\bq)$-module indexed by $(J,\lambda)\in\Irr(\H_S(\bq))$.
\end{proposition}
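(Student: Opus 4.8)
The plan is to exploit that $q_t=1$ puts $t$ in $S^1$, so passing from $R=S\setminus\{t\}$ to $S$ leaves the $0$-Hecke factor unchanged ($R^0=S^0$) and only enlarges the group-algebra factor from $\CC\langle R^1\rangle$ to $\CC W^1$, where $\langle R^1\rangle$ is the parabolic subgroup of $W^1$ obtained by deleting $t$. Invoking Lemma~\ref{lem:PS} I would first record $\bP_{J,\mu}^R\cong\bP_J^{R^{0,\mu}}\otimes\bS_\mu$ and $\bC_{J,\mu}^R\cong\bC_J^{R^{0,\mu}}\otimes\bS_\mu$, and then compute the induction by pushing the shared $0$-Hecke factor through unchanged while inducing the group factor along $\langle R^1\rangle\hookrightarrow W^1$ via the classical branching rule~\eqref{eq:IndResG}.

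Concretely, using $R^0=S^0$ and Lemma~\ref{lem:PS} to write $\bP_{J,\mu}^R=\bP_J^{S^0}\,\bS_\mu$, the presentation~\eqref{eq:Hq} of $\H_S(\bq)$ together with the common subalgebra $\H^0(\bq)\cong\H_{S^0}(0)$ should identify $\H_S(\bq)\otimes_{\H_R(\bq)}\bigl(\bP_J^{S^0}\,\bS_\mu\bigr)$ with $\bP_J^{S^0}$ tensored with $\bS_\mu\uparrow\,_{\langle R^1\rangle}^{W^1}$. Expanding the latter by~\eqref{eq:IndResG} gives
\[ \bP_{J,\mu}^R\uparrow\,_{\H_R(\bq)}^{\H_S(\bq)}\cong\bigoplus_{\lambda\in\Irr(\CC W^1)}\left(\bP_J^{S^0}\,\bS_\lambda\right)^{\oplus c_\mu^\lambda}, \]
and similarly with $\bC$ in place of $\bP$. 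For each $\lambda$ the summand $\bP_J^{S^0}\,\bS_\lambda$ is by definition $\bP_{J,\lambda}^S$, which Lemma~\ref{lem:PS} identifies with the projective indecomposable module $\bP_J^{S^{0,\lambda}}\otimes\bS_\lambda$ when $J\subseteq S^{0,\lambda}$ and shows to vanish otherwise. Discarding the vanishing terms leaves exactly the sum over $\{\lambda:J\subseteq S^{0,\lambda}\}$ in the statement, each surviving $\bP_{J,\lambda}^S$ being projective indecomposable by Theorem~\ref{thm:decomp}; for the simple modules the semisimplicity of $\CC W^1$ ensures that $\bS_\mu\uparrow\,_{\langle R^1\rangle}^{W^1}$, hence the whole induced module, is a genuine direct sum of the simples $\bC_{J,\lambda}^S$. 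Lemma~\ref{lem:RS} is what keeps the bookkeeping coherent: $c_\mu^\lambda\neq0$ forces $S^{0,\lambda}\subseteq R^{0,\mu}$, so as $\mu$ spreads into $\lambda$ the effective $0$-Hecke support contracts from $R^{0,\mu}$ to $S^{0,\lambda}$, i.e.\ only more generators act by zero, never fewer.

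The step I expect to be the main obstacle is justifying rigorously that the induction factors as (identity on the shared $0$-Hecke factor) $\otimes$ (group induction): the coupling relations $T_sT_t=T_s$ (for $s\in S^0$, $t\in S^1$, $m_{st}=3$) genuinely entangle the two factors, so this factorization is not a formal consequence of any tensor-product-of-algebras decomposition. I would establish it by choosing, from the basis of Theorem~\ref{thm:basis}, a set of representatives for the cosets of $\langle R^1\rangle$ in $W^1$ and checking directly that left multiplication by these representatives presents $\H_S(\bq)$ as the expected right $\H_R(\bq)$-module, the coupling relations producing precisely the contraction of the $0$-Hecke support described above. As a consistency check I would also prove the projective formula independently: induction preserves projectivity, so the left-hand side is automatically a sum of modules $\bP_{I,\lambda}^S$, and the multiplicities can be recovered by computing its top or by transporting the two-sided $0$-Hecke duality~\eqref{eq:H0Duality} through the tensor decomposition of Theorem~\ref{thm:decomp}.
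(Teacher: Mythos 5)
Your proposal is correct and follows essentially the same route as the paper: reduce to the classical branching rule~\eqref{eq:IndResG} for $\bS_\mu\uparrow_{W_{R^1}}^{W_{S^1}}$ while carrying the $0$-Hecke factor along (using $R^0=S^0$), then use Lemma~\ref{lem:RS} and Lemma~\ref{lem:PS} to identify each surviving summand with $\bP_{J,\lambda}^S$ or $\bC_{J,\lambda}^S$ and to discard the $\lambda$ with $J\not\subseteq S^{0,\lambda}$. The only difference is that you explicitly flag and propose to verify (via coset representatives from the basis of Theorem~\ref{thm:basis}) the factorization of the induction through the coupling relations, a step the paper passes over by citing the structure~\eqref{eq:Hq}; this is a reasonable tightening rather than a different argument.
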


\begin{proof}
By the structure~\eqref{eq:Hq} of the algebra $\H_S(\bq)$, we have
\[ \bP_{J,\mu}^R \uparrow\,_{\H_R(\bq)}^{\H_S(\bq)} \cong
\bP_J^{R^{0,\mu}} \otimes \left( \bS_\mu \uparrow\,_{W_{R^1}}^{W_{S^1}} \right) \qand 
\bC_{J,\mu}^R \uparrow\,_{\H_R(\bq)}^{\H_S(\bq)} \cong
\bC_J^{R^{0,\mu}} \otimes \left( \bS_\mu \uparrow\,_{W_{R^1}}^{W_{S^1}} \right). \]
By the induction formula~\eqref{eq:IndResG}, we have
\[ \bS_\mu \uparrow\,_{W_{R^1}}^{W_{S^1}} \cong \bigoplus_{\lambda\in\Irr(\CC W^1)} \bS_\lambda^{\oplus c_\mu^\lambda}. \]
Let $\lambda\in\Irr(\CC W^1)$ with $c_\mu^\lambda\ne0$.
Then $S^{0,\lambda}\subseteq R^{0,\mu}$ by Lemma~\ref{lem:RS}.
Using a similar argument to the proof of Lemma~\ref{lem:PS}, one sees that $\bP_J^{R^{0,\mu}}\otimes \bS_\lambda=0$ and $\bC_J^{R^{0,\mu}}\otimes \bS_\lambda=0$ if $J\not\subseteq S^{0,\lambda}$, or $\bP_J^{R^{0,\mu}}\otimes \bS_\lambda \cong \bP_{J,\lambda}^S$ and $\bC_J^{R^{0,\mu}}\otimes \bS_\lambda \cong \bC_{J,\lambda}^S$ with $(J,\lambda)\in\Irr(\H_S(\bq))$ if $J\subseteq S^{0,\lambda}$.
\end{proof}

\begin{example}
Let $\bq=(0^3,1^2,0^1)$, $\bq_1=(0^3,1^1)$, and $\bq_2=(1^0,0^1)$.
By the Littlewood-Richardson Rule, $c^{[3]}_{[2],[1]} = c^{[2,1]}_{[2],[1]} = 1$ and $c^\lambda_{[2],[1]}=0$ for any partition $\lambda$ different from $[3]$ and $[2,1]$. Thus
\begin{eqnarray*}
\left( \bP_{(3,1), [2]} \otimes \bP_{[1], (2)} \right) \uparrow\,_{\H(\bq_1)\otimes\H(\bq_2)}^{\H(\bq)} 
&\cong& \bP_{(3,1),[3],(2)} \qand \\
\left( \bP_{(1,3), [2]} \otimes \bP_{[1], (2)} \right) \uparrow\,_{\H(\bq_1)\otimes\H(\bq_2)}^{\H(\bq)} 
&\cong& \bP_{(1,3),[3],(2)} \oplus \bP_{(1,2),[2,1],(1)}.
\end{eqnarray*}
The same result holds if $\bP$ is replaced with $\bC$.
\end{example}

Next, we study restriction of $\H_S(\bq)$-modules to $\H_R(\bq)$.

\begin{proposition}\label{prop:Res2}
Suppose $R=S\setminus\{t\}$ for some $t\in S$ with $q_t=1$.
Let $(I,\lambda)\in\Irr(\H_S(\bq))$. Then
\[ \bC_{I,\lambda}^S \downarrow\,_{\H_R(\bq)}^{\H_S(\bq)} \cong 
\bigoplus_{\mu\in\Irr(\CC W_{R^1})} \left( \bC_{I,\mu}^R \right)^{\oplus c_\mu^\lambda} \]
where each summand $\bC_{I,\mu}^R$ with multiplicity $c_\mu^\lambda\ne0$ is a simple $\H_R(\bq)$-module indexed by $(I,\mu)\in\Irr(\H_R(\bq))$, and
\[ \bP_{I,\lambda}^S \downarrow\,_{\H_R(\bq)}^{\H_S(\bq)} \cong 
\bigoplus_{\mu\in\Irr(\CC W_{R^1})} \bQ_{I,S^{0,\lambda}}^{R^{0,\mu}} \otimes \bS_\mu^{\oplus c_\mu^\lambda} \]
where each summand $\bQ_{I,S^{0,\lambda}}^{R^{0,\lambda}} \otimes \bS_\mu$ with multiplicity $c_\mu^\lambda\ne0$ is an indecomposable $\H_R(\bq)$-module with top isomorphic to the simple $\H_R(\bq)$-module $\bC_{I,\mu}^R$ and is projective if and only if $S^{0,\lambda} = R^{0,\mu}$.
(See Section~\ref{sec:H0} for the definition of the $0$-Hecke module $\bQ_{I,S^{0,\lambda}}^{R^{0,\lambda}}$.)
\end{proposition}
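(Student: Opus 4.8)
The plan is to mirror the proofs of Theorem~\ref{thm:decomp} and Proposition~\ref{prop:Res1}, exploiting the factorization \eqref{eq:Hq} of $\H_S(\bq)$. Since $R=S\setminus\{t\}$ with $q_t=1$, the parameter-zero part is unchanged ($R^0=S^0$) and only the semisimple factor shrinks, from $\CC W^1$ to $\CC W_{R^1}$ with $R^1=S^1\setminus\{t\}$. Starting from $\bC_{I,\lambda}^S\cong \bC_I^{S^{0,\lambda}}\otimes\bS_\lambda$ and $\bP_{I,\lambda}^S\cong \bP_I^{S^{0,\lambda}}\otimes\bS_\lambda$ (Lemma~\ref{lem:PS}), I would first observe that restriction touches only the $\bS_\lambda$ factor. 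Because $\H^0(\bq)$ and $\H^1(\bq)$ commute and every $\H^0$-generator $T_s$ acts on $\bP_I^{S^{0,\lambda}}\otimes\bS_\lambda$ as $A_s\otimes\mathrm{id}_{\bS_\lambda}$ (with $A_s=\pi_s$ for $s\in S^{0,\lambda}$ and $A_s=0$ otherwise) while the $\H^1$-generators act only on the $\bS_\lambda$ factor, the branching $\bS_\lambda\downarrow_{W_{R^1}}^{W^1}\cong\bigoplus_\mu\bS_\mu^{\oplus c_\mu^\lambda}$ from \eqref{eq:IndResG} is respected by the whole $\H_R(\bq)$-action, giving
\[
\bC_{I,\lambda}^S\downarrow \cong \bigoplus_\mu \bigl(\bC_I^{S^{0,\lambda}}\otimes\bS_\mu\bigr)^{\oplus c_\mu^\lambda}, \qquad
\bP_{I,\lambda}^S\downarrow \cong \bigoplus_\mu \bigl(\bP_I^{S^{0,\lambda}}\otimes\bS_\mu\bigr)^{\oplus c_\mu^\lambda}.
\]

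Next I would identify the summands. For each $\mu$ with $c_\mu^\lambda\ne0$, Lemma~\ref{lem:RS} gives $S^{0,\lambda}\subseteq R^{0,\mu}$, so $I\subseteq S^{0,\lambda}\subseteq R^{0,\mu}$ and $(I,\mu)\in\Irr(\H_R(\bq))$. In the simple case a direct check of generator actions---$T_s$ acts by $1$ for $s\in I$ and by $0$ for every other $s\in S^0$, on both $\bC_I^{S^{0,\lambda}}\otimes\bS_\mu$ and $\bC_I^{R^{0,\mu}}\otimes\bS_\mu$, using $S^{0,\lambda}\subseteq R^{0,\mu}$---yields $\bC_I^{S^{0,\lambda}}\otimes\bS_\mu\cong\bC_{I,\mu}^R$, proving the first formula. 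For the projective case the key point is that the generators $T_s$ with $s\in R^{0,\mu}\setminus S^{0,\lambda}$, which lie in $S^0\setminus S^{0,\lambda}$, act by zero on $\bP_{I,\lambda}^S$ and hence on its restriction; this is precisely the defining property of the quotient $\bQ_{I,S^{0,\lambda}}^{R^{0,\mu}}$, which by Lemma~\ref{lem:QIJ} is isomorphic to $\bP_I^{S^{0,\lambda}}$ as an $\H_{S^{0,\lambda}}(0)$-module with $\pi_s$ acting by $0$ for all $s\in R^{0,\mu}\setminus S^{0,\lambda}$. Comparing actions term by term then gives $\bP_I^{S^{0,\lambda}}\otimes\bS_\mu\cong\bQ_{I,S^{0,\lambda}}^{R^{0,\mu}}\otimes\bS_\mu$ as $\H_R(\bq)$-modules.

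Finally, to establish the stated properties of each summand, I would reuse the device from Theorem~\ref{thm:decomp}: splitting $\bS_\mu=\bS_\mu^t\otimes\bS_\mu^n$ and viewing the module over $\H_{R^{0,\mu}}(0)\otimes\bigotimes_{j\in L_1^\mu}\CC W_j$, Proposition~\ref{prop:tensor} shows its top equals $\top\bigl(\bQ_{I,S^{0,\lambda}}^{R^{0,\mu}}\bigr)\otimes\bS_\mu\cong \bC_I^{R^{0,\mu}}\otimes\bS_\mu=\bC_{I,\mu}^R$, which is simple, so the module is indecomposable; Proposition~\ref{prop:induce} transfers indecomposability and the top to the genuine $\H_R(\bq)$-structure. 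For projectivity, a module with simple top $\bC_{I,\mu}^R$ is projective if and only if it equals the projective cover $\bP_{I,\mu}^R\cong\bP_I^{R^{0,\mu}}\otimes\bS_\mu$. When $S^{0,\lambda}=R^{0,\mu}$ one has $\bN_{I,S^{0,\lambda}}^{R^{0,\mu}}=0$, so $\bQ_{I,S^{0,\lambda}}^{R^{0,\mu}}=\bP_I^{R^{0,\mu}}$ and the summand is projective; when $S^{0,\lambda}\subsetneq R^{0,\mu}$ the quotient is proper, forcing a strictly smaller dimension and hence non-projectivity.

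The step I expect to be the main obstacle is the projective-case identification: one must verify carefully that the $\bS_\mu$-isotypic decomposition of the restriction is compatible with the $\H^0$-action (which holds precisely because that action has the form $A_s\otimes\mathrm{id}_{\bS_\lambda}$), and then match the inherited zero-actions of the ``freed'' generators $s\in R^{0,\mu}\setminus S^{0,\lambda}$ with the submodule $\bN_{I,S^{0,\lambda}}^{R^{0,\mu}}$ that defines $\bQ_{I,S^{0,\lambda}}^{R^{0,\mu}}$. Everything else reduces to the $0$-Hecke restriction formula \eqref{eq:H0Res}, the group branching \eqref{eq:IndResG}, and the tensor-product machinery already developed.
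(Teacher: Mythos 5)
Your argument is correct and takes essentially the same route as the paper's proof: Lemma~\ref{lem:PS} plus the branching rule~\eqref{eq:IndResG} to split the restriction into $\bS_\mu$-isotypic pieces, Lemma~\ref{lem:RS} to get $S^{0,\lambda}\subseteq R^{0,\mu}$, and Lemma~\ref{lem:QIJ} together with the tensor-product machinery (Propositions~\ref{prop:tensor}, \ref{prop:TensorP}, \ref{prop:induce}) to identify each summand with $\bQ_{I,S^{0,\lambda}}^{R^{0,\mu}}\otimes\bS_\mu$ and read off its top, indecomposability, and the projectivity criterion. The extra detail you supply --- the $A_s\otimes\mathrm{id}$ form of the $\H^0(\bq)$-action and the dimension comparison with the projective cover --- only makes explicit what the paper leaves implicit.
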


\begin{proof}
By Lemma~\ref{lem:PS}, we have
\[ \bP_{I,\lambda}^S  \cong \bP_I^{S^{0,\lambda}}\otimes \bS_\lambda \qand
\bC_{I,\lambda}^S  \cong \bC_I^{S^{0,\lambda}}\otimes \bS_\lambda \]
where $\pi_s$ acts by zero for all $s\in S\setminus S^{0,\lambda}$.
Applying the restriction formula~\eqref{eq:IndResG} gives
\[ \bP_{I,\lambda}^S \downarrow\,_{\H_R(\bq)}^{\H_S(\bq)} \cong 
\bP_I^{S^{0,\lambda}} \otimes \left( \bS_\lambda \downarrow\,_{\CC W_{R^1}}^{\CC W_{S^1}} \right) 
\cong \bigoplus_{\mu\in\Irr(\CC W_{R^1})} \bP_I^{S^{0,\lambda}} \otimes \bS_\mu^{\oplus c_\mu^\lambda}, \]
\[ \bC_{I,\lambda}^S \downarrow\,_{\H_R(\bq)}^{\H_S(\bq)} \cong 
\bC_I^{S^{0,\lambda}} \otimes \left( \bS_\lambda \downarrow\,_{\CC W_{R^1}}^{\CC W_{S^1}} \right) 
\cong \bigoplus_{\mu\in\Irr(\CC W_{R^1})} \bC_I^{S^{0,\lambda}} \otimes \bS_\mu^{\oplus c_\mu^\lambda}. \]
Let $\mu\in\Irr(\CC W_{R^1})$ with $c_\mu^\lambda\ne0$.
We have $S^{0,\lambda}\subseteq R^{0,\mu}$ by Lemma~\ref{lem:RS}.
Hence $(I,\mu)\in\Irr(\H_R(\bq))$ and
\[ \bC_I^{S^{0,\lambda}} \otimes \bS_\mu \cong \bC_I^{R^{0,\mu}} \otimes \bS_\mu = \bC_{I,\mu}^R. \]
Using Proposition~\ref{prop:TensorP} and Proposition~\ref{prop:induce} (i) one can show that $\bP_I^{S^{0,\lambda}} \otimes \bS_\mu$ is an indecomposable $\H_R(\bq)$-module. 
It follows from Lemma~\ref{lem:QIJ} (with $J=S^{0,\lambda}$ and $S=R^{0,\mu}$) that
\[ \bP_I^{S^{0,\lambda}} \otimes \bS_\mu \cong \bQ_{I,S^{0,\lambda}}^{R^{0,\mu}} \otimes \bS_\mu \qand
\top\left( \bQ_{I,S^{0,\lambda}}^{R^{0,\mu}} \otimes \bS_\mu \right) \cong
\bC_I^{R^{0,\mu}} \otimes \bS_\mu = \bC_{I,\mu}^R. \]
Thus $\bQ_{I,S^{0,\lambda}}^{R^{0,\mu}} \otimes \bS_\mu$ is projective if and only if it is isomorphic to $\bP_{I,\mu}^S$, which is equivalent to $S^{0,\lambda} = R^{0,\mu}$ by Lemma~\ref{lem:QIJ}.
\end{proof}

\begin{example}
Let $\bq=(0^3,1^2,0^1)$, $\bq_1=(0^3,1^1)$, and $\bq_2=(1^0,0^1)$.
By the Littlewood-Richardson Rule, we have $c^{[2,1]}_{[2],[1]} = c^{[2,1]}_{[1,1],[1]} = 1$ and $c^{[2,1]}_{\mu,\nu}=0$ for all $(\mu,\nu)\notin \{ ([2],[1]), ([1,1],[1]) \}$. Thus
\[ \bP_{(1,2),[2,1],(1)} \downarrow\,_{\H(\bq_1)\otimes\H(\bq_2)}^{\H(\bq)} 
\cong \left( \bP_{(1,2),[2]} \otimes \bP_{[1],(1)} \right) \oplus \left( \bP_{(1,2),[1,1]} \otimes \bP_{[1],(1)} \right). \]
Here $\bP_{(1,2),[2]}$ is isomorphic to $\bQ_{(1,3)}^{(3,1)}\otimes \bS_{[2]}$ (cf. Figure~\ref{fig:Q-module}), a nonprojective indecomposable $\H(\bq_1)$-module. 
On the other hand, $\bP_{(1,2),[1,1]}$ and $\bP_{[1],(1)}$ are projective indecomposable modules over $\H(\bq_1)$ and $\H(\bq_2)$, respectively.
We also have 
\[ \bC_{(1,2),[2,1],(1)} \downarrow\,_{\H(\bq_1)\otimes\H(\bq_2)}^{\H(\bq)} 
\cong \left( \bC_{(1,3),[2]} \otimes \bC_{[1],(2)} \right) \oplus \left( \bC_{(1,2),[1,1]} \otimes \bC_{[1],(2)} \right). \]
\end{example}

\begin{corollary}
Suppose $R=S\setminus\{t\}$ for some $t\in S$ with $q_t=1$.
If $(I,\lambda)\in \Irr(\H_S(\bq))$ and $(J,\mu)\in \Irr(\H_R(\bq))$ then
\[ \left\langle \bP_{I,\lambda}^S \downarrow\,_{\H_R(\bq)}^{\H_S(\bq)},\ \bC_{J,\mu}^R \right\rangle = 
\left\langle \bP_{I,\lambda}^S,\ \bC_{J,\mu}^R\uparrow\,_{\H_R(\bq)}^{\H_S(\bq)} \right\rangle, \]
\[ \left\langle \bP_{J,\mu}^R \uparrow\,_{\H_R(\bq)}^{\H_S(\bq)},\ \bC_{I,\lambda}^S \right\rangle = 
\left\langle \bP_{J,\mu}^R,\ \bC_{I,\lambda}^S\downarrow\,_{\H_R(\bq)}^{\H_S(\bq)} \right\rangle. \]
\end{corollary}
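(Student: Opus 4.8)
The plan is to mirror the proof of Corollary~\ref{cor:dual1}, replacing the $0$-Hecke duality~\eqref{eq:H0Duality} by the Frobenius reciprocity~\eqref{eq:IndResG} for the group algebra $\CC W^1$ and its parabolic subalgebra $\CC W_{R^1}$. Concretely, I would evaluate all four pairings appearing in the two displayed identities directly from the explicit decompositions given in Proposition~\ref{prop:Ind2} and Proposition~\ref{prop:Res2}, and show that every one of them collapses to the single quantity $c_\mu^\lambda\,\delta_{I,J}$. The main tool for each reduction is equation~\eqref{eq:hom}: a pairing $\langle M,\bC_{J,\mu}^R\rangle$ counts the multiplicity of $\bC_{J,\mu}^R$ in $\top(M)$, while a pairing $\langle \bP_{I,\lambda}^S,N\rangle$ with $\bP_{I,\lambda}^S$ projective indecomposable counts the multiplicity of its simple top $\bC_{I,\lambda}^S$ among the composition factors of $N$.

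First I would dispose of the three pairings that read off directly from a decomposition into simple modules or projective indecomposables. On the right of the first identity, Proposition~\ref{prop:Ind2} writes $\bC_{J,\mu}^R\uparrow\,_{\H_R(\bq)}^{\H_S(\bq)}$ as a direct sum of simples $\bC_{J,\rho}^S$ with multiplicities $c_\mu^\rho$, so $\langle \bP_{I,\lambda}^S,\bC_{J,\mu}^R\uparrow\rangle$ is $c_\mu^\lambda$ precisely when $J=I$ (forcing $\rho=\lambda$) and $0$ otherwise. On the right of the second identity, Proposition~\ref{prop:Res2} gives $\bC_{I,\lambda}^S\downarrow$ as a direct sum of simples $\bC_{I,\nu}^R$ with multiplicities $c_\nu^\lambda$, whence $\langle \bP_{J,\mu}^R,\bC_{I,\lambda}^S\downarrow\rangle=c_\mu^\lambda\,\delta_{I,J}$. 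On the left of the second identity, the projective induction of Proposition~\ref{prop:Ind2} is a direct sum of the projective indecomposables $\bP_{J,\rho}^S$, whose tops are $\bC_{J,\rho}^S$; since $(I,\lambda)\in\Irr(\H_S(\bq))$ already guarantees $I\subseteq S^{0,\lambda}$, the pairing $\langle \bP_{J,\mu}^R\uparrow,\bC_{I,\lambda}^S\rangle$ selects the summand with $(J,\rho)=(I,\lambda)$ and again equals $c_\mu^\lambda\,\delta_{I,J}$.

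The remaining pairing, $\langle \bP_{I,\lambda}^S\downarrow,\bC_{J,\mu}^R\rangle$ on the left of the first identity, is where I expect the one genuine subtlety to arise, and it is exactly the phenomenon flagged in the introduction: the restriction of a projective $\H_S(\bq)$-module need not be projective. Indeed Proposition~\ref{prop:Res2} expresses $\bP_{I,\lambda}^S\downarrow$ as a direct sum of the \emph{non-projective} modules $\bQ_{I,S^{0,\lambda}}^{R^{0,\nu}}\otimes\bS_\nu$, so one cannot simply read the pairing off a decomposition into simples. The key observation that rescues the computation is that each such summand has a \emph{simple} top $\bC_{I,\nu}^R$, again by Proposition~\ref{prop:Res2}; hence by~\eqref{eq:hom} the pairing against $\bC_{J,\mu}^R$ discards the rest of the composition series and contributes $\delta_{(I,\nu),(J,\mu)}$. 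Summing over $\nu$ with multiplicities $c_\nu^\lambda$ yields $c_\mu^\lambda\,\delta_{I,J}$ once more.

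Having shown that all four pairings equal $c_\mu^\lambda\,\delta_{I,J}$, both identities follow at once. The conceptual reason the two-sided duality survives here, despite restricted projectives failing to be projective, is that the group-algebra tensor factor $\CC W^1$ obeys the honest Frobenius reciprocity~\eqref{eq:IndResG}, while the $0$-Hecke tensor factor enters only through the simple tops of the $\bQ$-modules, which are respected by both induction and restriction. I would therefore present the argument as a short case-by-case evaluation closing with the common value $c_\mu^\lambda\,\delta_{I,J}$, rather than by invoking any formal adjunction, since the adjunction~\eqref{eq:dual2} is precisely what must be verified by hand in this setting.
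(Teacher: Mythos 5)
Your proposal is correct and follows essentially the same route as the paper: the paper's proof likewise reduces all four pairings to Propositions~\ref{prop:Ind2} and~\ref{prop:Res2} together with equation~\eqref{eq:hom}, and your observation that the non-projective summands $\bQ_{I,S^{0,\lambda}}^{R^{0,\mu}}\otimes\bS_\mu$ are harmless because~\eqref{eq:hom} only sees their simple tops is exactly the point that makes the argument go through. Your explicit common value $c_\mu^\lambda\,\delta_{I,J}$ for all four pairings is a correct and slightly more detailed account of what the paper leaves to the reader.
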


\begin{proof}
The result follows from Proposition~\ref{prop:Ind2}, Proposition~\ref{prop:Res2}, and the equations~\eqref{eq:hom} and \eqref{eq:H0Duality}.
\end{proof}

\section{Final remarks and questions}\label{sec:remark}

\subsection{Hecke algebras at roots of unity}
Let $\H_n(q)$ be a Hecke algebra of type $A_{n-1}$ over a field $\FF$ of characteristic zero with a single parameter $q\ne0$.
For each $\lambda\vdash n$, Dipper and James~\cite{DJ} constructed an $\H_n(q)$-module $\bS_\lambda(q)$, called the \emph{Specht module}, whose dimension equals the number $d_\lambda$ of standard Young tableaux of shape $\lambda$.
If $q$ is not zero or a root of unity then $\{\bS_\lambda(q):\lambda\vdash n\}$ is a complete set of non-isomorphic simple $\H_n(q)$-modules.
When $q$ is a primitive $k$th root of unity, Dipper and James~\cite{DJ} also constructed a complete set of simple $\H_n(q)$-modules $\mathbf{D}_\mu(q)$, where $\mu$ runs through all partitions of $n$ with at most $k-1$ rows of equal length.
However, these modules are not completely understood yet.

In this paper we study the (complex) representation theory of the Hecke algebra $\H_S(\bq)$ of a finite simply-laced Coxeter system $(W,S)$ with independent parameters $\bq\in\left( \CC\setminus\{\text{roots of unity}\} \right)^S$.
A natural question to ask is, whether our results can be extended to the case when the parameters are allowed to be roots of unity.


\subsection{Monoid algebra}
The Hecke algebra $\H_n(q)$ is a group algebra when $q=1$ or a monoid algebra when $q=0$.
The representation theory of finite groups is of course well known. 
The representation theory of finite monoids has also been widely studied; see, e.g., Steinberg~\cite{Monoid}.
In fact, the representation theory of $0$-Hecke algebras is a special case of the representation theory of $\mathcal J$-trivial monoids studied by Denton, Hivert, Schilling, and Thi\'ery~\cite{J}.

By Proposition~\ref{prop:q->1}, to study the Hecke algebra $\H_S(\bq)$ with $\bq\in\left( \CC\setminus\{\text{roots of unity}\} \right)^S$, we may assume $\bq\in\{0,1\}^S$, without loss of generality.
Then $\H_S(\bq)$ becomes a monoid algebra, although the underlying monoid is not $\J$-trivial (nor $\mathcal R$-trivial). 
Nevertheless, it may still be possible to recover our results via the representation theory of finite monoids and this is worth further investigation.

\subsection{The Grothendieck groups of type A Hecke algebras}
For $n\ge1$ we define
\[ G_0^n := \bigoplus_{\bq\in\{0,1\}^{n-1}} G_0(\H(\bq)) \qand
K_0^n := \bigoplus_{\bq\in\{0,1\}^{n-1}} K_0(\H(\bq)). \]
For $n=0$ we set $G_0^n := G_0(\CC)$ and $K_0^n:=K_0(\CC)$.
We can define algebra and coalgebra structures on the two Grothendieck groups 
\[ G_0 :=  \bigoplus_{n\ge0} G_0^n \qand K_0(\H) := \bigoplus_{n\ge0} K_0^n. \]

If $M$ is a (projective) $\H(\bq_1)$-module, where $\bq_1\in\{0,1\}^{m-1}$, and if $N$ is a (projective) $\H(\bq_2)$-module, where $\bq_2\in\{0,1\}^{n-1}$, then we define 
\[ M\htimes N := \left( M \otimes N \right) \uparrow\,_{\H(\bq_1)\otimes\H(\bq_2)}^{\H(\bq)} \]
where $\bq:=\bq_10\bq_2\in\{0,1\}^{m+n-1}$ is the concatenation of $\bq_1$, $0$, and $\bq_2$.
Also set $\bS_\emptyset\htimes N := N$ and $M\htimes \bS_\emptyset := M$, where $\bS_\emptyset$ for the unique simple $\CC$-module.

If $M$ is a (projective) $\H(\bq)$-module, where $\bq=(q_1,\ldots,q_{m-1})\in\{0,1\}^{m-1}$, then we define
\[\Delta M := \bS_\emptyset\otimes M + \sum_{i\in[m-1]:\, q_i=0} M \downarrow\,_{\H(\bq_{<i})\otimes\H(\bq_{>i})}^{\H(\bq)} + M\otimes\bS_\emptyset \]
where $\bq_{<i}:=(q_1,\ldots,q_{i-1})$ and $\bq_{>i}:=(q_{i+1},\ldots,q_{m-1})$.
 
\begin{proposition}\label{prop:GK}
With $\htimes$ and $\Delta$, the Grothendieck groups $G_0$ and $K_0$ become dual graded algebras and coalgebras.
\end{proposition}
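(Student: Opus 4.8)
The statement packages three claims: that $(G_0,\htimes)$ and $(K_0,\htimes)$ are associative graded algebras with unit $\bS_\emptyset$, that $(G_0,\Delta)$ and $(K_0,\Delta)$ are coassociative graded coalgebras, and that the pairing $\langle-,-\rangle$ exhibits $G_0$ and $K_0$ as graded duals of one another. The plan is to reduce each of these to the ``Case~1'' results of Section~\ref{sec:IndRes1}. This reduction is available because $\htimes$ inserts a single $0$ between the two parameter strings and $\Delta$ restricts \emph{only} at positions $i$ with $q_i=0$; thus both operations involve exclusively the induction and restriction between $\H_R(\bq)$ and $\H_S(\bq)$ with $R=S\setminus\{s\}$ and $q_s=0$.

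First I would settle well-definedness on the Grothendieck groups. Restriction along any algebra homomorphism is exact, so $\Delta$ descends to $G_0$; by Proposition~\ref{prop:Res1} restriction at a $0$-position sends projective indecomposables to direct sums of projective indecomposables, so $\Delta$ descends to $K_0$ as well (this is precisely why $\Delta$ is restricted to zeros, in contrast with the non-projective restrictions of Proposition~\ref{prop:Res2}). Dually, induction always preserves projectivity, so $\htimes$ descends to $K_0$, while for $G_0$ one checks that induction in Case~1 is exact (equivalently that $\H_S(\bq)$ is suitably free over $\H_R(\bq)$, via the basis of Theorem~\ref{thm:basis}). Associativity and coassociativity are then structural: writing $(M\htimes N)\htimes P$ and $M\htimes(N\htimes P)$ as iterated inductions, both equal $(M\otimes N\otimes P)\uparrow$ to $\H(\bq_1 0\bq_2 0\bq_3)$ by transitivity of induction in stages, since concatenation with separating zeros is associative; symmetrically, both $(\Delta\otimes\mathrm{id})\Delta$ and $(\mathrm{id}\otimes\Delta)\Delta$ expand by transitivity of restriction into the single sum over ordered pairs of distinct $0$-positions $a<b$ of the triple restriction of $M$ to $\H(\bq_{<a})\otimes\H(\bq_{a<\cdot<b})\otimes\H(\bq_{>b})$, so they coincide. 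The boundary terms $\bS_\emptyset\otimes M$ and $M\otimes\bS_\emptyset$ supply the counit, and $\bS_\emptyset$ is the unit by definition.

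The duality carries the real content, but Corollary~\ref{cor:dual1} has already isolated it. The pairing vanishes between classes attached to different parameter strings, so for projective $P_1,P_2$ over $\H(\bq_1),\H(\bq_2)$ and a simple $C$ over $\H(\bq)$, the quantity $\langle P_1\htimes P_2, C\rangle$ is zero unless $\bq=\bq_1 0\bq_2$, in which case the inserted $0$ is a generator $s$ with $R=S\setminus\{s\}$, and the first identity of Corollary~\ref{cor:dual1} gives $\langle P_1\htimes P_2, C\rangle=\langle P_1\otimes P_2,\ C\downarrow\,_{\H_R(\bq)}^{\H_S(\bq)}\rangle$. Among the summands of $\Delta C$, only the one coming from restriction at this particular position $s$ lies over the pair $(\bq_1,\bq_2)$, while every other summand pairs to zero with $P_1\otimes P_2$; hence $\langle P_1\htimes P_2, C\rangle=\langle P_1\otimes P_2,\Delta C\rangle$, which says that the product on $K_0$ is adjoint to the coproduct on $G_0$. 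The reverse adjunction $\langle P, C_1\htimes C_2\rangle=\langle \Delta P,\ C_1\otimes C_2\rangle$ is obtained identically from the second identity of Corollary~\ref{cor:dual1}, and these two adjunctions are exactly the assertion that $G_0$ and $K_0$ are graded duals with respect to $\htimes$ and $\Delta$.

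The main obstacle I anticipate is not the duality—Corollary~\ref{cor:dual1} already handles it—but the bookkeeping that makes the grading and the term-matching airtight: one must verify that $\htimes$ genuinely lands in degree $m+n$ and $\Delta$ in $\bigoplus_{a+b=m}G_0^a\otimes G_0^b$, and, in the duality step, that the pairing selects precisely the single coproduct summand attached to the inserted zero. A secondary technical point is the exactness of induction in Case~1 needed for $\htimes$ on $G_0$; should direct freeness be awkward, one can instead \emph{define} the product on $G_0$ through the additive formula of Proposition~\ref{prop:Ind1} and verify associativity on the basis of simple modules, which sidesteps exactness entirely.
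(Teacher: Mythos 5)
Your proposal is correct and follows essentially the same route as the paper: well-definedness of $\htimes$ and $\Delta$ on both Grothendieck groups is reduced to the Case~1 induction/restriction formulas (Propositions~\ref{prop:Ind1} and~\ref{prop:Res1}) together with a Bergeron--Li-type projectivity/exactness argument, and the duality is read off from Corollary~\ref{cor:dual1}. The only notable difference is in how exactness of induction is secured: the paper shows $\H(\bq)$ is right projective over $\H(\bq_1)\otimes\H(\bq_2)$ by combining left projectivity (from Theorem~\ref{thm:decomp} and Proposition~\ref{prop:Res1}) with the anti-automorphism $T_{s_1}\cdots T_{s_k}\mapsto T_{s_k}\cdots T_{s_1}$, whereas you propose a direct freeness check from the basis of Theorem~\ref{thm:basis} (or the fallback of defining the product on $G_0$ by the formula of Proposition~\ref{prop:Ind1}); either works.
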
 

\begin{proof}
Let $\bq_1\in\{0,1\}^{m-1}$, $\bq_2\in\{0,1\}^{n-1}$, and $\bq=\bq_10\bq_2$.
The decomposition of $\H(\bq)$ given by Theorem~\ref{thm:decomp} and the restriction formulas for projective indecomposable modules given by Proposition~\ref{prop:Res1} imply that $\H(\bq)$ is a left projective module over $\H(\bq_1)\otimes\H(\bq_2)$.
One sees that $\H_S(\bq)$ is isomorphic to its opposite algebra $\H_S(\bq)^{\op}$
by sending $T_{s_1}\cdots T_{s_k}$ to $T_{s_k}\cdots T_{s_1}$ for all $s_1,\ldots,s_k\in S$.
Thus $\H(\bq)$ is also a right projective module over $\H(\bq_1)\otimes\H(\bq_2)$.
Then using a similar argument as Bergeron and Li~\cite[\S3]{BergeronLi} we can show that $\htimes$ and $\Delta$ are well-defined product and coproduct for $G_0$ and $K_0$. 
The duality follows from Corollary~\ref{cor:dual1}.
\end{proof}

One can check that $\Delta \bC_{(1),[2]} \htimes \Delta \bC_{(2)}$ contains the term 
\[ \left( \bS_\emptyset\otimes \bC_{(1),[2]} \right) \htimes \left( \bC_{(2)}\otimes \bS_\emptyset\right) = \bC_{(2)}\otimes \bC_{(1),[2]}.\]
But by Proposition~\ref{prop:Ind1} and Proposition~\ref{prop:Res1}, this term does not appear in
\[ \Delta\left( \bC_{(1),[2]} \htimes \bC_{(2)} \right)
= \Delta\left( \bC_{(1),[2],(1)} \htimes \bC_{(2)} \right) 
=  \Delta \left( \bC_{(1),[2],(3)} + \bC_{(1),[2],(2,1)} + \bC_{(1),[2],(1,2)} \right). \]
Thus $G_0$ is not a bialgebra.
By duality, $K_0$ is not a bialgebra either.

Although $G_0$ and $K_0$ are not bialgebras, they still have well-defined antipode maps since they are simultaneously an algebra and a coalgebra~\cite[\S7.3]{Hecke}.
It would be interesting to see whether the antipodes of $G_0$ and $K_0$ have simple formulas analogous to the antipode formulas for the Hopf algebras $G_0(\CC\SS_*)$, $G_0(\H_*(0))$, and $K_0(\H_*(0))$.
In addition, as these Hopf algebras correspond to $\Sym$, $\QSym$, and $\NSym$, one may try to find similar correspondences from $G_0$ and $K_0$ to some generalizations of symmetric functions.

\section*{acknowledgements}
We thank the anonymous referee for helpful suggestions and comments on this paper.




\end{document}